\documentclass[12pt]{amsart}
\usepackage{amsmath,amsthm,amssymb}
\usepackage[pdftex]{graphicx}
\usepackage{tikz}
\usepackage{comment}

\theoremstyle{definition}
\newtheorem{dfn}{Definition}[section]
\newtheorem{ex}[dfn]{Example}

\newtheorem{rem}[dfn]{Remark}

\newtheorem*{acknowledgement}{Acknowledgement}

\theoremstyle{plain}
\newtheorem{thm}[dfn]{Theorem}
\newtheorem{prop}[dfn]{Proposition}

\newtheorem{lem}[dfn]{Lemma}

\title[Cluster-cyclic conditions]{Cluster-cyclic condition of skew-symmetrizable matrices of rank 3 via the Markov constant}
\author{Ryota Akagi}
\email{ryota.akagi.e6@math.nagoya-u.ac.jp}
\begin{document}
\maketitle
\begin{abstract}
In this paper, we consider mutations of skew-symmetrizable matrices of rank 3. Every skew-symmetrizable matrix corresponds to a weighted quiver, and we study the conditions when this quiver is always cyclic after applying mutations. In this study, the Markov constant has an essential meaning. It has already appeared in some previous works for skew-symmetric matrices.
\end{abstract}

\section{Introduction}
\subsection{Background}
The cluster algebra theorey was introduced by Fomin and Zelevinsky in \cite{FZ02}. The main objects are {\em mutations}, which are transformations of matrices and rational fractions called cluster variables.
In this paper, we consider mutations of skew-symmetrizable matrices of rank 3.
The matrix $B \in \mathrm{M}_3(\mathbb{R})$ is called {\em skew-symmetrizable} if there exists a diagonal matrix $D$ with positive diagonal entries such that $DB$ is skew-symmetric. Then, it was shown in \cite{FZ03a} that $B$ may be expressed as
\begin{equation}\label{eq: matrix}
B=\left(\begin{matrix}
0 & -z' & y\\
z & 0 & -x'\\
-y' & x & 0
\end{matrix}\right)
\end{equation}
where $x,y,z,x',y',z' \in \mathbb{R}$ satisfy $xyz=x'y'z'$. In the usual cluster algebra theory, we often suppose that all entries of $B$ are integral, since entries of $B$ appear in the exponent of rational fractions. However, since we do not focus on cluster variables, we may consider real skew-symmetrizable matrices. When we focus on only matrix mutations $\mu_k(B)$, we can naturally extend the definition, and their properties are helpful for the study of cluster algebras. This generalization for real entries have already appeared in \cite{BBH11, FT19, FT23} for the skew-symmetric case.
\par
The rules of mutations depend on the signs of entries in the matrix. If all $x,x',y,y',z$, and $z'$ are either positive or negative, then $B$ is called {\em cyclic}. Otherwise, $B$ is called {\em acyclic}. This word comes from the definitions in the corresponding quivers $Q(B)$. (See Figure~\ref{fig: positive-cyclic} and Figure~\ref{fig: negative-cyclic}.)  If every matrix obtained by applying mutations to $B$ is cyclic, then $B$ is called {\em cluster-cyclic}, and otherwise, we call $B$ {\em cluster-acyclic}. Our purpus is to identify whether $B$ is cluster-cyclic or not.

\subsection{Previous works}
In the skew-symmetric case, this problem was solved in \cite{BBH11,FT19}. In \cite{BBH11}, they showed it for the integer skew-symmetric case, and in \cite{FT19}, they generalized it for real skew-symmetric case by using the geometric aspect of mutations. They showed that the {\em Markov constants}, which was introduced in \cite{BBH11} for the cluster algebra theory, play important roles in this problem. For the skew-symmetrizable case, the Markov constants $C(B)=xx'+yy'+zz'-|xyz|$, which are a generalization of the one for skew-symmetric matrices, also play important roles in this problem. For the skew-symmetrizable case, this problem was solved by Seven \cite{Sev12} by using a {\em quasi-Cartan companion}.
\par
Other motivation of this study is in the classification of the global structures of $G$-fans \cite{Nak24}.
\begin{rem}
After the author submitted this paper, I was informed by Ahmet Seven of the related works in \cite{Sev12,Sev13}. By using results in \cite{Sev12}, Theorem~\ref{thm: 1} can be shown. A related result to Theorem~\ref{thm: fundamental domain} appeared in \cite{Sev13}. However, to obtain other results, we necessitate a different proof that draws upon certain facts from the proof of Theorem~\ref{thm: 1}. So, we give an alternative proof of Theorem~\ref{thm: 1} and Theorem~\ref{thm: fundamental domain} in this paper.
\end{rem}

\subsection{Method}
To study the cluster-cyclicity, we combine the method of \cite{FZ03a} and \cite{BBH11}. In \cite{FZ03a}, there is a map from skew-symmetrizable matrices to skew-symmetric matrices, which we write $\mathrm{Sk}(B)$ in this paper, such that this operation $\mathrm{Sk}$ and mutations $\mu_k$ are commutative. (See Proposition~\ref{prop: fundamental property of DSSS}.) By using this correspondence, some methods for skew-symmetric matrices in \cite{BBH11} work well for skew-symmetrizable matrices. By translating the statements of skew-symmetrizable matrices into the ones of skew-symmetric matrices, some results have already been shown in \cite{FT19}. In particular, Theorem~\ref{thm: 1} below is immediately shown. However, since we use the method of the proof in \cite{BBH11} in Section~\ref{sec: the case (B)} and Section~\ref{sec: integer skew-symmetrizable case}, we give another proof which is similar to the method of \cite{BBH11}.
\par
However, by considering skew-symetrizable matrices, we need some reconstructions. One of the main reasons is that the surjectivity of Markov constants. (See Theorem~\ref{thm: surjectivity}, Example~\ref{ex: examples of 0<C<4}.) In \cite{BBH11}, it was shown that there are no integer skew-symmetric and cluster-cyclic matrices with $0<C(B)<4$. This property simplifies certain arguments compared to the skew-symmetrizable case. However, due to this difference, we require some modifications to its proof.
\subsection{Main results}
First, we mention the identification of cluster-cyclicity. It is a natural generalization of \cite{BBH11} for the skew-symmetrizable and real case. The following result has already obtained in \cite{Sev12} essentially by using a quasi-Cartan companion.
\begin{thm}[cf.~{\cite[Thm.~1.2~(1),(2)]{BBH11}, \cite[Thm.~2.6]{Sev12}}]\label{thm: 1}
Let $B$ be a skew-symmetrizable and cyclic matrix of the form (\ref{eq: matrix}).
Define its {\em Markov constant}
\begin{equation}
C(B)=xx'+yy'+zz'-|xyz|.
\end{equation}
Then, the following two conditions are equivalent.
\begin{itemize}
\item[1.] $B$ is cluster-cyclic. 
\item[2.] It holds that $C(B)\leq 4$ and $xx',yy',zz' \geq 4$.
\end{itemize}
\end{thm}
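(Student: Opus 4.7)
The plan is to mirror the strategy of \cite{BBH11} directly in the skew-symmetrizable setting, using the correspondence $\mathrm{Sk}$ of Proposition~\ref{prop: fundamental property of DSSS} as a guide to check that the translated arguments still go through. As a preliminary step, I would first record the explicit form of $\mu_k(B)$ for a cyclic $B$ as in (\ref{eq: matrix}) and derive two consequences: (i) the three products $xx'$, $yy'$, $zz'$ transform in a controlled way under $\mu_k$, namely the product along the mutated vertex is preserved while the other two are adjusted by a Markov-type identity, and (ii) the quantity $C(B)$ is invariant under mutation whenever both $B$ and $\mu_k(B)$ remain cyclic. Both facts reduce to direct computations using the mutation formula together with the skew-symmetrizability relation $xyz=x'y'z'$.

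For the direction $(2)\Rightarrow(1)$, I would fix an arbitrary index $k$ and argue that, under the hypothesis $C(B)\leq 4$ and $xx',yy',zz'\geq 4$, the matrix $\mu_k(B)$ is again cyclic and continues to satisfy condition~(2). Cyclicity is a sign analysis of the new off-diagonal entries produced by $\mu_k$, made possible precisely by the inequalities $xx'\geq 4$, etc., and condition~(2) is then preserved thanks to the invariance of $C$ recorded in the preliminary step together with the transformation rule for the products. Iterating this over arbitrary mutation sequences yields cluster-cyclicity.

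The direction $(1)\Rightarrow(2)$, proved contrapositively, carries the bulk of the work. If one of the three products, say $xx'$, is strictly less than $4$, then a single mutation at an endpoint of the corresponding edge produces a sign change and hence an acyclic matrix, so $B$ is already not cluster-cyclic. The genuinely difficult subcase is when all three products are at least $4$ but $C(B)>4$. Here, following \cite{BBH11}, I would introduce a size function on cyclic skew-symmetrizable matrices (for instance the maximum of the three products, or $|xyz|$) and show that some mutation strictly decreases this size while keeping the matrix cyclic and preserving $C$. An induction on the size will then either eventually drive one of the products below $4$, reducing to the previous subcase, or directly produce an acyclic matrix. The main obstacle is identifying which vertex to mutate at in order to guarantee a strict descent, and verifying that such a vertex always exists when $C(B)>4$; the skew-symmetrizable constraint $xyz=x'y'z'$ couples the three products more rigidly than in the skew-symmetric case, so the descent argument of \cite{BBH11} needs to be recalibrated using this identity at each step.
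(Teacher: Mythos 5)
Your direction $(2)\Rightarrow(1)$ is sound and matches the paper's route: after passing to $\mathrm{Sk}(B)=(p,q,r)$ with $p=\sqrt{xx'}$ etc., the paper's Lemma~\ref{lem: p,q,r >=2} is exactly the completing-the-square computation showing that $C(S)\le 4$ and $q,r\ge 2$ force the new entry $qr-p$ to stay $\ge 2$, and one iterates. The problems are in your contrapositive direction.

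First, the claim that if some product, say $xx'<4$, then \emph{a single} mutation already produces an acyclic matrix is false. In the $\mathcal{S}$-picture take $(p,q,r)=(1.9,10,10)$, i.e.\ $xx'=3.61<4$ and $yy'=zz'=100$: all three of $qr-p=98.1$, $pr-q=9$, $pq-r=9$ are positive, so every single mutation keeps the matrix cyclic. What is true (Proposition~\ref{prop: r<2}) is that iterating the \emph{alternating} mutations $\gamma_1,\gamma_2$ eventually produces a non-positive entry; the proof writes $p=2\cos\theta$ and expresses the orbit entries via Chebyshev polynomials of the second kind, so that some $f_n(S)=q\sin(n+1)\theta/\sin\theta-p\sin n\theta/\sin\theta$ becomes $\le 0$. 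The number of mutations needed grows as $\theta\to 0$ (i.e.\ as $xx'\to 4^{-}$), so no bounded-length argument can work; this Chebyshev/trigonometric step is a genuinely necessary ingredient that your outline is missing.

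Second, your descent on a real-valued size function for the subcase ``all products $\ge 4$ but $C>4$'' cannot be closed by ``induction on the size'': over $\mathbb{R}$ (and even for integer skew-symmetrizable matrices, where the relevant quantities are $\sqrt{xx'}$, not integers) a strictly decreasing sequence of sizes need not terminate, and infinite strictly decreasing $\Gamma$-orbits really occur (the paper's case (B); they converge to $(2,2,2)$). The paper's resolution is structural rather than inductive: it classifies each orbit element by the conditions (M1)/(M2)/(M3) of Definition~\ref{dfn: Mk conditions}, shows (M3) forces an entry $<2$ (hence non-cluster-cyclicity via the Chebyshev lemma), shows an (M1) element is a minimum of its orbit and satisfies $C\le 4$ by a direct two-variable optimization (Lemma~\ref{lem: 2.1.2}), and handles the remaining all-(M2) orbits by proving the decreasing sequence converges to $(2,2,2)$, whence $C=4$ by continuity of $C$. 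You would need some substitute for this limit argument; as written, your induction has no base case to land on when the descent is infinite.
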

For some thechnical reasons, it is better to consider transformations $\gamma_k=-\mu_k$ than the usual mutations $\mu_k$ for any $k=1,2,3$. Let $\Gamma$ be a group generated by $\gamma_1$, $\gamma_2$, and $\gamma_3$. When $B$ of the form (\ref{eq: matrix}) satisfies $x,y,z,x',y',z' > 0$, $B$ is called {\em positive-cyclic}. By considering $\gamma_k$, if $B$ is positive and cluster-cyclic, then every element in this orbit $\Gamma(B)$ is also so.
\par
Next result is to give a fundamental domain for this action $\Gamma$ of {\em integer} skew-symmetrizable matrices. The following partial order $\leq$ on skew-symmetrizable and positive-cyclic matrices is useful in this study.
\begin{equation}\label{eq: order of matrices}
\begin{aligned}
&\ \left(\begin{matrix}
0 & -z' & y\\
z & 0 & -x'\\
-y' & x & 0
\end{matrix}\right)
\leq
\left(\begin{matrix}
0 & -w' & v\\
w & 0 & -u'\\
-v' & u & 0
\end{matrix}\right)
\\
\Longleftrightarrow&\ 
x \leq u,\ y \leq v,\ z \leq w,\ x' \leq u',\ y' \leq v',\ z' \leq w'.
\end{aligned}
\end{equation}
By using this order, we can give a fundamental domain for $\Gamma$, which is also a natural generalization of \cite[Thm.~5.1]{BBH11}.
\begin{thm}[Theorem~\ref{thm: fundamental domain}]\label{thm: fundamental domain of skew-symmetrizable}
A set $\tilde{F}$ defined by
\begin{equation}
\left\{\left.
B=\left(\begin{matrix}
0 & -z' & y\\
z & 0 & -x'\\
-y' & x & 0\\
\end{matrix}\right)\ 
\right|
\ \begin{aligned}
&\textup{$B$ is skew-symmetrizable},\\
&xyz \geq 2xx',2yy',2zz',\\
&x,y,z,x',y',z' \in \mathbb{Z}_{>0}
\end{aligned}
\right\}
\end{equation}
is a fundamental domain for $\Gamma$ on integer skew-symmetrizable, positive and cluster-cyclic matrices. Moreover, for any integer skew-symmetrizable, positive, and cluster-cyclic matrix $B$, the element of $\Gamma(B) \cap \tilde{F}$ is the smallest element in $\Gamma(B)$.
\end{thm}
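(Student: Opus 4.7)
The plan is a descent/induction argument on the partial order, combined with an explicit computation of each $\gamma_k$. First I would verify the formula
\[
\gamma_{1}:(x,y,z,x',y',z')\longmapsto(y'z'-x,\,y,\,z,\,yz-x',\,y',\,z'),
\]
with analogous expressions for $\gamma_2,\gamma_3$ obtained by cyclic relabelling, and then use the constraint $xyz=x'y'z'$ to derive the algebraic identity
\[
(y'z'-2x)(yz-2x')=\frac{(xyz-2xx')^{2}}{xx'}\ge 0.
\]
This forces the two differences $\tilde x-x$ and $\tilde x'-x'$ to have matching signs, so $\gamma_{1}(B)\ge B$ in the partial order iff $xyz\ge 2xx'$; cyclically, $\gamma_{k}(B)\ge B$ iff the $k$-th defining inequality of $\tilde F$ holds for $B$. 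In particular, $\tilde F=\{B:\gamma_k(B)\ge B\text{ for every }k\in\{1,2,3\}\}$.

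Existence of an $\tilde F$-representative in every orbit now follows by descent on the height $\lVert B\rVert:=x+y+z+x'+y'+z'\in\mathbb Z_{>0}$: if some $\gamma_k(B)<B$ entry-wise then $\lVert\gamma_k(B)\rVert<\lVert B\rVert$, so every $\lVert\cdot\rVert$-minimiser of $\Gamma(B)$ satisfies $\gamma_k(B')\ge B'$ for all $k$ and therefore lies in $\tilde F$.

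The "moreover" clause, which simultaneously yields uniqueness, rests on the following inductive claim: for every reduced word $E_n=\gamma_{i_n}\cdots\gamma_{i_1}(B_0)$ (meaning $i_j\ne i_{j+1}$) starting from $B_0\in\tilde F$, the matrix $E_n$ satisfies the $k$-th defining inequality of $\tilde F$ for every $k\ne i_n$. Granted this, the equivalence above gives $\gamma_{i_{n+1}}(E_n)\ge E_n$ whenever $i_{n+1}\ne i_n$, producing an increasing chain $B_0\le E_1\le\cdots\le E_n$; this proves both that $B_0$ is the smallest element of $\Gamma(B_0)$ and that the $\tilde F$-representative is unique. For the base case $n=1$, the direction-$2$ inequality for $E_1=\gamma_1(B_0)$ reads $y'(zz'-2)\ge xz$, which follows from directions $1$ and $2$ of $B_0$ via $y'\cdot zz'=z\cdot y'z'\ge 2xz\ge xz+2y'$. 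The inductive step splits on whether $k$ equals $i_{n-1}$: if $k\ne i_{n-1}$, both direction-$k$ and direction-$i_n$ are inherited at $E_{n-1}$, and direction-$k$ persists because the unique entry of its inequality that $\gamma_{i_n}$ touches can only grow; if $k=i_{n-1}$, direction-$k$ \emph{failed} at $E_{n-1}$, and an estimate of the shape $y'(zz'-2)\ge 2y'\ge xz$ resurrects it, now invoking the cluster-cyclic lower bound $zz'\ge 4$ supplied by Theorem~\ref{thm: 1}.

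The principal obstacle is precisely this resurrection step: one must show that each defining inequality of $\tilde F$, when broken by the corresponding $\gamma_k$, is restored after either of the two remaining mutations. It is here that the cluster-cyclic hypothesis $xx',yy',zz'\ge 4$ enters decisively, and the induction cannot be closed without it. Once the inductive lemma is established, uniqueness becomes automatic: two elements of $\Gamma(B_0)\cap\tilde F$ would each be a global minimum of $\Gamma(B_0)$ in the partial order, so they must coincide.
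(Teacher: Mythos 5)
Your proposal is correct, and its overall architecture coincides with the paper's: produce a representative in $\tilde F$ by an integrality-based descent, then show that every reduced word starting from an element of $\tilde F$ yields a monotonically increasing chain, which gives minimality and hence uniqueness. The execution, however, is genuinely different at every stage. The paper transports the whole problem to the skew-symmetric side via $\mathrm{Sk}$ (Lemma~\ref{lem: skew-symmetrizable M1}), gets existence from the strict decrease of the integer $pqr$ (Proposition~\ref{prop: integer lemma}), and proves the monotone-chain property through the (M1)/(M2)/(M3) trichotomy: an element reached by a reduced word cannot satisfy (M1) because the backward direction decreases, and cannot satisfy (M3) because (M3) forces $\min(p,q,r)<2$ (Lemma~\ref{lem: M3}) while the chain stays above $(2,2,2)$. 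You instead stay entirely at the level of the entries $(x,y,z,x',y',z')$, run the descent on the integer sum of the entries, and replace the trichotomy by a two-case induction whose only nontrivial branch is the explicit ``resurrection'' inequality (in the representative case $x'(zz'-2)\ge 2x'>yz$), which consumes exactly the same input --- the bound $xx',yy',zz'\ge 4$, i.e.\ the chain staying above $(2,2,2)$ --- in a more hands-on form; note this bound also follows directly from the defining inequalities of $\tilde F$ by multiplying them pairwise, so you need not invoke Theorem~\ref{thm: 1} and there is no circularity. Your version is more self-contained, avoiding $\mathrm{Sk}$, the (M$k$) formalism and Lemma~\ref{lem: M3}, at the cost of redoing by hand bookkeeping that the paper's trichotomy packages once and reuses in Sections~\ref{sec: integer skew-symmetrizable case} and \ref{sec: the case (B)}. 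One small imprecision: in the branch $k=i_{n-1}$ you assert that direction $k$ \emph{failed} at $E_{n-1}$; it may in fact hold there (e.g.\ at a fixed point), but then the persistence argument of the other branch applies verbatim, so the two cases together remain exhaustive.
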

Namely, for each integer skew-symmetrizable, positive, and cluster-cyclic matrix $B$, there is the minimum element of $\Gamma(B)$, and $\tilde{F}$ is the set of all such minimum elements. Unfortunately, this result cannnot generalize for the real case. (See Theorem~\ref{thm: identification of (B)}.)
\par
These two results are natural generalizations of the skew-symmetric case. On the other hand, the following result is the biggest difference between the skew-symmetrizable case and the skew-symmetric case. 
\begin{thm}[Theorem~\ref{thm: surjectivity}]
The Markov constant $C(B)$ can take any integer less than or equal to $4$ by choosing an integer skew-symmetrizable and cluster-cyclic matrix $B$.
\end{thm}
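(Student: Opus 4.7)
The plan is to exhibit an explicit one-parameter family of integer, skew-symmetrizable, cluster-cyclic matrices $B_N$, indexed by $N \in \mathbb{Z}_{\ge 2}$, with $C(B_N) = 6 - N$. Since $\{6 - N : N \in \mathbb{Z}_{\ge 2}\}=\mathbb{Z}_{\le 4}$, this will settle the theorem.

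To locate such a family, I would fix four of the six entries $x,x',y,y',z,z'$ in (\ref{eq: matrix}) and let the remaining two depend linearly on $N$, arranging things so that the skew-symmetrizability constraint $xyz=x'y'z'$ is preserved and so that $C(B_N)$ depends on $N$ linearly with slope $\pm 1$. A short search leads to the choice $(x,x',y,y',z,z')=(2,3,N,2,3,N)$, that is,
\[
B_N := \begin{pmatrix} 0 & -N & N \\ 3 & 0 & -3 \\ -2 & 2 & 0 \end{pmatrix}.
\]
Then $xyz = x'y'z' = 6N$, so $B_N$ is skew-symmetrizable with integer entries, and positive-cyclic since every nonzero entry has the correct sign pattern with $x,y,z,x',y',z'>0$.

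Next I would invoke Theorem~\ref{thm: 1}. One computes
\[
xx' = 6,\qquad yy' = 2N,\qquad zz' = 3N,
\]
all at least $4$ for $N \ge 2$, and
\[
C(B_N) = 6 + 2N + 3N - 6N = 6 - N,
\]
which is at most $4$ in the same range. Theorem~\ref{thm: 1} therefore guarantees that every $B_N$ is cluster-cyclic, and as $N$ ranges over $\mathbb{Z}_{\ge 2}$ the values $C(B_N)=6-N$ sweep out exactly $\mathbb{Z}_{\le 4}$.

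The only real obstacle is guessing a family that simultaneously enforces $xyz=x'y'z'$, keeps the three pairwise products $xx',yy',zz'$ above $4$, and makes $C$ depend on the parameter with unit slope (so that no integer $\le 4$ is skipped); after that, the proof is a direct verification combined with one appeal to Theorem~\ref{thm: 1}, with no further conceptual content.
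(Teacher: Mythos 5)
Your proof is correct and follows essentially the same strategy as the paper: exhibit an explicit one-parameter family realizing every value of $\mathbb{Z}_{\leq 4}$ and certify cluster-cyclicity by the criterion of Theorem~\ref{thm: 1}. The only (cosmetic) difference is that you work directly with integer matrices, whereas the paper produces the family $S=(\sqrt{5(5-n)},2\sqrt{5-n},\sqrt{5})$ in $\widehat{\mathcal{S}}^{\mathrm{CP}}$ and uses the correspondence $C(\mathcal{M}_{\mathbb{Z}}^{\mathrm{CP}})=C(\widehat{\mathcal{S}}^{\mathrm{CP}})$.
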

Because of this property (in particular, the fact that there is a cluster-cyclic matrix $B$ with $0 < C(B) < 4$), we need to reconstruct a proof in \cite{BBH11} even though we focus on the integer skew-symmetrizable matrices (see Example~\ref{ex: examples of 0<C<4}).
\subsection{Structure of the paper}
In section~\ref{sec: definition}, we recall the properties of mutations introduced by \cite{FZ02, FZ03a}.
\par
In section~\ref{sec: Markov constant}, we define the Markov constant. This is a generalization for the skew-symmetrizable case of the one which is introduced in \cite{BBH11}.
\par
In section~\ref{section: reduction}, we introduce more simple notations, which is similar to \cite{BBH11, FT19}. In this notations, we may decompose Theorem~\ref{thm: 1} into Lemma~\ref{lem: 2.3}, Lemma~\ref{lem: 2.2}, and Lemma~\ref{lem: 2.1}.
\par
In section~\ref{sec: proof of lemma 2.3}, we show Lemma~\ref{lem: 2.3}.
\par
In section~\ref{sec: 1,2-orbit}, we show that some mutation can be expressed by the Chebyshev polynomials of the second kind. Based on this, we may prove Lemma~\ref{lem: 2.2}. These ideas have appeared in \cite{FT19}.
\par
In section~\ref{sec: Mk conditions} and \ref{sec: proof of lemma 2.1}, we show Lemma~\ref{lem: 2.1}. Most ideas are similar to the one in \cite{BBH11}. However some modifications are necessarily to generalize it for the real and skew-symmetrizable case.
\par
In section~\ref{sec: integer skew-symmetrizable case} and \ref{sec: the case (B)}, we derive some new results. In section~\ref{sec: integer skew-symmetrizable case}, we deal with the integer skew-symmetrizable case. In particular, we focus on the difference between the result of the skew-symmetric case in \cite{BBH11} and the one of the skew-symmetrizable case. In section~\ref{sec: the case (B)}, we consider the case which does not occur when we consider integer skew-symmetric matrces.

\begin{acknowledgement}
This problem was suggested by Tomoki Nakanishi, and I thank him for various important advices. I also thank Zhichao Chen for careful reading and advices.
\par
I thank Ahmet Seven for noticing the important related works.
\par
This work was financially supported by JST SPRING, Grant Number JPMJSP2125. The author would like to take this opportunity to thank the 
“THERS Make New Standards Program for the Next Generation Researchers.” 
\end{acknowledgement}

\section{Definitions}\label{sec: definition}
\subsection{Skew-symmetrizable matrix and mutation}
In this paper, we will focus on skew-symmetrizable matrices of rank $3$ with real entries. Let $\mathrm{M}_3(\mathbb{R})$ be the set of all $3 \times 3$ real matrices. A real matrix $B=(b_{ij}) \in \mathrm{M}_3(\mathbb{R})$ is called {\em skew-symmetrizable} if there exists a diagonal matrix $D=\mathrm{diag}(d_1,d_2,d_3)$ with positive real numbers $d_1,d_2,d_3>0$ such that $DB$ is skew-symmetric. This $D$ is said to be a {\em skew-symmetrizer} of $B$.
\par
Skew-symmetrizable matrices have the following properties.
\begin{prop}[{\cite[Lem.~7.4]{FZ03a}}]\label{prop: fundamental property of skew-symmetrizable}
Let $B=(b_{ij}) \in \mathrm{M}_3(\mathbb{R})$ be a skew-symmetrizable matrix. Then, we have the following statements.
\\
(a)\ $B$ is sign-skew-symmetric, namely, $b_{ij}$ and $-b_{ji}$ have the same sign for any $i,j$. (In particular, every diagonal entry is $0$.)
\\
(b)\ We have
\begin{equation}\label{eq: symmetrizable condition}
b_{12}b_{23}b_{31}=-b_{32}b_{21}b_{13}.
\end{equation}
\end{prop}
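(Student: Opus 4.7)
The plan is to unwind the definition of skew-symmetrizable directly. We are given a diagonal matrix $D=\mathrm{diag}(d_1,d_2,d_3)$ with $d_1,d_2,d_3>0$ such that $DB$ is skew-symmetric. Since $(DB)_{ij}=d_i b_{ij}$, the skew-symmetry condition $DB=-(DB)^T$ yields the scalar identity
\begin{equation*}
d_i b_{ij}=-d_j b_{ji}\qquad\text{for all }i,j.
\end{equation*}
This single relation is the only ingredient we will need.

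For part (a), setting $i=j$ in the above identity gives $d_i b_{ii}=-d_i b_{ii}$, so $b_{ii}=0$ because $d_i>0$. For $i\neq j$, rewrite it as $b_{ij}=-(d_j/d_i)\,b_{ji}$. Since $d_j/d_i$ is a positive real number, $b_{ij}$ and $-b_{ji}$ differ by a positive factor, hence share the same sign. This is exactly the sign-skew-symmetric property.

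For part (b), apply the relation $b_{ij}=-(d_j/d_i)b_{ji}$ to the three off-diagonal pairs appearing in the claim, giving $b_{12}=-(d_2/d_1)b_{21}$, $b_{23}=-(d_3/d_2)b_{32}$, and $b_{31}=-(d_1/d_3)b_{13}$. Multiplying these three equations, the coefficient $(d_2/d_1)(d_3/d_2)(d_1/d_3)$ telescopes to $1$, while the signs combine to $(-1)^3=-1$. Therefore
\begin{equation*}
b_{12}b_{23}b_{31}=-b_{21}b_{32}b_{13}=-b_{32}b_{21}b_{13},
\end{equation*}
which is the claimed identity \eqref{eq: symmetrizable condition}. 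There is no serious obstacle here: the entire proposition is a short algebraic consequence of the existence of a positive skew-symmetrizer, and the only mild care needed is to keep track of signs and the fact that $d_j/d_i>0$ so that the sign of $b_{ij}$ is genuinely determined by $-b_{ji}$.
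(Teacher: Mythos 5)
Your proof is correct and is the standard argument: the paper itself gives no proof, simply citing \cite[Lem.~7.4]{FZ03a}, and your direct computation from the identity $d_i b_{ij} = -d_j b_{ji}$ is exactly the intended verification. No issues.
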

Conventionally, the sign $\varepsilon_x$ of a real number $x \in \mathbb{R}$ is defined as follows: $\varepsilon_x=0$ if $x=0$ and, otherwise, $\varepsilon_x=\pm1$ with $\varepsilon_x x > 0$.
Next, we difine mutations of skew-symmetrizable matrices.
\begin{dfn}
Let $B=(b_{i,j}) \in \mathrm{M}_3(\mathbb{R})$ be a skew-symmetrizable matrix. We define {\em mutations} $\mu_{k}(B)=(b'_{ij})\in\mathrm{M}_3(\mathbb{R})$\ ($k=1,2,3$) as follows:
\begin{equation}
b'_{ij}=\begin{cases}
-b_{ij} & i=k\ \textup{or}\ j=k,\\
b_{ij}+b_{ik}[b_{kj}]_{+} + [-b_{ik}]_{+}b_{kj} & i,j \neq k,
\end{cases}
\end{equation}
where $[a]_{+}$ is defined as $[a]_{+}=\max(a,0)$.
\end{dfn}
It is known that $\mu_k\mu_k(B)=B$ holds for any $B \in \mathrm{M}_3(\mathbb{R})$ and $k=1,2,3$ \cite{FZ02}. We can check that every skew-symmetrizer of $B$ is also a skew-symmetrizer of $\mu_k(B)$.
\par
For any skew-symmetrizable matrix $B$, we may define the corresponding quiver $Q(B)$ wighted by $\mathbb{R}_{\geq 0}^2$ as follows:
\begin{itemize}
\item The vertices of $Q(B)$ are indexed by $1$, $2$, and $3$.
\item For each vertex pair $i\neq j$, if $b_{ij}>0$ or, equivalently, $b_{ji}<0$, there is an edge $i\overset{(b_{ij},-b_{ji})}{\longrightarrow}j$. If $b_{ij}<0$ or $b_{ji}>0$, there is an edge $j \overset{(b_{ji},-b_{ij})}{\longrightarrow}i$. If $b_{ij}=b_{ji}=0$, there are no edges between $i$ and $j$.
\end{itemize}
A skew-symmetrizable matrix $B$ is called {\em cyclic} if the corresponding quiver $Q(B)$ is cyclic. Otherwise, $B$ is said to be {\em acyclic}. By definition, $B$ is cyclic if and only if $B$ may be expressed as
\begin{equation}\label{eq: exchange matrix}
\left(\begin{matrix}
0 & -z' & y\\
z & 0 & -x'\\
-y' & x & 0
\end{matrix}\right)
\end{equation}
for some $x,y,z,x',y',z' \in \mathbb{R}$ such that all of these numbers are either positive or negative. When all of these numbers are positive (resp. negative), this matrix is said to be {\em positive-cyclic} (resp. {\em negative-cyclic}). The corresponding quivers are described as in Figure~\ref{fig: positive-cyclic} and Figure~\ref{fig: negative-cyclic}.
\begin{figure}[hbtp]
\centering
\begin{tabular}{cc}
\begin{minipage}{0.5\linewidth}
\centering
\begin{tikzpicture}
\draw (-1.3,0) node {$1$};
\draw (1.3,0) node {$2$};
\draw (0,1.8) node {$3$};
\draw [<-] (-0.2,1.5)--(-0.6,0.9) node[left]{$(y,y')$} --(-1,0.3);
\draw [<-] (1,0.3)--(0.6,0.9)node[right]{$(x,x')$}--(0.2,1.5);
\draw[<-] (-1,0)--(0,0)node[above]{$(z,z')$}--(1,0);
\end{tikzpicture}
\caption{Positive}\label{fig: positive-cyclic}
\end{minipage}
\begin{minipage}{0.5\linewidth}
\centering
\begin{tikzpicture}
\draw (-1.3,0) node {$1$};
\draw (1.3,0) node {$2$};
\draw (0,1.8) node {$3$};
\draw [->] (-0.2,1.5)--(-0.6,0.9) node[left]{$(-y',-y)$} --(-1,0.3);
\draw [->] (1,0.3)--(0.6,0.9)node[right]{$(-x',-x)$}--(0.2,1.5);
\draw[->] (-1,0)--(0,0)node[above]{$(-z',-z)$}--(1,0);
\end{tikzpicture}
\caption{Negative}\label{fig: negative-cyclic}
\end{minipage}
\end{tabular}
\end{figure}
\par
Next, we define the cluster-cyclicity. This is the main contents in this paper.
\begin{dfn}
Let $B \in \mathrm{M}_3(\mathbb{R})$ be a skew-symmetrizable matrix. If every matrix which is obtained by mutating $B$ finitely many times is cyclic, then $B$ is said to be {\em cluster-cyclic}; otherwise, $B$ is said to be {\em cluster-acyclic}.
\end{dfn}
For any skew-symmetrizable matrix $B$, if $B$ is positive-cyclic (resp. negative-cyclic), then $\mu_k(B)$ is not positive-cyclic because every entry in the $k$-th row or column changes their signs. In particular, if $B$ is cluster-cyclic, positive-cyclic and negative-cyclic matrices are generated alternately by applying mutations.

\subsection{Double-sided-skew-symmetrized form}
Our subsequent purpus is to identify whether a given skew-symmetrizable matrix $B \in \mathrm{M}_3(\mathbb{R})$ is cluster-cyclic or not. For this, it is enough to consider the certain skew-symmetric matrix, which we name a {\em double-sided-skew-symmetrized form}. It was introduced by \cite[Lem.~8.3]{FZ03a} when they consider the mutations of diagrams corresponding to skew-symmetrizable matrices.
\par
For any real number $x$, we write the sign of $x$ by $\varepsilon_{x} \in \{0,\pm 1\}$.
\begin{dfn}\label{dfn: double-sided-skew-symmetrized matrix}
Let
\begin{equation}
B=\left(\begin{matrix}
0 & -z' & y\\
z & 0 & -x'\\
-y' & x & 0
\end{matrix}\right) \in \mathrm{M}_3(\mathbb{R})
\end{equation}
be a (not necessarily cyclic) skew-symmetrizable matrix. (Note that $x$, $y$, and $z$ have the same sign as $x'$, $y'$, and $z'$, respectively.) Set $p=\varepsilon_x\sqrt{xx'}$,\ $q=\varepsilon_y\sqrt{yy'}$,\ and $r=\varepsilon_z\sqrt{zz'}$. Then, define the skew-symmetric matrix $\mathrm{Sk}(B) \in \mathrm{M}_3(\mathbb{R})$ as
\begin{equation}
\mathrm{Sk}(B)=\left(\begin{matrix}
0 & -r & q\\
r & 0 & -p\\
-q & p & 0
\end{matrix}\right).
\end{equation}
We call it the {\em double-sided-skew-symmetrized form} of $B$.
\end{dfn}
The name of {\em double-sided-skew-symmetrized} comes from the following property. 
\begin{lem}[{\cite[Lem.~8.3]{FZ03a}}]
Let $B \in \mathrm{M}_3(\mathbb{R})$ be a skew-symmetrizable matrix. Then, for arbitrary skew-symmetrizer $D=\mathrm{diag}(d_1,d_2,d_3)$ of $B$, by choosing its square root $D^{1/2}=\mathrm{diag}(\sqrt{d_1},\sqrt{d_2},\sqrt{d_3})$, it holds that
\begin{equation}\label{eq: double-sided expression}
D^{\frac{1}{2}}BD^{-\frac{1}{2}}=\mathrm{Sk}(B),
\end{equation}
where $D^{-1/2}=(D^{1/2})^{-1} = \mathrm{diag}\left(\sqrt{d_1}^{-1},\sqrt{d_2}^{-1},\sqrt{d_3}^{-1}\right)$.
\end{lem}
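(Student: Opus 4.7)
The plan is to verify the identity by a direct entry-by-entry computation, using only the defining property of the skew-symmetrizer. First I translate the condition that $DB$ is skew-symmetric into the scalar relation $d_i b_{ij} + d_j b_{ji} = 0$ for each $i\neq j$; when $b_{ij}\neq 0$, this rearranges to $d_i/d_j = -b_{ji}/b_{ij}$. By sign-skew-symmetry (Proposition~\ref{prop: fundamental property of skew-symmetrizable}(a)), $b_{ij}$ and $-b_{ji}$ share a common sign, so this ratio is nonnegative and its square root is real. In the degenerate subcase $b_{ij}=b_{ji}=0$, the corresponding entry of $\mathrm{Sk}(B)$ also vanishes by the definition of $p,q,r$, so nothing needs to be verified there.

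Next I compute the $(i,j)$-entry of the conjugated matrix, which is simply $b_{ij}\sqrt{d_i/d_j}$, and substitute the identity above to obtain
\[
b_{ij}\sqrt{d_i/d_j} \;=\; b_{ij}\sqrt{-b_{ji}/b_{ij}} \;=\; \varepsilon_{b_{ij}}\sqrt{-b_{ij}b_{ji}}.
\]
It then remains to confirm that this expression coincides with the relevant entry of $\mathrm{Sk}(B)$ in each of the six off-diagonal positions. At position $(1,2)$, for instance, one has $b_{12}=-z'$ and $b_{21}=z$, so $-b_{12}b_{21}=zz'$ and $\varepsilon_{b_{12}}=-\varepsilon_{z}$ (as $z$ and $z'$ share a sign), yielding $-\varepsilon_z\sqrt{zz'}=-r$ exactly as required. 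The five remaining entries are handled identically after the analogous reading-off of $\pm p,\pm q,\pm r$.

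The only real subtlety lies in the sign bookkeeping through the $\varepsilon$ symbols and in confirming that the radicands $-b_{ij}b_{ji}$ are nonnegative; both concerns reduce immediately to the sign-skew-symmetry of $B$. I therefore do not anticipate any conceptual obstacle, and the main writing task is to present the verification of the three essentially distinct off-diagonal positions $(1,2)$, $(2,3)$, $(3,1)$ in a uniform way.
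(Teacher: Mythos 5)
Your proposal is correct: the identity $(D^{1/2}BD^{-1/2})_{ij}=b_{ij}\sqrt{d_i/d_j}$ combined with the skew-symmetrizer relation $d_ib_{ij}=-d_jb_{ji}$ and sign-skew-symmetry gives exactly $\varepsilon_{b_{ij}}\sqrt{-b_{ij}b_{ji}}$, which matches the entries $\pm p,\pm q,\pm r$ of $\mathrm{Sk}(B)$, and your handling of the degenerate case $b_{ij}=b_{ji}=0$ is also fine. The paper itself gives no proof, citing \cite[Lem.~8.3]{FZ03a}, and your direct entry-by-entry verification is precisely the standard argument behind that citation.
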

In the notations of Definition~\ref{dfn: double-sided-skew-symmetrized matrix}, we often use the following equality.
\begin{equation}\label{eq: pqr lemma}
xyz=x'y'z'=pqr.
\end{equation}
The first equality follows from Proposition~\ref{prop: fundamental property of skew-symmetrizable}~(b). The second equality may be verified as follows:
\begin{equation}
xyz=\varepsilon_{xyz}\sqrt{|xyz|}\sqrt{|x'y'z'|}=\varepsilon_x\varepsilon_y\varepsilon_z\sqrt{|xx'|}\sqrt{|yy'|}\sqrt{|zz'|}=pqr.
\end{equation}
Note that $|xyz|=|x'y'z'|=\sqrt{|xyz|}\sqrt{|x'y'z'|}$.
\par
This double-sided-skew-symmetrized form is compatible for the cluster-cyclicity.
\begin{prop}\label{prop: fundamental property of DSSS}
Let $B \in \mathrm{M}_3(\mathbb{R})$ be a skew-symmetrizable matrix. Then, the following statements hold:\\
(a) Every entry of $\mathrm{Sk}(B)$ has the same sign as the corresponding entry of $B$. In patricular, $B$ is cyclic if and only if $\mathrm{Sk}(B)$ is cyclic.\\
(b)\textup{{\cite[Lem.~8.4]{FZ03a}}} For any $k=1,2,3$, we have
\begin{equation}
\mu_k(\mathrm{Sk}(B))=\mathrm{Sk}(\mu_k(B)).
\end{equation}
\par
In particular, $B$ is cluster-cyclic if and only if $\mathrm{Sk}(B)$ is cluster-cyclic.
\end{prop}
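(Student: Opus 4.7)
Part (a) is essentially a sign check. Since $B$ is skew-symmetrizable, Proposition~\ref{prop: fundamental property of skew-symmetrizable}~(a) tells us it is sign-skew-symmetric, so in the expression (\ref{eq: matrix}) the numbers $x$ and $x'$ share the sign $\varepsilon_x$, and therefore $xx'\geq 0$. Hence $\sqrt{xx'}$ is a well-defined non-negative real and $p=\varepsilon_x\sqrt{xx'}$ has the same sign as $x$; the analogous argument handles $q$ and $r$. Since cyclicity is determined entirely by the signs of the off-diagonal entries, the cyclicity equivalence is immediate.

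For (b), the plan is to reduce the identity $\mu_k(\mathrm{Sk}(B))=\mathrm{Sk}(\mu_k(B))$ to the equivariance of mutation under conjugation by a positive diagonal matrix. By (\ref{eq: double-sided expression}), $\mathrm{Sk}(B)=D^{1/2}BD^{-1/2}$ for any skew-symmetrizer $D$. Because every skew-symmetrizer of $B$ is also a skew-symmetrizer of $\mu_k(B)$ (noted just after the definition of mutation), the same equality applies to $\mu_k(B)$, so it suffices to prove
\[
\mu_k\!\left(D^{1/2}BD^{-1/2}\right)=D^{1/2}\mu_k(B)D^{-1/2}.
\]

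I would verify this by a direct entrywise computation. Writing $a_{ij}=\sqrt{d_i/d_j}\,b_{ij}$ for the entries of $D^{1/2}BD^{-1/2}$, the row/column indexed by $k$ simply flips sign on both sides. For $i,j\neq k$, I expand the right-hand side via the mutation rule and use the key property $[\lambda x]_+=\lambda[x]_+$ for $\lambda>0$ to pull the positive scalars $\sqrt{d_k/d_j}$ and $\sqrt{d_i/d_k}$ out of the $[\,\cdot\,]_+$ brackets. The factors then telescope via $\sqrt{d_i/d_k}\cdot\sqrt{d_k/d_j}=\sqrt{d_i/d_j}$, which matches the scalar appearing in $a_{ij}$, so both sides agree term by term.

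Finally, the cluster-cyclic equivalence follows by iterating: given any sequence $\mu_{k_n}\cdots\mu_{k_1}$, part (b) yields $\mu_{k_n}\cdots\mu_{k_1}(\mathrm{Sk}(B))=\mathrm{Sk}(\mu_{k_n}\cdots\mu_{k_1}(B))$, and part (a) says this iterate is cyclic if and only if $\mu_{k_n}\cdots\mu_{k_1}(B)$ is. The only delicate point is the entrywise check in (b): one must be careful that the scalars inside the truncations $[\,\cdot\,]_+$ are genuinely positive (which is guaranteed because $D^{1/2}$ has strictly positive diagonal entries), so that $[\,\cdot\,]_+$ and multiplication by $\sqrt{d_i/d_k}$ commute; once this is noted, the computation is routine.
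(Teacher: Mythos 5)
Your proposal is correct and follows essentially the same route as the paper, which declares (a) obvious from the definitions, cites \cite[Lem.~8.4]{FZ03a} for (b), and gets the final equivalence by induction on the number of mutations. Your entrywise verification of $\mu_k\bigl(D^{1/2}BD^{-1/2}\bigr)=D^{1/2}\mu_k(B)D^{-1/2}$ (using $[\lambda x]_{+}=\lambda[x]_{+}$ for $\lambda>0$ and the fact that a skew-symmetrizer of $B$ also symmetrizes $\mu_k(B)$) simply supplies the details of the cited lemma that the paper omits.
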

The statement (a) is obvious by the definition, and (b) have been shown in \cite{FZ03a}. By combining (a) and (b), we can show that the last staement by the induction on the number of mutations.

\section{Markov constant}\label{sec: Markov constant}
In this section, we define the Markov constant of a skew-symmetrizable matrix. In \cite{BBH11}, it has an important role in the problem for the integer skew-symmetric case. It is also important for the skew-symmetrizable case.
\par
In this section, we fix a skew-symmetrizable matrix and its double-sided-skew-symmetrized form as
\begin{equation}\label{eq: skew-symmetrizable matrix}
B=\left(\begin{matrix}
0 & -z' & y\\
z & 0 & -x'\\
-y' & x & 0
\end{matrix}\right),
\ 
\mathrm{Sk}(B)=\left(\begin{matrix}
0 & -r & q\\
r & 0 & -p\\
-q & p & 0
\end{matrix}\right).
\end{equation}
as in Definition~\ref{dfn: double-sided-skew-symmetrized matrix}. By Proposition~\ref{prop: fundamental property of skew-symmetrizable} and (\ref{eq: pqr lemma}), these entries satisfy the following conditions:
\begin{itemize}
\item In each triplet $(x,x',p)$, $(y,y',q)$, and $(z,z',r)$, the three entries have the same sign.
\item It holds that $xyz=x'y'z'=pqr$.
\end{itemize}
The following definition is suggested by Tomoki Nakanishi. 
\begin{dfn}\label{dfn: Markov constant}
For any cyclic matrix $B \in \mathrm{M}_3(\mathbb{R})$ of the form (\ref{eq: skew-symmetrizable matrix}), define the {\em Markov constant}
\begin{equation}
C(B)=xx'+yy'+zz'-|xyz|.
\end{equation}
\end{dfn}
The following properties are fundamental.
\begin{prop}\label{prop: expressions of Markov constants}
Let $B$ be a skew-symmetrizable matrix.
\\
(a)\ We have $C(\mathrm{Sk}(B))=C(B)$.\\
(b)\ For any $k=1,2,3$, we have $C(\mu_k(B))=C(B)$.
\end{prop}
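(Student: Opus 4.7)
The plan is straightforward: part~(a) reduces to a direct substitution, and part~(b) follows by combining (a) with the commutativity $\mathrm{Sk}\circ\mu_k=\mu_k\circ\mathrm{Sk}$ of Proposition~\ref{prop: fundamental property of DSSS}(b) to reduce the statement to the skew-symmetric case, where the identity can be verified by an explicit computation.

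For (a), I would unwind Definition~\ref{dfn: double-sided-skew-symmetrized matrix}. The skew-symmetric matrix $\mathrm{Sk}(B)$, written in the form~(\ref{eq: skew-symmetrizable matrix}), has its primed and unprimed entries coinciding and equal to $p$, $q$, $r$, which satisfy $p^2=xx'$, $q^2=yy'$, $r^2=zz'$. Moreover, by (\ref{eq: pqr lemma}) we have $pqr=xyz$, so $|pqr|=|xyz|$. Substituting into the definition of $C$ yields $C(\mathrm{Sk}(B))=p^2+q^2+r^2-|pqr|=xx'+yy'+zz'-|xyz|=C(B)$.

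For (b), Proposition~\ref{prop: fundamental property of DSSS}(b) gives $\mathrm{Sk}(\mu_k(B))=\mu_k(\mathrm{Sk}(B))$. Applying~(a) to both $B$ and $\mu_k(B)$ then produces $C(\mu_k(B))=C(\mathrm{Sk}(\mu_k(B)))=C(\mu_k(\mathrm{Sk}(B)))$, reducing the statement to the case in which $B$ is skew-symmetric. By the permutational symmetry among the three indices I can assume $k=1$, and I would compute $\mu_1(S)$ for a skew-symmetric $S$ of rank~$3$ from the mutation rule. The entries in the first row and first column of $S$ merely flip sign under $\mu_1$, so they contribute unchanged products to $yy'$ and $zz'$, while only the $(2,3)$- and $(3,2)$-entries are genuinely altered.

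The main obstacle is the sign bookkeeping forced by the $[\,\cdot\,]_+$ in the mutation rule, which requires a short case analysis on the signs of $q$ and $r$. In each case the new $(2,3)$-entry has the shape $-(p-\sigma qr)$ for some $\sigma\in\{-1,0,+1\}$ determined by $\varepsilon_q$ and $\varepsilon_r$, and the desired invariance $C(\mu_1(S))=C(S)$ reduces to the algebraic identity
\[
(p-\sigma qr)^2+q^2+r^2-\bigl|(p-\sigma qr)\,qr\bigr|=p^2+q^2+r^2-|pqr|,
\]
which follows after expanding the square and carefully handling the absolute value. Combined with~(a), this yields~(b).
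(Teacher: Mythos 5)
Your part (a) matches the paper's argument (it is immediate from $p^2=xx'$, $q^2=yy'$, $r^2=zz'$ and $pqr=xyz$), and your reduction of (b) to the skew-symmetric case via $\mathrm{Sk}\circ\mu_k=\mu_k\circ\mathrm{Sk}$ is a legitimate, arguably cleaner route than the paper's direct computation on $B$ itself. However, the algebraic identity you invoke at the end is false, and this is a genuine gap. Take $S=(p,q,r)=(3,1,1)$, a positive-cyclic skew-symmetric matrix with $C(S)=9+1+1-3=8$. Here $\sigma=1$ and $\mu_1(S)$ has weight triple $(3-1,-1,-1)=(2,-1,-1)$, so your left-hand side is $4+1+1-|2\cdot 1\cdot 1|=4\neq 8$. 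The identity
\[
(p-\sigma qr)^2+q^2+r^2-\bigl|(p-\sigma qr)\,qr\bigr|=p^2+q^2+r^2-|pqr|
\]
holds only when the mutated matrix is again cyclic (for $\sigma=1$ this means $p\leq qr$); when $p>qr$ the mutated matrix is acyclic and the two sides genuinely differ, so your proof would ``establish'' $74=92$ on the input $(10,1,1)$.

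What you are missing is that Definition~\ref{dfn: Markov constant} attaches the term $-|xyz|$ only to \emph{cyclic} matrices; for acyclic matrices the Markov constant must carry the opposite sign, $xx'+yy'+zz'+|xyz|$ (the convention of \cite{BBH11}). This is exactly what the paper's factor $\varepsilon_{\mu_k(B)}$ is encoding: the paper parametrizes $\mu_k(B)$ with entries $u,v,w,\dots$ chosen so that $C(\mu_k(B))=uu'+vv'+ww'-uvw$ holds uniformly, the sign of $uvw$ absorbing the cyclic/acyclic distinction, and then a single polynomial computation finishes the proof. With that extended definition your case analysis does go through: for $\sigma=1$ and $p>qr$ one checks $(p-qr)^2+q^2+r^2+(p-qr)qr=p^2+q^2+r^2-pqr$, which is a true identity. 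So the repair is routine, but as written the concluding step of your argument asserts a false equation, and the subtlety it hides (the sign flip of the Markov constant on acyclic matrices) is precisely the content that makes (b) nontrivial.
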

\begin{proof}
The statement (a) follows from (\ref{eq: pqr lemma}). We show (b). Set
\begin{equation}
B=\left(\begin{matrix}
0 & -z' & y\\
z & 0 & -x'\\
-y' & x & 0
\end{matrix}\right),
\ 
\mu_k(B)=\left(\begin{matrix}
0 & w' & -v\\
-w & 0 & u'\\
v' & -u & 0
\end{matrix}\right).
\end{equation}
Suppose that $B$ is positive-cyclic. Regardless $B$ and $\mu_k(B)$ is cyclic or not, we have $\varepsilon_{\mu_k(B)}|uvw|=uvw$. Thus, we have
\begin{equation}
C(\mu_k(B))=uu'+vv'+ww'-uvw.
\end{equation}
We can check $C(\mu_k(B))=C(B)$ by a direct calculation. For example, if $k=1$, we have
\begin{equation}\label{eq: invariancy of the Markov constant}
\begin{aligned}
C(\mu_1(B))&=(y'z'-x)(yz-x')+yy'+zz'-(y'z'-x)yz\\
&=xx'+yy'+zz'-x'y'z'=C(B).
\end{aligned}
\end{equation}
Suppose that $B$ is acyclic. For the simplicity, we assume $x,x' \leq 0$ and $y,y',z,z' > 0$. Then, regardless of the cyclicity of $\mu_k(B)$, we have $\varepsilon_{\mu_k(B)}|uvw|=-uvw$ holds. By a similar calculation of (\ref{eq: invariancy of the Markov constant}), we may show $C(\mu_k(B))=C(B)$.
\end{proof}

\section{Notations}\label{section: reduction}
In this section, we give more simple notations as in \cite{BBH11}, and we translate Theorem~\ref{thm: 1} into these notations.
\par
Our purpose is to identify whether a given skew-symmetrizable matrix $B$ is cluster-cyclic or not. In this case, $\gamma_k=-\mu_k$ is more convenient than the usual mutation $\mu_k$. Note that $-\mu_k(B)=\mu_k(-B)$ for any skew-symmetrizable matrix $B$. Thus, $B$ is cluster-cyclic if and only if every $\gamma_{i_s}\cdots\gamma_{i_1}(B)=(-1)^{s}\mu_{i_s}\cdots\mu_{i_1}(B)$ is cyclic.
\par
Moreover, for any skew-symmetrizable matrix
\begin{equation}
B=\left(\begin{matrix}
0 & -z' & y\\
z & 0 & -x'\\
-y' & x & 0
\end{matrix}\right),
\end{equation}
we will write it as the following tuplet:
\begin{equation}
M=\left(\begin{matrix}
x & y & z\\
x' & y' & z'
\end{matrix}\right).
\end{equation}
By Proposition~\ref{prop: fundamental property of skew-symmetrizable}, these numbers satisfy the following properties.
\begin{equation}\label{eq: properties of M}
\begin{aligned}
&\textup{The equality $xyz=x'y'z'$ holds}\\
&\textup{and the sign of two entries on the same row is the same.}
\end{aligned}
\end{equation}
Moreover, $B$ is cyclic if and only if all entries of $M$ have the same sign.
If $B$ is cyclic, the maps $\gamma_k$ may be expressed as
\begin{equation}\label{eq: gamma on M}
\begin{aligned}
\gamma_1(M)&=\left(\begin{matrix}
y'z'-x & y & z\\
yz-x' & y' & z'
\end{matrix}\right),\\
\gamma_2(M)&=\left(\begin{matrix}
x & z'x'-y & z\\
x' & zx-y' & z'
\end{matrix}\right),\\
\gamma_3(M)&=\left(\begin{matrix}
x & y & x'y'-z\\
x' & y' & xy-z'
\end{matrix}\right).
\end{aligned}
\end{equation}
Moreover, by Proposition~\ref{prop: fundamental property of DSSS}~(c), when we consider whether a skew-symmetrizable matrix $B \in \mathrm{M}_3(\mathbb{R})$ is cluster-cyclic or not, it suffices to consider whether
\begin{equation}
\mathrm{Sk}(B)=\left(\begin{matrix}
0 & -r & q\\
r & 0 & -p\\
-q & p & 0
\end{matrix}\right)
\end{equation}
is cluster-cyclic or not. We may see this matrix as a triplet of real numbers $(p,q,r) \in \mathbb{R}^3$. If $\mathrm{Sk}(B)$ is cyclic, the maps $\gamma_k$ may be expressed as
\begin{equation}\label{eq: gamma on S}
\begin{aligned}
\gamma_1(p,q,r)&=(qr-p,q,r),\\
\gamma_2(p,q,r)&=(p,rp-q,r),\\
\gamma_3(p,q,r)&=(p,q,pq-r).
\end{aligned}
\end{equation}
Our interest is for cyclic matrices, and we do not treat acyclic matrixes. So, we see (\ref{eq: gamma on M}) and (\ref{eq: gamma on S}) as the definition of $\gamma_k$.
\par
We summerize the definitions.
\begin{dfn}\label{dfn: definition of sets}
Define sets
\begin{equation}
\begin{aligned}
&\mathcal{M}&&=\left\{\left. M=\left(\begin{matrix}
x & y & z\\
x' & y' & z'
\end{matrix}\right)\ \right|\ \textup{$M$ satisfies (\ref{eq: properties of M})}\ \right\},\\
&\mathcal{M}^{+}&&=\left\{\left.\left(\begin{matrix}
x & y & z\\
x' & y' & z'
\end{matrix}\right) \in \mathcal{M}\ \right|\ x,y,z,x',y',z' > 0\right\},\\
&\mathcal{S}&&=\{(p,q,r)\mid p,q,r \in \mathbb{R}\}=\mathbb{R}^3,\\
&\mathcal{S}^{+}&&=\{(p,q,r) \in \mathcal{S}\mid p,q,r > 0\}.
\end{aligned}
\end{equation}
Every element of $\mathcal{M}^{+}$ and $\mathcal{S}^{+}$ is called {\em positive}. We define maps $\gamma_k :\mathcal{M} \to \mathcal{M}$ and $\gamma_k:\mathcal{S} \to \mathcal{S}$ as (\ref{eq: gamma on M}) and (\ref{eq: gamma on S}), respectively.
\end{dfn}
\begin{dfn}
For any finite sequence $t=(t_1,\dots,t_k)$ of indices $1,2,3$, set $\gamma_t=\gamma_{t_k}\cdots\gamma_{t_1}$. For any $M \in \mathcal{M}$, let $\Gamma(M)$ be the collection of $\gamma_t(M)$ indexed by all of finite sequences $t=(t_1,\dots,t_k)$ of indices $1,2,3$ with $t_i \neq t_{i+1}$, where $\gamma_t=\gamma_{t_k}\cdots\gamma_{t_1}$. For any positive element $M \in \mathcal{M}^{+}$, if every element in $\Gamma(M)$ is positive, $M$ is said to be {\em cluster-positive}. Similarly, we define the cluster-positivity of $S \in \mathcal{S}^{+}$.
\par
Here, we define a map $\mathrm{Sk}:\mathcal{M} \to \mathcal{S}$ as
\begin{equation}
\mathrm{Sk}\left(\begin{matrix}
x & y & z\\
x' & y' & z'
\end{matrix}\right)
=(\varepsilon_x\sqrt{xx'},\varepsilon_y\sqrt{yy'},\varepsilon_z\sqrt{zz'}),
\end{equation}
where $\varepsilon_a \in \{0,\pm1\}$ is the sign of $a \in \mathbb{R}$. If $M \in \mathcal{M}$ corresponds to a positive-cyclic matrix $B \in \mathrm{M}_3(\mathbb{R})$, then $\mathrm{Sk}(M)$ corresponds to its double-sided skew-symmetrized form $\mathrm{Sk}(B)$. Moreover, $M$ is cluster-positive if and only if $S=\mathrm{Sk}(M)$ is cluster-positive.
\end{dfn}
\begin{dfn}
For any $M=\left(\begin{smallmatrix}
x & y & z\\
x' & y' & z'
\end{smallmatrix}\right) \in \mathcal{M}$ and $S=(p,q,r) \in \mathcal{S}$, we define the Markov constants $C(M)=xx'+yy'+zz'-xyz$ and $C(S)=p^2+q^2+r^2-pqr$.
\end{dfn}
By definition, we have $C(\mathrm{Sk}(M))=C(M)$.
Note that, for any positive-cyclic and skew-symmetrizable matrix $B$, it corresponds to an element $M \in \mathcal{M}^{+}$. Then, it holds that
\begin{equation}\label{eq: relationship of C}
C(M)=C(B). 
\end{equation}
Moreover, we may check
\begin{equation}\label{eq: Markov constant of S}
C(\gamma_k(M))=C(M).
\end{equation}
\par
We summarize the results in this section.
\begin{prop}\label{prop: relationship between B and S}
Let
\begin{equation}
B=\left(\begin{matrix}
0 & -z' & y\\
z & 0 & -x'\\
-y' & x & 0
\end{matrix}\right)
\in \mathrm{M}_3(\mathbb{R})
\end{equation}
be skew-symmetrizable and positive-cyclic. Set $p=\sqrt{xx'}$, $q=\sqrt{yy'}$, $r=\sqrt{zz'}$, and $S=(p,q,r)\in \mathcal{S}^{+}$.
\\
(a)\textup{(Proposition~\ref{prop: fundamental property of DSSS})}\ $B$ is cluster-cyclic if and only if $S$ is cluster-positive.
\\
(b)\textup{(\ref{eq: relationship of C})}\ We have $C(B)=C(S)$.
\\
(c)\textup{(\ref{eq: Markov constant of S})} On $\mathcal{S}$, the Markov constant does not change under the transformations of $\gamma_k$.
\end{prop}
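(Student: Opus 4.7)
The statement is essentially a summary bundling three already-established facts, so my plan is to prove each item by quoting the machinery already assembled and patching in the small compatibility observations needed to pass from the matrix-side objects ($B$, $\mu_k$) to the tuple-side objects ($S$, $\gamma_k$).

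For part (a), I would start from Proposition~\ref{prop: fundamental property of DSSS}, which guarantees that $\mathrm{Sk}(\mu_k(B))=\mu_k(\mathrm{Sk}(B))$ and that $\mathrm{Sk}$ preserves signs of entries. The only extra remark needed is that $\gamma_k=-\mu_k$ satisfies $\gamma_k(\mathrm{Sk}(B))=\mathrm{Sk}(\gamma_k(B))$ as well (since $\mathrm{Sk}(-B)=-\mathrm{Sk}(B)$ is immediate from Definition~\ref{dfn: double-sided-skew-symmetrized matrix}). Then by induction on the length of a sequence $t$, I translate ``$\gamma_t(B)$ is positive-cyclic'' into ``$\gamma_t(S)\in\mathcal{S}^{+}$''. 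The comment already in the text that mutations toggle positive-cyclic and negative-cyclic under $\mu_k$, while $\gamma_k$ preserves positive-cyclicity along the cluster-cyclic orbit, is what lets me conclude that cluster-cyclicity of $B$ is exactly cluster-positivity of $S$.

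For part (b), the argument is a direct computation from the definition $p=\sqrt{xx'}$, $q=\sqrt{yy'}$, $r=\sqrt{zz'}$: I have $p^2=xx'$, $q^2=yy'$, $r^2=zz'$, and
\begin{equation}
pqr=\sqrt{xx'\,yy'\,zz'}=\sqrt{(xyz)(x'y'z')}=|xyz|,
\end{equation}
where the last equality uses $xyz=x'y'z'$ from Proposition~\ref{prop: fundamental property of skew-symmetrizable}(b). Substituting into $C(S)=p^2+q^2+r^2-pqr$ yields $C(B)$.

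For part (c), I would either cite Proposition~\ref{prop: expressions of Markov constants}(b) applied to $\mathrm{Sk}(B)$ (noting that $\gamma_k$ and $\mu_k$ differ only by a global sign, and the Markov constant is manifestly invariant under $B\mapsto -B$), or verify it by a one-line expansion, e.g.\ for $k=1$:
\begin{equation}
(qr-p)^{2}+q^{2}+r^{2}-(qr-p)qr=p^{2}+q^{2}+r^{2}-pqr,
\end{equation}
and symmetric arguments for $k=2,3$. I do not foresee a genuine obstacle here; the only thing to be careful about is the sign bookkeeping in (a), making sure that the translation between ``cyclic matrix after applying an alternating sequence of $\mu_k$'' and ``positive tuple after applying the corresponding sequence of $\gamma_k$'' is watertight, which is exactly what the $\gamma_k=-\mu_k$ convention was introduced to handle.
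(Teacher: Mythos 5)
Your proposal is correct and matches the paper's treatment: the paper presents this proposition explicitly as a summary, with (a) delegated to Proposition~\ref{prop: fundamental property of DSSS} together with the $\gamma_k=-\mu_k$ sign bookkeeping, (b) following from the identity $pqr=\sqrt{(xyz)(x'y'z')}=|xyz|$ (the paper's (\ref{eq: pqr lemma})), and (c) verified by the same one-line expansion you give. No gaps; nothing further is needed.
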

Based on these results, we may translate Theorem~\ref{thm: 1} into these notations. This is the generalization of {\cite[Thm.~1.2~(1),(2)]{BBH11}}, and it was shown in \cite{FT19}.
\begin{prop}[{\cite[Lem.~3.3, Thm.~4.5, Cor.~4.8]{FT19}}]\label{prop: 1}
Let $S=(p,q,r) \in \mathcal{S}^{+}$. Then, $S$ is cluster-positive if and only if both $p,q,r \geq 2$ and $C(S) \leq 4$ hold.
\end{prop}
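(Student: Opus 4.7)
The plan is to prove both implications; the forward direction is a short algebraic invariance check, and the reverse direction splits into an oscillation argument giving $p,q,r \geq 2$ and a sum-minimization argument giving $C(S) \leq 4$.

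For ($\Leftarrow$), assume $p,q,r \geq 2$ and $C(S) \leq 4$. Since $C$ is $\gamma_k$-invariant by Proposition~\ref{prop: relationship between B and S}(c), it suffices to check that each $\gamma_k$ preserves the condition $p,q,r \geq 2$, and then iterate. The key observation is that $p$ and $qr - p$ are the two roots of the quadratic $f(X) = X^2 - (qr)X + (q^2 + r^2 - C(S))$, and $f(2) = (q - r)^2 + (4 - C(S)) \geq 0$. Since $f$ opens upward and $p \geq 2$ is a root, also $qr - p \geq 2$. Symmetric arguments cover $\gamma_2$ and $\gamma_3$, and induction on the length of a mutation sequence completes this direction.

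For ($\Rightarrow$), assume $S$ is cluster-positive. In Step~1 I show $p,q,r \geq 2$. Supposing by symmetry $p < 2$, write $p = 2\cos\theta$ for some $\theta \in (0,\pi/2)$. The composite $\gamma_2\gamma_3$ fixes $p$ and acts on $(q,r)$ by the linear map $A = \left(\begin{smallmatrix} p^2-1 & -p \\ p & -1 \end{smallmatrix}\right)$ with $\det A = 1$ and $\operatorname{tr} A = p^2 - 2 = 2\cos(2\theta)$, so successive coordinates satisfy a Chebyshev-type recursion of the second kind. Since $A$'s eigenvalues $e^{\pm 2i\theta}$ lie on the unit circle off the real axis, $A$ is an elliptic rotation preserving the positive-definite conic $q^2 + r^2 - pqr = C(S) - p^2$. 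The positive-quadrant arc of this ellipse has angular length $\arccos(-p/2) < \pi$ in the rotation coordinates, so no nontrivial $A$-orbit is contained in it; hence some iterate of $\gamma_2\gamma_3$ leaves the positive quadrant, contradicting cluster-positivity.

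In Step~2 I show $C(S) \leq 4$. By Step~1 every element of $\Gamma(S)$ has sum $\geq 6$. Let $m$ be the infimum of the coordinate sum over $\Gamma(S)$. The set $\{(p',q',r') : C((p',q',r')) = C(S),\ p',q',r' \geq 2,\ p'+q'+r' \leq p+q+r\}$ is compact, so a minimizing sequence has a convergent subsequence with limit $S_0 = (p_0,q_0,r_0)$ satisfying $p_0 + q_0 + r_0 = m$, $C(S_0) = C(S)$, and, by continuity, the sum of $\gamma_k(S_0)$ is $\geq m$ for each $k$ (even if $S_0 \notin \Gamma(S)$). These minimality conditions give $p_0q_0 \geq 2r_0$, $q_0r_0 \geq 2p_0$, and $r_0p_0 \geq 2q_0$. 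Assuming WLOG $p_0 \leq q_0 \leq r_0$, the expression $C$ is an upward-opening parabola in $r_0$ with vertex at $r_0 = p_0 q_0 / 2$, hence monotone decreasing on $[q_0, p_0q_0/2]$; substituting the worst case $r_0 = q_0$ yields $C(S) \leq p_0^2 - q_0^2(p_0 - 2) \leq p_0^2(3-p_0) \leq 4$, where the last inequality uses that $t \mapsto t^2(3-t)$ is decreasing on $[2,\infty)$ with value $4$ at $t = 2$.

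The main obstacle, and the key departure from the integer case in \cite{BBH11}, is the attainment of the infimum $m$ in Step~2 for real orbits, which need not be discrete. The compactness argument above bypasses this by permitting the minimizer $S_0$ to lie outside $\Gamma(S)$, with the required minimality inequalities inherited by continuity from nearby orbit elements. A secondary technical point is the rotation analysis in Step~1, where the bound $\arccos(-p/2) < \pi$ (which requires $p > 0$) is exactly what forces every $A$-orbit to escape the positive quadrant.
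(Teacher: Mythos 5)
Your proof is correct. The ($\Leftarrow$) direction and Step~1 are in substance the paper's arguments: your observation that $p$ and $qr-p$ are the two roots of $X^2-qrX+(q^2+r^2-C(S))$ with $f(2)=(q-r)^2+(4-C(S))\geq 0$ is the same computation as Lemma~\ref{lem: p,q,r >=2} (completing the square), and your elliptic-rotation analysis of $\gamma_2\gamma_3$ is a geometric repackaging of the Chebyshev argument of Proposition~\ref{prop: r<2} --- the paper finds $n$ with $\sin(n+1)\theta\leq 0$, while you note that a nontrivial rotation cannot confine an orbit to an arc of angle $\arccos(-p/2)<\pi$; both are valid. Step~2 is where you genuinely depart from the paper. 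The paper proves $C(S)\leq 4$ by building the (M$k$) machinery of Section~\ref{sec: Mk conditions}: it rules out (M3) for cluster-positive elements, shows the orbit either contains a unique minimal (M1) element (case (A)) or is a strictly decreasing (M2)-chain converging to $(2,2,2)$ (case (B), Lemma~\ref{lem: convergence}), and then bounds $C$ on (M1) elements (Lemma~\ref{lem: 2.1.2}). You instead minimize the coordinate sum over $\Gamma(S)$ directly, extract a limit point $S_0$ by compactness, and pass the inequalities $\text{sum}(\gamma_k(S_0))\geq\text{sum}(S_0)$ to the limit by continuity (legitimate, since $\gamma_k$ maps $\Gamma(S)$ into itself); these are exactly the (M1) inequalities $q_0r_0\geq 2p_0$, etc., and your closing optimization coincides with the computation in Lemma~\ref{lem: 2.1.2}. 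Your route is shorter and treats cases (A) and (B) uniformly, precisely because the minimizer $S_0$ is allowed to fall outside the orbit --- which is the phenomenon the paper's case (B) is designed to handle. What the paper's longer route buys is reusable structure: the fundamental domain (Theorem~\ref{thm: fundamental domain}), the classification of case (B) (Theorem~\ref{thm: identification of (B)}), and the surjectivity discussion all rest on the explicit (M$k$) conditions and the map $L$, which your compactness shortcut does not produce.
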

\begin{rem}
This result has essentially generalized for skew-symmetrizable case by \cite{Sev12}. In this paper, the result is expressed by the {\em quasi-Cartan companion} instead of the Markov constant.
\end{rem}
Now, we show Theorem~\ref{thm: 1} under the assumption of Proposition~\ref{prop: 1}.
\begin{proof}[Proof of Theorem~\ref{thm: 1}]
Let $B$ be a skew-symmetrizable and cyclic matrix. If $B$ is positive-cyclic, the claim holds by using the correspondence of $\mathrm{Sk}(B)$ and $S$. When $B$ is negative-cyclic, then $-B$ is positive-cyclic. Thus, by applying the result of Proposition~\ref{prop: 1} for $-B$, we can show Theorem~\ref{thm: 1}.
\end{proof}
Proposition~\ref{prop: 1} was shown in \cite{FT19} by using geometrical aspects of mutations. (Thus, we complete the proof of Theorem~\ref{thm: 1}.) However, we give an alternatively proof of this proposition, which is more closely related to \cite{BBH11}.
\par
We decompose Proposition~\ref{prop: 1} into the following three statemenets.
\begin{lem}\label{lem: 2.3}
Let $S=(p,q,r) \in \mathcal{S}^{+}$ satisfy $C(S) \leq 4$ and $p,q,r \geq 2$. Then, $S$ is cluster-positive.
\end{lem}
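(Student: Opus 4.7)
The plan is to show that the region
\[
D=\{(p,q,r) \in \mathcal{S}^{+} : p,q,r\geq 2,\ C(p,q,r)\leq 4\}
\]
is invariant under each transformation $\gamma_k$. Since $D \subset \mathcal{S}^{+}$, once this invariance is established, cluster-positivity follows by an easy induction on the length of the sequence of indices defining $\gamma_t$: every element of $\Gamma(S)$ lies in $D$, hence is positive.

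By the symmetry of the defining conditions in $(p,q,r)$, it is enough to verify invariance under $\gamma_{1}$, which sends $(p,q,r)$ to $(qr-p,q,r)$. Invariance of the Markov constant $C$ is already known from Proposition~\ref{prop: relationship between B and S}~(c), and the coordinates $q,r$ are unchanged, so all that has to be checked is that $qr-p\geq 2$. This single inequality simultaneously gives strict positivity of the new first coordinate and the lower bound $\geq 2$ that feeds the induction.

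The key computation is to read the hypothesis $C(p,q,r)\leq 4$ as a quadratic inequality in $p$:
\[
p^{2}-(qr)\,p+(q^{2}+r^{2}-4)\leq 0.
\]
The discriminant factors cleanly as $q^{2}r^{2}-4(q^{2}+r^{2}-4)=(q^{2}-4)(r^{2}-4)\geq 0$, so $p$ is squeezed between
\[
\alpha=\tfrac{1}{2}\bigl(qr-\sqrt{(q^{2}-4)(r^{2}-4)}\bigr),\qquad
\beta=\tfrac{1}{2}\bigl(qr+\sqrt{(q^{2}-4)(r^{2}-4)}\bigr).
\]
In particular $qr-p\geq qr-\beta=\alpha$, and the task reduces to showing $\alpha\geq 2$, that is, $qr-4\geq\sqrt{(q^{2}-4)(r^{2}-4)}$. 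Both sides are nonnegative since $q,r\geq 2$, and squaring reduces this to $(q-r)^{2}\geq 0$, which is immediate.

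The only real obstacle is recognising the factorisation of the discriminant as $(q^{2}-4)(r^{2}-4)$: that factorisation is what makes the upper bound on $p$ compatible with the lower bound $p\geq 2$, and collapses both positivity ($qr-p>0$) and preservation of the hypothesis ($qr-p\geq 2$) into the same one-line inequality $(q-r)^{2}\geq 0$. With that in hand, the induction closes without further work.
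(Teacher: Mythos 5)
Your proof is correct and follows essentially the same route as the paper: the paper isolates the same key fact (Lemma~\ref{lem: p,q,r >=2}, that $q,r\geq 2$ and $C(S)\leq 4$ force the remaining coordinate to be at least $2$) by completing the square in $p$, which is exactly your quadratic-in-$p$ analysis with the discriminant factoring as $(q^{2}-4)(r^{2}-4)$ and the final reduction to $(q-r)^{2}\geq 0$. The concluding induction, using invariance of $C$ under $\gamma_k$ and the fact that only one coordinate changes, is also identical.
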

\begin{lem}\label{lem: 2.2}
Let $S = (p,q,r) \in \mathcal{S}^{+}$ be cluster-positive. Then, we have $p,q,r \geq 2$.
\end{lem}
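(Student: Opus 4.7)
The plan is to prove the contrapositive: if some coordinate of $S=(p,q,r)$ is strictly less than $2$, then $S$ is not cluster-positive. By the cyclic symmetry of the three indices we may assume $0<r<2$. We will fix this $r$ throughout and use only $\gamma_1$ and $\gamma_2$, which preserve the third coordinate.

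Define a bi-infinite sequence $(p_n)_{n\in\mathbb{Z}}$ by $p_0=p$, $p_1=q$, and the linear recurrence $p_{n+1}=r\,p_n-p_{n-1}$. A direct computation shows that iterating $\gamma_1,\gamma_2,\gamma_1,\gamma_2,\ldots$ starting from $(p,q,r)$ produces the tuples $(p_2,p_1,r)$, $(p_2,p_3,r)$, $(p_4,p_3,r)$, $(p_4,p_5,r),\ldots$, while starting instead with $\gamma_2$ gives the negative-index analogues. Both index sequences are alternating, so every $p_n$ appears as a coordinate of some element of $\Gamma(S)$. Hence, if any $p_n\leq 0$, then $S$ fails to be cluster-positive.

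Since $r\in(0,2)$, write $r=2\cos\theta$ with $\theta\in(0,\pi/2)$. The characteristic roots of the recurrence are $e^{\pm i\theta}$, so $p_n=C\cos(n\theta-\phi)$ for some amplitude $C\geq 0$ and phase $\phi\in\mathbb{R}$; the condition $p_0=p>0$ forces $C>0$. It thus suffices to find $n\in\mathbb{Z}$ with $n\theta-\phi\in(\pi/2,3\pi/2)\pmod{2\pi}$. The set $H=\{n\theta\bmod 2\pi:n\in\mathbb{Z}\}$ is a subgroup of $\mathbb{R}/2\pi\mathbb{Z}$: it is dense when $\theta/2\pi\notin\mathbb{Q}$, and otherwise is a finite cyclic group of some order $m$. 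A dense subgroup meets every nonempty open arc. In the finite case, $\theta\in(0,\pi/2)$ excludes $m\in\{1,2\}$, so $m\geq 3$; and $m$ equally-spaced points on a circle of circumference $2\pi$ cannot all lie in an arc of length $\pi$, since $m$ consecutive such points span $(m-1)\cdot 2\pi/m\geq 4\pi/3>\pi$. Thus the open complementary arc of length $\pi$ contains a point of $H$, giving the required $n$.

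The main obstacle is the finite-orbit case: unlike the dense case, one must invoke the precise geometric pigeonhole above to guarantee that the orbit enters the negative-cosine semicircle. This also reflects why the threshold $r<2$ is sharp: at $r=2$ one has $\theta=0$, the recurrence becomes arithmetic rather than oscillatory, no obstruction of this form exists, and indeed $p=q=r=2$ is admissible in the conclusion of the lemma.
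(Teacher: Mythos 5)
Your proof is correct, and its skeleton coincides with the paper's: both pass to the contrapositive, reduce (by symmetry) to the case $r<2$, restrict to the alternating $\gamma_1,\gamma_2$-orbit, which fixes $r$ and makes the varying coordinates obey the three-term recurrence $p_{n+1}=rp_n-p_{n-1}$, and substitute $r=2\cos\theta$ with $\theta\in(0,\pi/2)$. The divergence is in how a negative term is extracted. The paper (Proposition~\ref{prop: r<2}, following \cite{FT19}) writes the $n$-th term as $\frac{1}{\sin\theta}\bigl(q\sin(n+1)\theta-p\sin n\theta\bigr)$ and simply takes the first $n$ with $(n+1)\theta\geq\pi$; since $\theta<\pi/2$ this forces $\sin(n+1)\theta\leq 0<\sin n\theta$, and positivity of \emph{both} $p$ and $q$ gives a strictly negative entry immediately, with no case analysis. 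You instead pass to the amplitude--phase form $p_n=C\cos(n\theta-\phi)$, $C>0$, and show that the subgroup $H=\{n\theta\bmod 2\pi\}$ meets the open semicircle where the cosine is negative, splitting into the dense case and the finite-cyclic case. Your route is sound: the pigeonhole bound $(m-1)\cdot 2\pi/m\geq 4\pi/3>\pi$ for $m\geq 3$ does close the finite case, and your argument has the mild advantage of using only $p_0\neq 0$ rather than positivity of both initial terms. The cost is the rationality dichotomy on $\theta/2\pi$, which the paper's ``first crossing of $\pi$'' choice of $n$ avoids entirely.
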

\begin{lem}\label{lem: 2.1}
Let $S \in \mathcal{S}^{+}$ be cluster-positive. Then, we have $C(S) \leq 4$.
\end{lem}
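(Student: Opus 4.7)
The plan is to argue by contradiction. Suppose $S=(p,q,r)\in\mathcal{S}^{+}$ is cluster-positive with $c:=C(S)>4$; the goal is to exhibit an iterated mutation that drives some coordinate to zero or below, contradicting cluster-positivity. Throughout, every $S'\in\Gamma(S)$ satisfies $C(S')=c$ by Proposition~\ref{prop: relationship between B and S}(c) and has all three coordinates at least $2$ by Lemma~\ref{lem: 2.2}.

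The first tool is Vieta jumping. Fixing the coordinates $(p,q)$ of any orbit element, the two admissible values of the third coordinate---the current $r$ and its $\gamma_{3}$-image $pq-r$---are precisely the roots of $R^{2}-pqR+(p^{2}+q^{2}-c)=0$, which $\gamma_{3}$ swaps. For both roots to be strictly positive the product $p^{2}+q^{2}-c$ must be positive, so cluster-positivity forces $p^{2}+q^{2}>c$, $q^{2}+r^{2}>c$, and $p^{2}+r^{2}>c$ at every orbit element.

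Next, I would carry out a descent on $\sigma(p,q,r)=p+q+r\geq 6$. A direct calculation gives $\sigma(\gamma_{1}S)-\sigma(S)=qr-2p$ and analogous expressions for $\gamma_{2}$ and $\gamma_{3}$, so repeatedly applying whichever $\gamma_{k}$ strictly decreases $\sigma$ leads, either directly or via an accumulation-point argument using the lower bound $\sigma\geq 6$ together with the continuity of each $\gamma_{k}$, to an orbit element $S^{*}=(p^{*},q^{*},r^{*})$ satisfying the three balance conditions $q^{*}r^{*}\geq 2p^{*}$, $r^{*}p^{*}\geq 2q^{*}$, and $p^{*}q^{*}\geq 2r^{*}$. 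Multiplying these yields $p^{*}q^{*}r^{*}\geq 8$. A first easy consequence is that $S^{*}$ cannot have any coordinate equal to $2$ when $c>4$: if, say, $r^{*}=2$, the balance inequalities force $p^{*}=q^{*}$, and then $C(S^{*})=(p^{*}-q^{*})^{2}+4=4$, contradicting $c>4$. So in fact $p^{*},q^{*},r^{*}>2$.

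The final and most delicate step is to upgrade the balance conditions---which alone only yield $c\leq p^{*}q^{*}r^{*}/2$---into the sharp bound $c\leq 4$. The natural framework is supplied by Section~\ref{sec: 1,2-orbit}: the restriction of $\gamma_{1}\gamma_{2}$ to the slice $r=r^{*}$ is a volume-preserving linear map whose characteristic polynomial is $\lambda^{2}-(r^{*2}-2)\lambda+1$, with real distinct roots exactly when $r^{*}>2$; we just saw that this hyperbolic regime is forced by $c>4$. Forward iteration then drives $(p,q)$ along the unstable direction of the conic $p^{2}-pqr^{*}+q^{2}=c-r^{*2}$. The main obstacle is to show that this drift ultimately forces $\min(p,q)$ below any preassigned level, and in particular below $2$, violating the universal bound supplied by Lemma~\ref{lem: 2.2}; equivalently, the only way to sustain cluster-positivity in the hyperbolic regime is to lie on the stable manifold of $\gamma_{1}\gamma_{2}$, which, together with the cyclic analogues for $\gamma_{2}\gamma_{3}$ and $\gamma_{1}\gamma_{3}$, pins $S^{*}$ to the surface $c=4$, contradicting $c>4$ and completing the proof.
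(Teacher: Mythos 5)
Your first two steps are sound in outline and essentially reproduce the paper's reduction: the ``balance conditions'' $q^*r^*\geq 2p^*$, $r^*p^*\geq 2q^*$, $p^*q^*\geq 2r^*$ are exactly the condition (M1) of Definition~\ref{dfn: Mk conditions}, and the descent on $\sigma$ is the paper's passage from an arbitrary cluster-positive $S$ to the (M1) representative $L(S)$ (Lemmas~\ref{lem: M3}, \ref{lem: M1}, \ref{lem: convergence}). Two caveats there: when the descent does not terminate, the accumulation point is \emph{not} an orbit element, and it is not automatic that it satisfies the balance conditions --- each $S_i$ fails at least one of them, and the limit a priori only satisfies the \emph{reversed} inequalities for the largest coordinates; showing the limit is $(2,2,2)$ (whence $c=4$, a contradiction) is the content of Lemma~\ref{lem: convergence} and requires the Chebyshev analysis of Proposition~\ref{prop: limit theorem}. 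Your ``no coordinate equal to $2$'' observation is correct.

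The fatal gap is the final step, which is where the inequality actually has to be proved. From a point $S^*$ satisfying all three balance conditions with coordinates $\geq 2$, \emph{every} alternating mutation sequence is monotonically increasing (this is Lemma~\ref{lem: M1}~(c), and it follows from your own balance conditions by the same induction), so the whole orbit is bounded below by $S^*>(2,2,2)$. In particular forward iteration of $\gamma_1\gamma_2$ on the slice $r=r^*$ moves $(p,q)$ \emph{up} along the hyperbola, never driving $\min(p,q)$ below $2$; the hyperbolic ``escape to negativity'' you hope for does not occur, and no contradiction with Lemma~\ref{lem: 2.2} can be extracted. Indeed, if a balance point with $c>4$ existed it would automatically be cluster-positive by Lemma~\ref{lem: M1}~(d), so the contradiction cannot come from the dynamics at all: it must come from the nonexistence of such a point. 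That is a pointwise statement about the region $\left\{p\geq q\geq r\geq 2,\ qr\geq 2p\right\}$, and the paper finishes with an elementary optimization there (Lemma~\ref{lem: 2.1.2}): for fixed $q,r$ the quadratic $p\mapsto C(p,q,r)$ on $q\leq p\leq \tfrac{1}{2}qr$ is maximized at $p=q$, then $C(q,q,r)\leq C(r,r,r)$, and
\begin{equation}
C(r,r,r)=3r^2-r^3=4-(r+1)(r-2)^2\leq 4\qquad(r\geq 2).
\end{equation}
You would need to replace your dynamical argument by something of this kind to close the proof.
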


\section{Proof of Lemma~\ref{lem: 2.3}} \label{sec: proof of lemma 2.3}
Here, we show Lemma~\ref{lem: 2.3}. The essential part is the following.
\begin{lem}\label{lem: p,q,r >=2}
Let $S=(p,q,r) \in \mathcal{S}^{+}$ satisfy $q,r \geq 2$ and $C(S) \leq 4$. Then, we have $p \geq 2$.
\end{lem}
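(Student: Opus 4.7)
The plan is to view the condition $C(S) \le 4$ as a quadratic inequality in the single variable $p$, with $q,r$ treated as parameters, and then show that the smaller root of this quadratic is already $\ge 2$.

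Concretely, I would first rewrite $C(S) \le 4$ as
\begin{equation*}
p^2 - (qr)\,p + (q^2+r^2-4) \le 0.
\end{equation*}
The discriminant of this quadratic (in $p$) is
\begin{equation*}
(qr)^2 - 4(q^2+r^2-4) = q^2r^2-4q^2-4r^2+16 = (q^2-4)(r^2-4),
\end{equation*}
which is nonnegative by the hypothesis $q,r\ge 2$. Hence $p$ must lie in the closed interval
\begin{equation*}
\left[\frac{qr-\sqrt{(q^2-4)(r^2-4)}}{2},\ \frac{qr+\sqrt{(q^2-4)(r^2-4)}}{2}\right].
\end{equation*}

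The second step is to check that the lower endpoint is at least $2$, i.e.\ $qr - \sqrt{(q^2-4)(r^2-4)}\ge 4$. Since $q,r\ge 2$ gives $qr\ge 4$, both sides of the equivalent inequality $qr-4\ge \sqrt{(q^2-4)(r^2-4)}$ are nonnegative, so squaring is valid. After expansion, the inequality reduces to
\begin{equation*}
(qr-4)^2 - (q^2-4)(r^2-4) = -8qr+4q^2+4r^2 = 4(q-r)^2 \ge 0,
\end{equation*}
which is trivial. Combining with the previous step yields $p\ge 2$.

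I do not expect any real obstacle here; the only care needed is to verify that both sides of the inequality we square are nonnegative (which follows at once from $q,r\ge 2$) so that the squaring step is reversible. Everything else is an elementary manipulation that ultimately collapses onto the identity $(q-r)^2\ge 0$, which is also where the bound $p\ge 2$ is sharp (attained when $q=r$ and the discriminant is pushed to its maximum). This completes the proposed proof of Lemma~\ref{lem: p,q,r >=2}.
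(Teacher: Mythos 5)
Your proposal is correct and follows essentially the same route as the paper: the paper completes the square in $p$ where you invoke the quadratic formula, but both arguments isolate the same lower bound $\frac{1}{2}\left(qr-\sqrt{(q^2-4)(r^2-4)}\right)$ for $p$ and reduce the claim to the identity $(qr-4)^2-(q^2-4)(r^2-4)=4(q-r)^2\ge 0$. No gaps.
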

\begin{proof}
We have
\begin{equation}
C(S)=p^2-pqr+q^2+r^2=\left(p-\frac{1}{2}qr\right)^2-\frac{1}{4}q^2r^2+q^2+r^2.
\end{equation}
Thus, the inequality $C(S) \leq 4$ turns into
\begin{equation}
\left(p-\frac{1}{2}qr\right)^2 \leq \frac{1}{4}q^2r^2-q^2-r^2+4=\frac{1}{4}(q^2-4)(r^2-4).
\end{equation}
Since the last term is non-negative, this inequality implies
\begin{equation}
p-\frac{1}{2}qr \geq -\frac{1}{2}\sqrt{(q^2-4)(r^2-4)}.
\end{equation}
In particular, $f(q,r)=\frac{1}{2}(qr-\sqrt{(q^2-4)(r^2-4)})$ is a lower bound of $p$. Thus, it suffices to show that $f(q,r) \geq 2$ for any $q,r \geq 2$. The inequality $f(q,r) \geq 2$ is equivalent to
\begin{equation}
qr-4 \geq \sqrt{(q^2-4)(r^2-4)}.
\end{equation}
Since both sides are non-negative, it is equivalent to $(qr-4)^2 \geq (q^2-4)(r^2-4)$. Then, we have
\begin{equation}
(qr-4)^2-(q^2-4)(r^2-4)=4q^2+4r^2-8qr=4(q-r)^2 \geq 0.
\end{equation}
It implies $f(q,r) \geq 2$ as we said above. This completes the proof.
\end{proof}
By using this lemma, we may show Lemma~\ref{lem: 2.3} inductively.
\begin{proof}[Proof of Lemma~\ref{lem: 2.3}]
Let $S=(p,q,r) \in \mathcal{S}^{+}$ satisfy $p,q,r \geq 2$ and $C(S) \leq 4$ and set $S'=(p',q',r')=\gamma_k(S)$. By definition of $\gamma_k$, all entries other than $k$th one do not change. In particular, they are larger or equal to $2$. By Proposition~\ref{prop: relationship between B and S}~(c), $C(S')=C(S) \leq 4$ holds. Thus, the $k$th entry of $S'$ is also larger or equal to $2$ by Lemma~\ref{lem: p,q,r >=2}. By repeating this argument, every entry of elements in $\Gamma(S)$ is larger or equal to $2$ and, in particular, positive. Thus, the claim holds. 
\end{proof}

\section{$1,2$-orbits and proof of Lemma~\ref{lem: 2.2}} \label{sec: 1,2-orbit}
The contents in this section have already appeared in \cite[Lem.~3.3]{FT19}.
\begin{dfn}
Let $r$ be an indeterminate. Then, define polynomials $u_n(r)$\ ($n \in \mathbb{Z}_{\geq -2}$) recursively as follows:
\begin{equation}
\begin{aligned}
u_{-2}(r)&=-1,\ u_{-1}(r)=0,\\
u_{n+1}(r)&=ru_{n}(r)-u_{n-1}(r)\quad(n \geq -1).
\end{aligned}
\end{equation}
\end{dfn}
Note that $u_{0}(r)=1$ and $u_1(r)=r$. These polynomials are closely related to the Chebyshev polynomials $U_n(r)$ of the second kind, which are obtained as the following recursions:
\begin{equation}
\begin{aligned}
U_{-2}(r)&=-1,\ U_{-1}(r)=0,\\
U_{n+1}(r)&=2rU_{n}(r)-U_{n-1}(r)\quad(n \geq -1).
\end{aligned}
\end{equation}
Note that $U_{0}(r)=1$ and $U_1(r)=2r$. In particular, we have
\begin{equation}\label{eq: relation between u and U}
u_n(r)=U_n\left(\frac{r}{2}\right).
\end{equation}
For any triplet $S=(p,q,r) \in \mathcal{S}$ and integer $n \geq -1$, set $f_n(S)=qu_{n}(r)-pu_{n-1}(r)$. Then, we have $f_{-1}(S)=p$ and $f_0(S)=q$. Moreover, $f_n(S)$ obeys the following recursion:
\begin{equation}
f_{n+1}(S)=rf_n(S)-f_{n-1}(S).
\end{equation}
\begin{dfn}
Let $S \in \mathcal{S}^{+}$. We define a collection $\Gamma_{1,2}(S)=\{\gamma_t(S)\}_{t}$ indexed by a sequence $t=(t_1,\dots,t_l)$ of $1,2$ such that $t_i \neq t_{i+1}$ for any $i=1,\dots, l-1$.
\end{dfn}
Every element of $\Gamma_{1,2}(S)$ may be expressed as follows:
\begin{prop}[{\cite[Lem.~3.3]{FT19}}]\label{prop: fn expression}
Fix $S=(p,q,r)$, and let $p_n$ and $q_n$\ ($n \geq 0$) be
\begin{equation}
(p_n,q_n,r)=\begin{cases}
(\gamma_2\gamma_1)^{\frac{n}{2}}(S) & \textup{$n$ is even},\\
\gamma_1(\gamma_2\gamma_1)^{\frac{n-1}{2}}(S) & \textup{$n$ is odd}.
\end{cases}
\end{equation}
Then, for any $n \in \mathbb{Z}_{\geq 0}$, we have
\begin{equation}\label{eq: pn,qn}
p_n=\begin{cases}
f_{n-1}(S) & \textup{$n$ is even},\\
f_n(S) & \textup{$n$ is odd},
\end{cases}
\quad
q_n=\begin{cases}
f_{n}(S) & \textup{$n$ is even},\\
f_{n-1}(S) & \textup{$n$ is odd}.
\end{cases}
\end{equation}
\end{prop}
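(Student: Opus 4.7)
The plan is to prove the formulas by induction on $n \geq 0$, using only the definitions of $\gamma_1, \gamma_2$ from (\ref{eq: gamma on S}) together with the three-term recursion $f_{n+1}(S) = r f_n(S) - f_{n-1}(S)$ and the initial values $f_{-1}(S) = p$, $f_0(S) = q$ noted right before the proposition.

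The base case $n = 0$ is immediate: $(\gamma_2 \gamma_1)^0(S) = S$, so $p_0 = p = f_{-1}(S)$ and $q_0 = q = f_0(S)$, which matches the even-$n$ formula.

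For the inductive step, I would split on the parity of $n$. Observe that $(p_{n+1}, q_{n+1}, r)$ is obtained from $(p_n, q_n, r)$ by applying $\gamma_1$ when $n$ is even (since the next element is $\gamma_1 (\gamma_2\gamma_1)^{n/2}(S)$) and by applying $\gamma_2$ when $n$ is odd (since $(\gamma_2\gamma_1)^{(n+1)/2}(S) = \gamma_2 \gamma_1 (\gamma_2\gamma_1)^{(n-1)/2}(S)$). In the first case the inductive hypothesis gives $p_n = f_{n-1}(S)$ and $q_n = f_n(S)$, and $\gamma_1(p_n, q_n, r) = (q_n r - p_n, q_n, r)$ yields
\[
p_{n+1} = r f_n(S) - f_{n-1}(S) = f_{n+1}(S), \qquad q_{n+1} = f_n(S),
\]
which is the formula for the odd index $n+1$. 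In the second case $p_n = f_n(S)$ and $q_n = f_{n-1}(S)$, so $\gamma_2(p_n, q_n, r) = (p_n, r p_n - q_n, r)$ gives
\[
p_{n+1} = f_n(S), \qquad q_{n+1} = r f_n(S) - f_{n-1}(S) = f_{n+1}(S),
\]
which is the formula for the even index $n+1$. This closes the induction.

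The argument is essentially routine; there is no substantive obstacle, only the bookkeeping of which of $\gamma_1, \gamma_2$ is applied at each stage and how the $f$-indices shift between even and odd $n$. The apparent asymmetry in the formulas (the roles of $p_n$ and $q_n$ swap with parity) is precisely what is forced by alternating $\gamma_1$ and $\gamma_2$, since each $\gamma_k$ modifies only the $k$-th coordinate while leaving the other one unchanged.
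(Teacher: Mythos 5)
Your proof is correct and takes the same route as the paper, which simply states that the formula ``may be checked by induction on $n$'' without writing out the details; your write-up supplies exactly that induction, with the base case and the parity-dependent application of $\gamma_1$ or $\gamma_2$ matching the recursion $f_{n+1}(S)=rf_n(S)-f_{n-1}(S)$.
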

We may check it by the induction on $n$. By using this property, entries of $\Gamma_{1,2}(S)$ are closely related to the Chebyshev polynomials.
The following properties are well known.
\begin{prop}[{e.g.~\cite[(1.4), (1.33b)]{HM03}}]\label{prop: chebyshev polynomial}
For any $n \geq 0$ and $\theta \in \mathbb{R}$, we have $\sin (n+1)\theta=\sin \theta U_n(\cos \theta)$ and $\sinh (n+1)\theta = \sinh \theta U_n(\cosh \theta)$. Moreover, we have $U_n(1)=n+1$.
\end{prop}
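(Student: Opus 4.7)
The plan is to prove all three claims by induction on $n$, matched against the defining recursion $U_{n+1}(r)=2rU_n(r)-U_{n-1}(r)$ with $U_{-1}(r)=0$, $U_0(r)=1$. The common mechanism is to identify the right-hand sequences (indexed by $n$) as satisfying the very same linear recursion with the same initial values.

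First I would handle the trigonometric identity. Fix $\theta\in\mathbb{R}$ and, when $\sin\theta\neq 0$, define $T_n=\sin((n+1)\theta)/\sin\theta$. The sum-to-product formula
\begin{equation*}
\sin((n+2)\theta)+\sin(n\theta)=2\cos\theta\,\sin((n+1)\theta)
\end{equation*}
yields $T_{n+1}=2\cos\theta\, T_n-T_{n-1}$, with $T_{-1}=0$ and $T_0=1$. Hence $T_n=U_n(\cos\theta)$, i.e.\ $\sin((n+1)\theta)=\sin\theta\, U_n(\cos\theta)$, and the identity extends to all $\theta$ by continuity (both sides are polynomials in $\sin\theta,\cos\theta$, and vanish together when $\sin\theta=0$).

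The hyperbolic case is entirely parallel: the identity
\begin{equation*}
\sinh((n+2)\theta)+\sinh(n\theta)=2\cosh\theta\,\sinh((n+1)\theta)
\end{equation*}
shows that $\sinh((n+1)\theta)/\sinh\theta$ (for $\theta\neq 0$) satisfies the same recursion with the same initial conditions as $U_n(\cosh\theta)$, which proves $\sinh((n+1)\theta)=\sinh\theta\, U_n(\cosh\theta)$, again extending to $\theta=0$ by continuity. Finally, taking $\theta\to 0$ in either identity gives $U_n(1)=\lim_{\theta\to 0}\sin((n+1)\theta)/\sin\theta=n+1$; alternatively, one checks $U_0(1)=1$, $U_1(1)=2$, and verifies by induction that the sequence $n+1$ solves $a_{n+1}=2a_n-a_{n-1}$.

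There is essentially no obstacle here, since all three statements are classical. The only point that requires a moment of care is verifying the sum-to-product identities on which the induction rests, and noting that the extension from $\sin\theta\neq 0$ (resp.\ $\sinh\theta\neq 0$) to all $\theta$ is automatic by continuity of polynomial expressions.
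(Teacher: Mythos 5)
Your proof is correct and complete: identifying $\sin((n+1)\theta)/\sin\theta$ and $\sinh((n+1)\theta)/\sinh\theta$ as solutions of the recursion $a_{n+1}=2\cos\theta\,a_n-a_{n-1}$ (resp.\ with $\cosh\theta$) with initial values $a_{-1}=0$, $a_0=1$ is exactly the standard verification, and your handling of the degenerate cases $\sin\theta=0$, $\theta=0$ and of $U_n(1)=n+1$ is sound. The paper itself offers no proof here --- it cites these as classical facts from the reference [HM03] --- so there is nothing to compare against; your argument simply supplies the routine details the paper omits.
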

We give some fundamental properties of $f_n(S)$.
\begin{prop}[{\cite[Lem.~3.3]{FT19}}]\label{prop: r<2}
Let $S=(p,q,r) \in \mathcal{S}^{+}$. If $r<2$, then there is a negative entry in $\Gamma_{1,2}(S)$.
\end{prop}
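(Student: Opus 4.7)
My plan is to recast $u_n(r)$ trigonometrically and then exploit oscillation. Since $p, q, r > 0$ and $r < 2$, I can write $r = 2\cos\theta$ for a unique $\theta \in (0, \pi/2)$. Combining~(\ref{eq: relation between u and U}) with Proposition~\ref{prop: chebyshev polynomial} gives $u_n(r) = \sin((n+1)\theta)/\sin\theta$. Substituting into the definition of $f_n(S)$ and applying the sum-angle identity, I expect to rewrite the result in the form
\begin{equation*}
f_n(S)\sin\theta = q\sin((n+1)\theta) - p\sin(n\theta) = R\sin(n\theta + \psi),
\end{equation*}
where $R = \sqrt{p^2 + q^2 - pqr}$ and $\psi$ is a determined phase.

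Next I would verify that the amplitude $R$ is strictly positive: from $r < 2$ together with AM-GM, $pqr < 2pq \leq p^2 + q^2$, so $R^2 > 0$. Evaluating at $n = 0$ and using $f_0(S) = q > 0$ together with $\sin\theta > 0$ forces $\sin\psi > 0$, so $\psi \in (0,\pi) \pmod{2\pi}$.

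To locate a negative $f_n(S)$, I would observe that $\{\psi + n\theta\}_{n \geq 0}$ is an arithmetic progression with positive step $\theta < \pi$ in $\mathbb{R}/2\pi\mathbb{Z}$; starting inside $(0,\pi)$, it cannot remain in $[0,\pi]$ forever. Taking $N$ to be the smallest positive integer with $\psi + N\theta > \pi$, the step bound $\theta < \pi$ ensures $\psi + N\theta < 2\pi$, so $\sin(\psi + N\theta) < 0$ and therefore $f_N(S) < 0$. Proposition~\ref{prop: fn expression} then identifies $f_N(S)$ as an entry of some element $(p_N, q_N, r) \in \Gamma_{1,2}(S)$, producing the desired negative entry. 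The main technical step is the sinusoidal reduction above; once that is in hand, the positivity of $R$ and the unavoidable excursion of $\psi + n\theta$ into $(\pi, 2\pi)$ are short consequences of $r < 2$ and $\theta \in (0, \pi/2)$ respectively.
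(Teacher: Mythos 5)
Your proposal is correct and follows essentially the same route as the paper: write $r=2\cos\theta$, convert $u_n(r)$ to $\sin((n+1)\theta)/\sin\theta$ via Proposition~\ref{prop: chebyshev polynomial}, and let the oscillation of sine force some $f_N(S)<0$, which Proposition~\ref{prop: fn expression} identifies as an entry of $\Gamma_{1,2}(S)$. The only difference is cosmetic: you package $q\sin((n+1)\theta)-p\sin(n\theta)$ as a single sinusoid $R\sin(n\theta+\psi)$ and track the phase, whereas the paper simply picks the first $n$ with $\sin((n+1)\theta)\leq 0$ but $\sin(k\theta)>0$ for $k\leq n$; both endgames are valid.
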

\begin{proof}
For the reader's convenience, we give a proof following \cite{FT19}. When $0 < r < 2$, we may express $r= 2\cos \theta$ for some $\theta \in \mathbb{R}$ with $0 < \theta <\frac{\pi}{2}$. By Proposition~\ref{prop: fn expression}, $f_n(S)=qu_n(r)-pu_{n-1}(r)$ is a entry of an element in $\Gamma(S)$. We show that there is $n \in \mathbb{Z}_{\geq 0}$ such that $f_n(S)<0$. Note that $u_n(r)=U_n(\cos \theta)=\frac{\sin (n+1)\theta}{\sin \theta}$ by (\ref{eq: relation between u and U}) and Proposition~\ref{prop: chebyshev polynomial}. By using this relation, the inequality $f_n(S)<0$ is equivalent to $q\sin (n+1)\theta < p\sin n\theta$.
Here we use $\sin \theta >0$ since $0<\theta < \frac{\pi}{2}$. We can find $n \geq 1$ such that $\sin (n+1)\theta \leq 0$ while every $k=1,\dots,n$ satifies $\sin k\theta > 0$. This $n$ satisfies $q\sin (n+1)\theta < p\sin n\theta$.
\end{proof}
By using this property, Lemma~\ref{lem: 2.2} is immediately shown as follows.
\begin{proof}[Proof of Lemma~\ref{lem: 2.2}]
More precisely, we show the contraposition of Lemma~\ref{lem: 2.2}. Suppose that $S=(p,q,r) \in \mathcal{S}^{+}$ satisfies $r < 2$. Then, by Proposition~\ref{prop: r<2}, there is a negative entry in $\Gamma_{1,2}(S) \subset \Gamma(S)$. Thus, $S$ is not cluster-positive.
\end{proof} 
For later, we show the following proposition.
\begin{prop}\label{prop: limit theorem}
Let $S=(p,q,r) \in \mathcal{S}^{+}$ with $p,q,r \geq 2$ and set $r=2\cosh \theta$ for some $\theta \in \mathbb{R}_{\geq 0}$. Let $p_n$ and $q_n$ be in (\ref{eq: pn,qn}). If $p_n$ or $q_n$ converges to some real numbers, then we have $qe^{\theta}-p=0$. Moreover,
\\
(a)\ if $r=2$, then we have $p_n=q_n=p=q$ for any $n = 0,1,2,\dots$.
\\
(b)\ if $r>2$, then both $p_n$ and $q_n$ converge to $0$.
\end{prop}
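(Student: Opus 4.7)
The plan is to express $u_n(r)$ in closed form via \eqref{eq: relation between u and U} and Proposition~\ref{prop: chebyshev polynomial}, substitute into $f_n(S) = q u_n(r) - p u_{n-1}(r)$, and then read off the asymptotics. First I would record, using Proposition~\ref{prop: fn expression}, that $p_n$ runs over $\{f_m(S) : m \text{ odd}\}$ and $q_n$ over $\{f_m(S) : m \text{ even}\}$; consequently, convergence of either $p_n$ or $q_n$ as $n \to \infty$ is exactly the convergence of $f_m(S)$ along an arithmetic-progression subsequence with $m \to \infty$. Thus the whole proposition reduces to analyzing $\lim f_m(S)$ along such a subsequence.

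For case (a), with $r = 2 = 2\cosh 0$, Proposition~\ref{prop: chebyshev polynomial} gives $u_n(2) = U_n(1) = n+1$, so
\begin{equation}
f_n(S) = q(n+1) - pn = q + n(q-p).
\end{equation}
For this linear expression to stay bounded along any subsequence of $n \to \infty$, one needs $q - p = 0$, i.e.\ $q e^{\theta} - p = 0$ at $\theta = 0$. Under this condition $f_n(S) \equiv q = p$, and the description of $p_n, q_n$ in \eqref{eq: pn,qn} then yields $p_n = q_n = p = q$ for every $n \geq 0$.

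For case (b), write $r = 2\cosh\theta$ with $\theta > 0$, so $u_n(r) = \sinh((n+1)\theta)/\sinh\theta$. Expanding the hyperbolic sines as exponentials and regrouping gives
\begin{equation}
2\sinh\theta \cdot f_n(S) = e^{n\theta}(qe^{\theta} - p) - e^{-n\theta}(qe^{-\theta} - p).
\end{equation}
Since $e^{n\theta} \to \infty$ along every subsequence $n \to \infty$ while $e^{-n\theta}$ stays bounded, convergence of $f_n(S)$ forces the leading coefficient $qe^{\theta} - p$ to vanish. Substituting $p = qe^{\theta}$ and using $qe^{-\theta} - p = -2q\sinh\theta$ collapses the right-hand side to $2\sinh\theta \cdot qe^{-n\theta}$, so $f_n(S) = qe^{-n\theta}$ for all $n$, and both $p_n$ and $q_n$ tend to $0$.

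There is no genuine obstacle; the main point requiring care is that $p_n$ and $q_n$ are each subsequences of $(f_m)$ restricted to one parity of $m$, so the dominant-term argument must be invoked along those subsequences — but since $e^{n\theta}$ diverges along any infinite subsequence, the bookkeeping is transparent and the whole argument reduces to Chebyshev's closed form plus a one-line exponential rearrangement.
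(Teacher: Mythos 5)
Your proposal is correct and follows essentially the same route as the paper: both use the closed form $u_n(r)=\sinh((n+1)\theta)/\sinh\theta$ (resp.\ $U_n(1)=n+1$ when $\theta=0$) from Proposition~\ref{prop: chebyshev polynomial}, extract the coefficient of the divergent term to force $qe^{\theta}-p=0$, and then substitute $p=qe^{\theta}$ to get $f_n(S)=qe^{-n\theta}\to 0$. The only cosmetic difference is that the paper factors out $\sinh(n\theta)/\sinh\theta$ and uses $\sinh((n+1)\theta)/\sinh(n\theta)\to e^{\theta}$, whereas you expand directly into exponentials; the substance is identical.
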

\begin{proof}
Suppose that $p_n$ converges. This is equivalent that $f_{2k+1}$\ ($k \in \mathbb{Z}_{>0}$) converges. Let $n=2k+1$. When $r=2$, it holds that $\theta=0$. Since $u_n(2)=U_n(1)=n+1$, we have $f_{n}(S)=q(n+1)-pn=(q-p)n+q$. Thus, if $f_{n}(S)$ converges, we have $q-p=qe^{0}-p=0$. By considering the maps $\gamma_1$ and $\gamma_2$, we may obtain $p_n=q_n=p=q$ for any $n \in \mathbb{Z}_{>0}$.
If $r > 2$, or equivalently, if $\theta > 0$, it holds that $\sinh n\theta >0$ for any $n \in \mathbb{Z}_{>0}$ and we have
$\frac{\sinh(n+1)\theta}{\sinh n\theta}=\frac{e^{(n+1)\theta}-e^{-(n+1)\theta}}{e^{n\theta}-e^{-n\theta}}=\frac{e^{\theta}-e^{-(2n+1)\theta}}{1-e^{-2n\theta}} \to e^{\theta}$ when $n \to \infty$. Since $u_n(r)=U_n(\cosh \theta)=\frac{\sinh (n+1)\theta}{\sinh n\theta}$, we have
\begin{equation}
\begin{aligned}
f_{n}(S)&=q\frac{\sinh(n+1)\theta}{\sinh \theta}-p\frac{\sinh n\theta}{\sinh \theta}\\
&=\frac{\sinh n\theta}{\sinh \theta}\left(q\frac{\sinh (n+1)\theta}{\sinh n\theta}-p\right).
\end{aligned}
\end{equation}
If $qe^\theta -p \neq 0$, it does not converge since $\sinh n\theta  \to \infty$ when $n \to \infty$. When $p=qe^{\theta}$, we have
\begin{equation}
\begin{aligned}
f_{n}(S)&=q\frac{\sinh(n+1)\theta}{\sinh \theta}-qe^{\theta}\frac{\sinh n\theta}{\sinh \theta}\\
&=\frac{q}{\sinh \theta}\left(\sinh (n+1)\theta - e^{\theta}\sinh\theta\right)\\
&=\frac{q}{2\sinh \theta}\left(-e^{-(n+1)\theta}+e^{(-n+1)\theta}\right)\to 0.
\end{aligned}
\end{equation}
\end{proof}

\section{(M$k$) conditions} \label{sec: Mk conditions}
Later, we will show $C(S) \leq 4$ for any cluster-positive element $S$. It was shown in \cite{FT19} by using the geometrical aspects of mutations. Here, we give an alternative proof which is more closer to that in \cite{BBH11}, althoug we need some modifications for the real case.
\par
In the previous section, for any $S \in \mathcal{S}^{+}$, $\Gamma(S)=\{\gamma_{t}(S)\}_{t}$ was defined as a collection indexed by $t$. However, in this section, we view $\Gamma(S)=\{\gamma_t(S) \mid t\}$ as a set.
\par
Let $\leq$ be a partial order on $\mathcal{S}$ such that every corresponding entry satisfies this inequality on $\mathbb{R}$.
\begin{dfn}\label{dfn: Mk conditions}
For any $S=(p_1,p_2,p_3) \in \mathcal{S}$ and $k=1,2,3$, we say that $S$ satisfies the condition (M$k$) if $S$ satisfies the following:
\begin{itemize}
\item[(M1)] For any $i=1,2,3$, it holds that $S \leq \gamma_i(S)$.
\item[(M2)] For presicely two indices $i$ in $\{1,2,3\}$, it holds that $S \leq \gamma_i(S)$.
\item[(M3)] Otherwise, namely, at most one index $i$ in \{1,2,3\} satisfies $S \leq \gamma_{i}(S)$.
\end{itemize}
\end{dfn}
Acyclic matrices are closely related to (M3) as follows.
\begin{lem}\label{lem: negative lemma}
Let $S=(p,q,r) \in \mathcal{S}$. If $p,q >0$ and $r \leq 0$, then $S$ satisfies (M3).
\end{lem}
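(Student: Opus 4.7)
The plan is to unpack the condition $S \leq \gamma_i(S)$ into scalar inequalities and then read off the signs. Since the order on $\mathcal{S}$ is defined componentwise and $\gamma_i$ fixes every entry except the $i$th one, the inequality $S \leq \gamma_i(S)$ is equivalent to comparing only the $i$th coordinate of $S$ with the $i$th coordinate of $\gamma_i(S)$. Using the formulas in (\ref{eq: gamma on S}), this gives
\begin{equation}
S \leq \gamma_1(S) \iff qr \geq 2p, \quad S \leq \gamma_2(S) \iff rp \geq 2q, \quad S \leq \gamma_3(S) \iff pq \geq 2r.
\end{equation}

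Next I would substitute the sign hypotheses $p,q>0$ and $r\leq 0$. Then $qr \leq 0 < 2p$, so the first inequality fails, and likewise $rp \leq 0 < 2q$, so the second inequality fails. The third inequality $pq \geq 2r$ may or may not hold (it does hold here, since $pq > 0 \geq 2r$), but the crucial point for (M3) is that the first two indices do not satisfy $S \leq \gamma_i(S)$.

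Therefore at most one index $i\in\{1,2,3\}$ (indeed exactly $i=3$) satisfies $S\leq \gamma_i(S)$, which is precisely condition (M3) from Definition~\ref{dfn: Mk conditions}. There is no serious obstacle in this argument; it is a direct verification that relies only on unfolding the definitions and using that a nonpositive number cannot dominate a strictly positive one.
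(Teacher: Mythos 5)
Your proof is correct and follows essentially the same route as the paper: both arguments reduce $S \leq \gamma_i(S)$ to a single-coordinate comparison (the paper writes $qr-p<0<p$ and $pr-q<0<q$, which is the same as your $qr\geq 2p$ and $rp\geq 2q$ failing) and conclude that indices $1$ and $2$ fail, so (M3) holds. No gaps.
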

\begin{proof}
We have
\begin{equation}
\gamma_1(S)=(qr-p,q,r),\  \gamma_2(S)=(p,pr-q,r).
\end{equation}
Then, since $qr-p<0<p$ and $pr-q<0<q$, this $S$ satisfies $S \nleq \gamma_1(S)$ and $S \nleq \gamma_2(S)$. It implies that $S$ satisfies (M3).
\end{proof}

Under the assumption of an order among $p$, $q$, and $r$, we may identify these conditions (M$k$) as follows.

\begin{prop}\label{prop: charac. of M}
Let $S=(p,q,r) \in \mathcal{S}^{+}$ satisfy $p \geq q \geq r$.
\\
(1)\ A triplet $S$ satisfies (M1) if and only if $S$ satisfies $S \leq \gamma_1(S)$, or equivalently, $qr \geq 2p$.\\
(2)\ A triplet $S$ satisfies (M2) if and only if $2p^2>pqr\geq 2q^2$. In this case, $S$ satisfies $S \nleq \gamma_1(S)$ and $S \leq \gamma_2(S),\gamma_3(S)$.\\
(3)\ A triplet $S$ satisfies (M3) if and only if $2q > pr$. In this case, $S$ satisfies $S \nleq \gamma_1(S),\gamma_2(S)$.
\end{prop}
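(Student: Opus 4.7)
The plan is to translate the three relations $S\leq\gamma_i(S)$ into simple inequalities among $p,q,r$, and then observe that under the assumption $p\geq q\geq r>0$ these inequalities form a totally ordered chain. This immediately reduces each (M$k$) condition to a single inequality.

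First I would unpack the definitions. Because $\gamma_k$ changes only the $k$th entry of $S$, the comparison $S\leq\gamma_k(S)$ reduces to a single scalar inequality, namely
\begin{equation*}
S\leq\gamma_1(S)\iff 2p\leq qr,\quad S\leq\gamma_2(S)\iff 2q\leq pr,\quad S\leq\gamma_3(S)\iff 2r\leq pq.
\end{equation*}
Call these (a), (b), (c). The key observation is then the chain of implications
\begin{equation*}
\text{(a)}\Longrightarrow\text{(b)}\Longrightarrow\text{(c)}
\end{equation*}
under the hypothesis $p\geq q\geq r>0$. For (a)$\Rightarrow$(b), from $p\geq q$ we get $pr\geq qr\geq 2p\geq 2q$; for (b)$\Rightarrow$(c), from $q\geq r$ we get $pq\geq pr\geq 2q\geq 2r$. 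Both are one-line arguments and are essentially the only computational content of the proof.

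Now each case of Definition~\ref{dfn: Mk conditions} characterizes itself. For (M1): all three of (a), (b), (c) must hold, but by the chain this is equivalent to the strongest member (a), i.e.\ $qr\geq 2p$, yielding part (1). For (M2): exactly two hold, so by the chain the failing one must be (a) and the surviving two are (b) and (c); multiplying $qr<2p$ by $p$ and $pr\geq 2q$ by $q$ rewrites this as $2p^2>pqr\geq 2q^2$, which is (2), and the inequalities $S\nleq\gamma_1(S)$, $S\leq\gamma_2(S),\gamma_3(S)$ read off directly. For (M3): at most one of (a), (b), (c) holds, which by the chain forces (b) to fail, i.e.\ $pr<2q$, and in turn (a) fails; this is (3), and $S\nleq\gamma_1(S)$, $S\nleq\gamma_2(S)$ follow. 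Conversely, in each case the stated inequality implies the corresponding cardinality of the truth set of $\{\text{(a), (b), (c)}\}$ via the same chain.

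There is no real obstacle here; the only thing to take care of is to state the chain of implications precisely (in particular to keep track of the fact that $p,q,r>0$ is needed to divide the formulas $2p^2>pqr\geq 2q^2$ back into $2p>qr$ and $2q\leq pr$ without sign issues). Once the chain is in hand, the three characterizations are immediate logical consequences.
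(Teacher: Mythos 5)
Your proposal is correct and follows essentially the same route as the paper: both reduce each comparison $S\leq\gamma_i(S)$ to a single scalar inequality and exploit that, under $p\geq q\geq r>0$, these three inequalities form a nested chain (the paper phrases this by comparing $pqr$ against $2p^2\geq 2q^2\geq 2r^2$, which is your implication chain after multiplying by $p$, $q$, $r$). Your write-up just makes explicit the case analysis that the paper leaves to the reader.
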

\begin{proof}
Note that
\begin{quote}
$S \leq \gamma_1(S)$ $\Leftrightarrow$ $qr-p\geq p$ $\Leftrightarrow$ $qr \geq 2p$ $\Leftrightarrow$ $pqr \geq 2p^2$,\\
$S \leq \gamma_2(S)$ $\Leftrightarrow$ $pqr \geq 2q^2$,\\
$S \leq \gamma_3(S)$ $\Leftrightarrow$ $pqr \geq 2r^2$.
\end{quote}
Under the assumption $p \geq q \geq r > 0$, whether $S$ satisfies (M1), (M2), or (M3) is determined by where $pqr$ is in $2p^2 \geq 2q^2 \geq 2r^2$. By considering each condition, we may show Proposition~\ref{prop: charac. of M}
\end{proof}

Following a similar argument as in \cite[Lem.~2.1]{BBH11}, we will prove some fundamental properties of these classes in $\mathcal{S}^{+}$.
\par
First, we show that the condition $(M3)$ does not appear in $\Gamma(S)$ for any cluster-positive $S$.
\begin{lem}[see~{\cite[Lem.~2.1.~(c)]{BBH11}}] \label{lem: M3}
If $S=(p,q,r) \in \mathcal{S}^{+}$ satisfies (M3), then we have $\min(p,q,r) < 2$ and $S$ is not cluster-positive.
\end{lem}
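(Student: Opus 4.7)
The plan is to exploit the symmetry of the three coordinates together with the characterization of (M3) from Proposition~\ref{prop: charac. of M}. Since the conditions (M$k$) only depend on the number of indices $i$ with $S \leq \gamma_i(S)$, they are invariant under permutations of the coordinates, so I may assume without loss of generality that the three entries of $S$ are ordered. First I would relabel so that $p \geq q \geq r > 0$, so that Proposition~\ref{prop: charac. of M}~(3) applies and the condition (M3) becomes the single inequality $2q > pr$.

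Next I would extract the bound on the smallest entry. Multiplying the hypothesis $p \geq q$ by $r > 0$ gives $pr \geq qr$, and combining this with $2q > pr$ yields $2q > qr$. Since $q > 0$, this forces $r < 2$, i.e.\ $\min(p,q,r) < 2$, which is the first conclusion of the lemma.

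For the second conclusion, I would appeal directly to Proposition~\ref{prop: r<2}: as $r < 2$, the orbit $\Gamma_{1,2}(S) \subseteq \Gamma(S)$ already contains a negative entry, so $S$ cannot be cluster-positive. (Equivalently, one may invoke the contrapositive of Lemma~\ref{lem: 2.2}, which in turn rests on Proposition~\ref{prop: r<2}.) Here the symmetry argument at the start is essential, since Proposition~\ref{prop: r<2} is stated for the third coordinate; reordering guarantees that the coordinate bounded by~$2$ is the one the proposition actually applies to, up to a corresponding permutation of the $\gamma_i$'s.

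I do not anticipate a significant obstacle: the lemma reduces, via Proposition~\ref{prop: charac. of M}, to an elementary inequality manipulation, and the cluster-positivity conclusion is then immediate from the results already established in Section~\ref{sec: 1,2-orbit}. The only subtlety to be careful about is justifying the ``without loss of generality'' step, which follows from the symmetric definition of the conditions (M$k$) in Definition~\ref{dfn: Mk conditions} together with the symmetric way $\Gamma(S)$ is generated by $\gamma_1,\gamma_2,\gamma_3$.
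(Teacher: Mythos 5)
Your proposal is correct and follows essentially the same route as the paper: reduce by symmetry to $p \geq q \geq r > 0$, use Proposition~\ref{prop: charac. of M}~(3) to get $r < 2$, and conclude via Proposition~\ref{prop: r<2}. The only cosmetic difference is the algebra for $r<2$ (you use $2q > pr \geq qr$, while the paper multiplies the two inequalities $2q>pr$ and $2p>qr$), which is an equally valid and slightly leaner derivation.
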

\begin{proof}
Without loss of generality, we may assume $p \geq q \geq r>0$. By Proposition~\ref{prop: charac. of M}~(3), we have $2q>pr$ and $2p > qr$. By multiplying the corresponding sides, we have
\begin{equation}
4pq>pqr^2
\end{equation}
and it implies $4>r^2$, namely, $r<2$. By Proposition~\ref{prop: r<2}, we may verify that $S$ is not cluster-positive.
\end{proof}
By the above lemma, if $S$ is cluster-positive, then $\Gamma(S)$ consists of elements which satisfy either (M1) or (M2). In particular, (M1) gives some strong properties as follows.
\begin{lem}[see~{\cite[Lem~2.1~(a)]{BBH11}}]\label{lem: M1}
Let $S=(p,q,r) \in \mathcal{S}^{+}$ satisfy (M1). Then, the following statements hold.\\
(a)\ We have three inequalities $p,q,r \geq 2$.\\
(b)\ Every element in $\Gamma(S)$ satisfies (M2) except for $S$. Here, we see $\Gamma(S)=\{\gamma_t(S)\mid t\}$ as a set.\\
(c)\ For any sequence
\begin{equation}\label{eq: increasing sequence}
S=S_0 \overset{\gamma_{t_1}}{\mapsto} S_1 \overset{\gamma_{t_2}}{\mapsto} S_2 \overset{\gamma_{t_3}}{\mapsto} \cdots \overset{\gamma_{t_{k}}}{\mapsto} S_k=S'
\end{equation}
which satisfies $t_{i-1} \neq t_{i}$ ($i=1,2,\dots,k$), we have $S_{i-1} \leq S_{i}$. In particular, for any $S' \in \Gamma(S)$, there is a sequence (\ref{eq: increasing sequence}) with $S_{i-1} \leq S_{i}$, and $S_{i-1} \neq S_{i}$ for any $i=1,2,\dots,k$.\\
(d)\ This $S$ is the minimum element in $\Gamma(S)$ in terms of the partial order $\leq$. In particular, $S$ is cluster-positive.
\end{lem}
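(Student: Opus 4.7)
Part (a) is immediate from Proposition~\ref{prop: charac. of M}(1): after WLOG ordering $p \geq q \geq r$, condition (M1) becomes $qr \geq 2p$, so $qr \geq 2p \geq 2q$ forces $r \geq 2$; symmetrically $q \geq 2$, hence $p \geq q \geq 2$.

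My plan for (b), (c), (d) is a single induction on the length $k$ of the sequence $t = (t_1, \ldots, t_k)$ (with $t_{i-1} \neq t_i$), carrying for $S_i := \gamma_{t_i} \circ \cdots \circ \gamma_{t_1}(S)$ the two statements
\begin{itemize}
\item[(i)] $S_{j-1} \leq S_j$ for every $j \leq k$;
\item[(ii)] $S_k \leq \gamma_l(S_k)$ for every $l \neq t_k$.
\end{itemize}
The base case $k = 1$ is direct: (i) is exactly the hypothesis (M1) on $S$, and for (ii) with WLOG $t_1 = 1$, $l = 2$, $S = (p, q, r)$, the condition $S_1 \leq \gamma_2(S_1)$ reduces to $(qr - p)r \geq 2q$, which follows by chaining $qr - p \geq p$ with $pr \geq 2q$ (both supplied by (M1)).

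In the inductive step, (ii) applied to $S_{k-1}$ gives $S_{k-1} \leq \gamma_{t_k}(S_{k-1}) = S_k$ since $t_k \neq t_{k-1}$, extending (i). The main obstacle is re-establishing (ii) at $S_k$. Taking WLOG $t_{k-1} = 1$, $t_k = 2$, and $S_{k-1} = (a, b, c)$, so $S_k = (a, ac - b, c)$, the two inequalities to verify are $a(ac - b) \geq 2c$ and $(ac - b)c \geq 2a$. The first follows directly from (ii) on $S_{k-1}$ via $a(ac - b) \geq ab \geq 2c$. The second is the delicate one: from $ac \geq 2b$ one gets $ac - b \geq ac/2$, whence $(ac - b)c \geq ac^2/2$, so it suffices to show $c \geq 2$. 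This is precisely where (a) enters, combined with the monotonicity (i) already established: the third entry $c$ of $S_{k-1}$ dominates the third entry $r$ of $S$, which is at least $2$ by (a).

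Parts (c) and (d) then fall out. Statement (c) is the inductive claim (i); for the ``in particular'' clause, a standard shortening argument using $\gamma_j^2 = \mathrm{id}$ reduces any sequence to a strictly increasing one (one repeatedly excises either a single trivial step $S_{i-1} = S_i$ or a three-step loop, as appropriate, until no equalities remain). For (b), suppose $S' \in \Gamma(S) \setminus \{S\}$ and pick such a strictly increasing sequence $S < S_1 < \cdots < S_k = S'$; if $S'$ satisfied (M1), then $S_k \leq \gamma_{t_k}(S_k) = S_{k-1}$ combined with $S_{k-1} < S_k$ would yield a contradiction, so $S'$ satisfies (M2). Finally (d): by (c), $S \leq S'$ for every $S' \in \Gamma(S)$, and the entries of every such $S'$ dominate those of $S$, which are at least $2 > 0$ by (a); hence $\Gamma(S) \subseteq \mathcal{S}^+$, and $S$ is cluster-positive.
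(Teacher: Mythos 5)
Your proof is correct, and while it shares the paper's overall skeleton---induction along the mutation sequence, establishing that after each genuine step the two ``forward'' directions are non-decreasing---the way you certify those forward inequalities is genuinely different. The paper first pins down the (M$k$)-type of each intermediate $S_i$: it rules out (M1) because the backward direction strictly decreases, rules out (M3) by invoking Lemma~\ref{lem: M3} together with $(2,2,2)\leq S_i$, and then reads off from the definition of (M2) that the two remaining directions must be non-decreasing. You instead carry the explicit invariant (ii) and verify the two inequalities by direct algebra: $a(ac-b)\geq ab\geq 2c$ for the fresh direction, and $(ac-b)c\geq \tfrac{1}{2}ac^2\geq 2a$ for the direction of $t_{k-1}$, where $c\geq 2$ comes from monotonicity plus part (a). This makes your argument self-contained (no appeal to Lemma~\ref{lem: M3} or to the trichotomy until the very end) at the cost of the slightly delicate $c\geq 2$ step; the paper's version is softer but leans on the previously proved classification. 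Your shortening argument for the ``in particular'' clause of (c), using $\gamma_j^2=\mathrm{id}$, plays the role of the paper's induction on the distance from $S$ to $S'$ and is equally valid. One small presentational point: in (b), passing from ``$S'$ does not satisfy (M1)'' to ``$S'$ satisfies (M2)'' also requires excluding (M3); this is immediate from your invariant (ii), which exhibits two non-decreasing directions at $S'$, but the sentence as written skips over saying so.
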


\begin{proof}
First, we show (a). By Proposition~\ref{prop: charac. of M}~(1), we have
\begin{equation}
pqr \geq 2p^2,2q^2,2r^2
\end{equation}
For example, by multiplying both sides of two inequalities $pqr \geq 2p^2$ and $pqr \geq 2q^2$, we have $(pqr)^2\geq 4p^2q^2$, and it implies $r^2 \geq 4$ and $r \geq 2$. Similarly, we have $p,q \geq 2$.
\par
We show (b) and (c) by the induction on the distance $d$ from $S$ to $S'$. When $d=0$, namely, $S=S'$, it is obvious. Suppose that the claim holds for some $d$ and let the distance form $S$ to $S'$ be $d+1$. Then, we can find $t_{d+1}=1,2,3$ and $S_d=\gamma_{t_{d+1}}(S')$ such that the distance from $S$ to $S_d$ is $d$. By the assumption, we obtain a sequence $S,S_1,\dots,S_d$ satisfying (c) and $S_{i}$ satisfies (M2) for any $i=1,2,\dots,d$. Let $d \geq 1$. Since $S_{d-1}\leq S_{d}= \gamma_{t_{d}}(S_{d-1})$ and $S_{d-1} \neq S_{d}$, we have $S_d \nleq S_{d-1}=\gamma_{t_{d}}(S_d)$. Thus, since $S_d$ satisfies (M2) and $t_{d} \neq t_{d+1}$, we have $S_{d} \leq S_{d+1}$. This inequality also holds for $d=0$ since $S=S_0$ satisfies (M1). If $S_d = S'$, the sequence $S,S_1,\dots,S_d=S'$ satisfies (c). If $S_d \neq S'$, then the sequence $S,S_1,\dots,S_d,S'$ satisfies (c). Thus, (c) holds. By considering this sequence, $S'$ does not satisfy (M1) since $S' \nleq \gamma_{t_{d+1}}(S')$. Moreover, $S'$ does not satisfy (M3). This is because, if $S'=(p',q',r')$ satisfies (M3), then we have $\min(p',q',r') < 2$ by Lemma~\ref{lem: M3}, but it contradicts $(2,2,2) \leq S \leq S'$ by (a) and (c).
\par
Last, we show (d). By (c), we have $S \leq S'$ for any $S' \in \Gamma(S)$. Thus, $S$ is the smallest element of $\Gamma(S)$, and it is cluster-positive. 
\end{proof}

By combining these results, we can give two classes of cluster-positive elements.
\begin{dfn}\label{dfn: A, B conditions}
For any cluster-positive element $S \in \mathcal{S}^{+}$, we define the following conditions.
\begin{itemize}
\item[(A)] $\Gamma(S)$ has a unique element satisfying (M1).
\item[(B)] All elements of $\Gamma(S)$ satisfy (M2).
\end{itemize}
\end{dfn}
By Lemma~\ref{lem: M3} and Lemma~\ref{lem: M1}, every cluster-positive element satisfies either (A) or (B).
For the case of (B), there is a sequence
\begin{equation}\label{eq: (M2) sequence}
S_0=S, S_1=\gamma_{t_1}(S_0), S_2=\gamma_{t_2}(S_1),\dots\quad(t_i \neq t_{i+1}\ \textup{for any $i$})
\end{equation}
such that $S_i$ satisfies (M2) and $S_{i-1} \nleq S_{i}$ for any $i$. Moreover, this sequence is uniquely determined by $S \in \mathcal{S}^{+}$.
\par
Now we identify $\mathcal{S}^{+}$ as $\mathbb{R}^3_{>0}$, and we introduce a topology on $\mathcal{S}^{+}$ by this correspondence. Then, the limit of this sequence also satisfies (M1).
\begin{lem}\label{lem: convergence}
Let $S \in \mathcal{S}^{+}$ satisfy the condition (B) and let $\{S_i\}$ be a sequence of (\ref{eq: (M2) sequence}). Then, we have the following statements.
\\
(a)\ The sequence $\{S_i\}$ converges to $(2,2,2)$.
\\
(b)\ We have $C(S)=4$.
\end{lem}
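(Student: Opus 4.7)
The plan is to show that $\{S_i\}$ is strictly decreasing in the coordinatewise partial order and bounded below by Lemma~\ref{lem: 2.2}, hence convergent, and then force the limit to be $(2,2,2)$ by proving it is simultaneously fixed by $\gamma_1$, $\gamma_2$, and $\gamma_3$. For the setup: since $S_i=\gamma_{t_i}(S_{i-1})$ changes only the $t_i$-th coordinate, the hypothesis $S_{i-1}\nleq S_i$ forces that single coordinate to strictly decrease, so $S_i < S_{i-1}$ in the partial order and in particular $S_i \neq S_{i-1}$. Every $S_i$ lies in $\Gamma(S)$ and has $\Gamma(S_i)\subseteq \Gamma(S)\subset \mathcal{S}^{+}$, so $S_i$ is cluster-positive and Lemma~\ref{lem: 2.2} gives $p_i,q_i,r_i\geq 2$. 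Each coordinate is then monotone and bounded, so $S_i\to S^\infty=(p^*,q^*,r^*)$ with $p^*,q^*,r^*\geq 2$.

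Next I would show that every $k\in\{1,2,3\}$ appears infinitely often among the $t_i$. Suppose for contradiction that some $k$ occurs only finitely often; then, after some $N$, only the other two indices---say $1$ and $2$ after relabeling---occur, and the $k$-th coordinate of $S_i$ is pinned at $r_N\geq 2$ for $i\geq N$. The tail $(S_N,S_{N+1},\ldots)$ lies in $\Gamma_{1,2}(S_N)$, and since $p_i,q_i$ are monotone and bounded they converge, so Proposition~\ref{prop: limit theorem} applies. If $r_N=2$, its part (a) gives $p_i=q_i=p_N=q_N$ for all $i\geq N$, so the tail is constant, contradicting $S_{i-1}\neq S_i$. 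If $r_N>2$, its part (b) forces $p_i,q_i\to 0$, contradicting $p_i,q_i\geq 2$.

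Therefore all three indices appear infinitely often. For each $k$, picking a subsequence $(i_m)$ with $t_{i_m}=k$ and using continuity of $\gamma_k$,
\begin{equation*}
\gamma_k(S^\infty)=\lim_{m\to\infty}\gamma_k(S_{i_m-1})=\lim_{m\to\infty}S_{i_m}=S^\infty.
\end{equation*}
Hence $S^\infty$ is simultaneously fixed by $\gamma_1$, $\gamma_2$, and $\gamma_3$, yielding $q^*r^*=2p^*$, $p^*r^*=2q^*$, $p^*q^*=2r^*$. Multiplying the first two and using $p^*,q^*>0$ gives $(r^*)^2=4$, so $r^*=2$ and then $p^*=q^*$; by the symmetric equations $p^*=q^*=r^*=2$. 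This proves (a), and (b) is immediate from continuity and the $\gamma_k$-invariance of $C$: $C(S)=C(S_i)\to C(S^\infty)=C(2,2,2)=4$.

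The hardest part I expect is excluding the scenario where only two of the three indices appear infinitely often, since otherwise the fixed-point step produces a single equation rather than three. Proposition~\ref{prop: limit theorem} is exactly the tool that rules it out, and the lower bound $\geq 2$ from Lemma~\ref{lem: 2.2} enters essentially both to invoke that proposition (so that $p_i,q_i$ really converge and the hypothesis $p,q,r\geq 2$ holds) and to contradict its $r_N>2$ conclusion.
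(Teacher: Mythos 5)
Your proof is correct. The first half — each coordinate is non-increasing and bounded below by $2$ (cluster-positivity of every $S_i$ plus Lemma~\ref{lem: 2.2}), hence $S_i$ converges to some $(p^*,q^*,r^*)\geq(2,2,2)$ — is exactly the paper's opening move, and part (b) is handled identically by continuity of $C$. Where you genuinely diverge is in identifying the limit. The paper stays with inequalities: it sorts the limit as $a\geq b\geq c$, extracts $bc\leq 2a$ and $ac\leq 2b$ from the (M2) characterization (Proposition~\ref{prop: charac. of M}) applied at finite stages and passed to the limit, deduces $c=2$ and $a=b$, and only then calls on Proposition~\ref{prop: limit theorem} to exclude the residual case $a=b>2=c$, where the index sequence is forced to alternate between two letters. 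You instead deploy Proposition~\ref{prop: limit theorem} at the outset to prove that all three indices recur infinitely often (ruling out a two-letter tail via its cases $r=2$ and $r>2$), and then obtain the three exact fixed-point equations $q^*r^*=2p^*$, $p^*r^*=2q^*$, $p^*q^*=2r^*$ by continuity of $\gamma_k$ along subsequences, solving them directly. Both arguments rest on the same two pillars (monotone convergence and the Chebyshev-type limit proposition), but your fixed-point route is more conceptual — it exhibits the limit as a common fixed point of $\gamma_1,\gamma_2,\gamma_3$, which dovetails nicely with Lemma~\ref{lem: fixed point} — at the cost of the extra recurrence-of-indices step; the paper's route trades that step for a slightly more delicate $\varepsilon$-argument with sorted coordinates.
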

The existence of the limit was shown in \cite[Thm.~5.1]{BBH11}, but we can show that this limit is $(2,2,2)$.
\begin{proof}
(a)\ First, we show that $S_i$ converges to some $L(S)=(a,b,c)$ with $a,b,c \geq 2$. Let $S_i=(p_i,q_i,r_i)$. Then, since $S_i$ is cluster-positive, we have $p_i,q_i,r_i \geq 2$. Moreover, these sequences $\{p_i\}$, $\{q_i\}$, and $\{r_i\}$ are monotonically decreasing. Thus, they converges to some real numbers $a$, $b$, and $c$, respectively. Thus, $S_i$ converges to $L(S)=(a,b,c)$. Moreover, since $p_i,q_i,r_i \geq 2$, we have $a,b,c \geq 2$. Next, we show $a=b=c=2$. Without loss of generality, we may assume $a \geq b \geq c$. Since monotonically decreasing sequences $p_i$, $q_i$, and $r_i$ converges to $a$, $b$, and $c$, respectively, for any $\varepsilon >0$, there is $N \in \mathbb{Z}_{>0}$ such that $a \leq p_i < a+\varepsilon$, $b \leq q_i < b+\varepsilon$, and $c \leq r_i < c+\varepsilon$ for any $i \geq N$. Since $S_i$ satisfies (M2), there is $i \geq N$ such that $p_i>q_i\geq r_i$. By Proposition~\ref{prop: charac. of M}~(2), we have $q_ir_i < 2p_i$. Since $p_i<a+\varepsilon$ and $bc \leq q_ir_i$, we have $bc < 2(a+\varepsilon)$. This inequality holds for arbtrary $\varepsilon>0$. By considering the limit $\varepsilon \to +0$, we have $bc \leq 2a$. Next, consider $S_{i+1}$. Since $p_i>q_i\geq r_i$ and $S_i \nleq S_{i+1}$, we need to choose $S_{i+1}=\gamma_1(S_i)$. In order $S_{i+1}$ to satisfy (M2), we need $q_{i+1}>p_{i+1},r_{i+1}$. Thus, by doing a similar argument for $S_i$, we have $ac \leq 2b$. By the two inequalities $bc \leq 2a$ and $ac \leq 2b$, we have $c^2\leq 4$ and, by $c \geq 2$, we have $c=2$. This implies that $a=b$. Now, we assume $a=b > 2=c$. Then, we have $p_i,q_i > r_i$ for enough large $i$, and we should choose the sequence of indices as $(1,2,1,2,\dots)$ or $(2,1,2,1,\dots)$. By Proposition~\ref{prop: limit theorem}, we have $p_i=q_i=a$ and $r_i=2$ because $S_i$ converges to some nonzero number $a$. However, $S_i=(a,a,2)$ satisfies (M1), and this contradicts the fact that $S_i$ satisfies (M2). Thus, we have $a=b=c=2$. 
\\
(b)\ Note that $C(S)=C(S_i)=p_i^2+q_i^2+r_i^2-p_iq_ir_i$ does not depend on $i$. Moreover, the right hand side consists of the continuous function. Thus, by considering the limit $i \to \infty$, we have $C(S)=C(2,2,2)=4$.
\end{proof}

Based on this result, we define the following map.
\begin{dfn}\label{dfn: L function}
For any cluster-positive element $S \in \mathcal{S}^{+}$, let $L(S) \in \mathcal{S}^{+}$ as follows:
\begin{itemize}
\item If $S$ satisfies (A), let $L(S)$ be the element of $\Gamma(S)$ satisfying (M1). (By Lemma~\ref{lem: M1}, this is uniquely determined by $S$.)
\item If $S$ satisfies (B), define $L(S)=(2,2,2)$.
\end{itemize}
\end{dfn}
This $L(S)$ is like an infimum of $\Gamma(S)$ for any cluster-positive element $S \in \mathcal{S}^{+}$. Note that, for any $S$ satisfying (A), then $L(S)$ is the smallest element in $\Gamma(S)$ (Lemma~\ref{lem: M1}) and satisfies (M1). We summarize propeties of $L(S)$, which was shown in Lemma~\ref{lem: M1} and Lemma~\ref{lem: convergence}.
\begin{prop}\label{prop: properties of L}
Let $S \in \mathcal{S}^{+}$ be cluster-positive. Then, we have the following properties.
\\
(a)\ We have $C(S)=C(L(S))$.
\\
(b)\ $L(S)$ satisfies (M1).
\\
(c)\ For any $S' \in \Gamma(S)$, we have $L(S) \leq S'$.
\end{prop}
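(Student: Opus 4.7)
The plan is to split according to Definition~\ref{dfn: A, B conditions} into the two mutually exclusive cases: either $S$ satisfies (A), in which case $L(S)\in\Gamma(S)$, or $S$ satisfies (B), in which case $L(S)=(2,2,2)$. In each case all three assertions (a), (b), (c) follow quickly from results already established in this section, together with the $\gamma_k$-invariance of $C$ from (\ref{eq: Markov constant of S}) and, for case (B)(c), Lemma~\ref{lem: 2.2}.

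In case (A), assertion (b) is immediate from the defining property of $L(S)$. For (a), since $L(S)$ is obtained from $S$ by a finite sequence of the $\gamma_k$'s and each $\gamma_k$ preserves $C$, we get $C(S)=C(L(S))$. For (c), I would apply Lemma~\ref{lem: M1}(d) to the triplet $L(S)$: because each $\gamma_k$ is an involution, $\Gamma(L(S))=\Gamma(S)$, so $L(S)$ is the minimum element of $\Gamma(S)$ under $\leq$.

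In case (B), assertion (a) is Lemma~\ref{lem: convergence}(b), which yields $C(S)=4=C(2,2,2)$. For (b), a direct computation from Definition~\ref{dfn: Mk conditions} suffices: $\gamma_i(2,2,2)=(2,2,2)$ for each $i=1,2,3$, so $(2,2,2)\leq\gamma_i(2,2,2)$ holds with equality, confirming (M1). For (c), pick any $S'\in\Gamma(S)$; since $\Gamma(S')\subseteq\Gamma(S)$ consists entirely of positive triplets, $S'$ is cluster-positive, and Lemma~\ref{lem: 2.2} then forces every entry of $S'$ to be $\geq 2$, which is exactly $L(S)=(2,2,2)\leq S'$.

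There is no substantial obstacle: the statement is a repackaging of Lemma~\ref{lem: M1} and Lemma~\ref{lem: convergence}. The one step deserving care is (B)(c), where one must explicitly invoke the fact that cluster-positivity is inherited by elements of $\Gamma(S)$ so that Lemma~\ref{lem: 2.2} can be applied entry-wise to $S'$.
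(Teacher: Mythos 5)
Your proposal is correct and follows essentially the same route as the paper, which presents this proposition as a summary of Lemma~\ref{lem: M1} and Lemma~\ref{lem: convergence}: split into cases (A) and (B), use Lemma~\ref{lem: M1}(d) and the $\gamma_k$-invariance of $C$ in case (A), and Lemma~\ref{lem: convergence} together with Lemma~\ref{lem: 2.2} in case (B). Your explicit observations — that $\Gamma(L(S))=\Gamma(S)$ as sets because the $\gamma_k$ are involutions, and that every $S'\in\Gamma(S)$ is itself cluster-positive so Lemma~\ref{lem: 2.2} applies — are exactly the details the paper leaves implicit.
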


\section{Proof of Lemma~\ref{lem: 2.1}} \label{sec: proof of lemma 2.1}
Let $S \in \mathcal{S}^{+}$ be cluster-positive. Then, by Proposition~\ref{prop: properties of L}, there is $L(S)$ satisfying (M1) such that $C(S)=C(L(S))$. Thus, to show Lemma~\ref{lem: 2.1}, it suffices to show the following statement.
\begin{lem}\label{lem: 2.1.1}
If $S \in \mathcal{S}^{+}$ satisfies (M1), we have $C(S) \leq 4$.
\end{lem}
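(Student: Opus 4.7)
The plan is to show that the maximum of $C$ over the (M1) region is attained at $(2,2,2)$ and equals $4$, via three nested one-variable monotonicity arguments. The underlying intuition is that every component of $\nabla C=(2p-qr,\,2q-pr,\,2r-pq)$ is non-positive on (M1), so $C$ is non-increasing in each coordinate; pushing each coordinate down to its smallest admissible value drives us to $(2,2,2)$.

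First, I reduce to an ordered case. Both $C$ and the condition (M1) are invariant under permutations of the entries of $S=(p,q,r)$, so I may assume $p\geq q\geq r$. Under this ordering, Proposition~\ref{prop: charac. of M}(1) tells us (M1) is equivalent to the single inequality $qr\geq 2p$, while Lemma~\ref{lem: M1}(a) gives $p,q,r\geq 2$.

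Second, I perform three successive substitutions. Viewing $C$ as an upward-opening quadratic in its first argument, with vertex at $qr/2\geq p$, the function is non-increasing on $[q,p]$, so
\[ C(p,q,r)\leq C(q,q,r)=2q^{2}+r^{2}-q^{2}r. \]
Differentiating the right side in $q$ yields $2q(2-r)\leq 0$ because $r\geq 2$, so pushing $q$ down to $r$ gives
\[ C(q,q,r)\leq C(r,r,r)=r^{2}(3-r). \]
Finally, $\tfrac{d}{dr}\bigl[r^{2}(3-r)\bigr]=3r(2-r)\leq 0$ for $r\geq 2$, so $C(r,r,r)\leq C(2,2,2)=4$. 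Chaining the three inequalities yields $C(S)\leq 4$.

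The main obstacle I anticipate is just careful bookkeeping: at each stage I must confirm that the monotonicity holds along the entire segment traversed, not only at the original endpoint. For step one this amounts to $qr\geq 2p'$ for every $p'\in[q,p]$, which is immediate from $qr\geq 2p$; steps two and three rely only on the persistent inequality $r\geq 2$. No further case analysis is needed, and tracing back the equality conditions shows that $C(S)=4$ forces $p=q=r=2$, exhibiting $(2,2,2)$ as the unique maximizer.
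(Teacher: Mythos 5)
Your proof of the inequality is correct and follows essentially the same route as the paper: order the entries so that $p\geq q\geq r$, use (M1) in the form $qr\geq 2p$ together with $p,q,r\geq 2$, and chain the three reductions $C(p,q,r)\leq C(q,q,r)\leq C(r,r,r)\leq C(2,2,2)=4$ (the paper phrases the first step as a comparison of the two endpoint values of an upward-opening quadratic and the last step as the factorization $4-(3r^2-r^3)=(r+1)(r-2)^2\geq 0$, but these are cosmetic differences). One correction to your closing remark: equality $C(S)=4$ does \emph{not} force $S=(2,2,2)$. In your second step the derivative $2q(2-r)$ vanishes identically when $r=2$, so $C(q,q,2)=4$ for every $q$; the correct equality locus is $S=(p,p,2)$ with $p\geq 2$ (e.g.\ $C(3,3,2)=4$ and $(3,3,2)$ satisfies (M1)), which is exactly what the paper records in Lemma~\ref{lem: 2.1.2}~(2).
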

For later, we show the following lemma at the same time.
\begin{lem}[cf. {\cite[Lem.~3.3]{BBH11}}]\label{lem: 2.1.2}
Let $S=(p,q,r) \in \mathcal{S}^{+}$ satisfy (M1), and we assume $p \geq q \geq r$.
\\
(1)\ We have $C(S) \leq C\left(r,r,r\right)$.
\\
(2) The equality $C(S)=4$ holds if and only if $S=(p,p,2)$ for some $p \geq 2$.
\end{lem}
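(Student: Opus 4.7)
The plan is to prove Lemma~\ref{lem: 2.1.2} first, with Lemma~\ref{lem: 2.1.1} falling out as an immediate consequence. The strategy is to view $C(p,q,r) = p^2 + q^2 + r^2 - pqr$ as a function of the individual coordinates (in the order $p$, then $q$), and reduce the problem in two monotonicity steps to the ``diagonal'' value $C(r,r,r) = 3r^2 - r^3$, which we then compare with $4$ by an elementary factorisation.

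Under the assumption $p \geq q \geq r$, Proposition~\ref{prop: charac. of M}(1) tells us that (M1) is equivalent to the single inequality $qr \geq 2p$, and Lemma~\ref{lem: M1}(a) guarantees $r \geq 2$. First, fixing $q$ and $r$ and viewing $C(p,q,r)$ as a function of $p$ on $[q,\,qr/2]$, the derivative $2p - qr$ is non-positive on this interval, so $C(p,q,r) \leq C(q,q,r)$. Second, $C(q,q,r) = r^2 + q^2(2-r)$, and since $r \geq 2$ the coefficient $2-r$ is non-positive, so $C(q,q,r)$ is non-increasing in $q \in [r,\infty)$ and hence $C(q,q,r) \leq C(r,r,r) = 3r^2 - r^3$. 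This proves Lemma~\ref{lem: 2.1.2}(1), and because $4 - (3r^2 - r^3) = (r-2)^2(r+1) \geq 0$ for $r \geq 2$, we obtain $C(S) \leq 4$, proving Lemma~\ref{lem: 2.1.1}.

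For the equality statement in Lemma~\ref{lem: 2.1.2}(2), I trace back through the chain: $(r-2)^2(r+1) = 0$ with $r \geq 2$ forces $r = 2$; the second reduction becomes an equality trivially once $r = 2$; and the first reduction then forces $p = q$, since with $r = 2$ the derivative $2p - qr = 2(p-q)$ vanishes only at $p = q$, so $C$ is strictly decreasing on $(q, qr/2]$ (which here collapses to the single point $\{q\}$ anyway). Hence $C(S) = 4$ implies $S = (p,p,2)$ with $p \geq 2$. The converse is a direct check: $(p,p,2)$ satisfies (M1) (the three inequalities reduce to $2p \geq 2p$, $2p \geq 2p$, and $p^2 \geq 4$) and has Markov constant $p^2 + p^2 + 4 - 2p^2 = 4$.

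I do not expect a substantive obstacle: once the correct order of monotonicity reductions ($p$, then $q$, then evaluate at $r$) is chosen, each step reduces to checking a single sign that is handed to us by $r \geq 2$ together with (M1). The only mild subtlety is in the equality analysis, where the monotonicity of the first reduction becomes degenerate exactly at $r = 2$; but in that case the feasibility interval $[q, qr/2]$ collapses to a point, so $p = q$ is forced by the constraint itself and no extra argument is needed.
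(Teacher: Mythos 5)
Your proof is correct and follows essentially the same route as the paper: both reduce $C(p,q,r)$ to $C(q,q,r)$ by monotonicity in $p$ on $[q,\,qr/2]$, then to $C(r,r,r)$ by monotonicity in $q$ (using $2-r\leq 0$), and conclude with the factorisation $4-C(r,r,r)=(r+1)(r-2)^2$, tracing equality back to force $r=2$ and $p=q$. The only cosmetic difference is in the first step, where the paper bounds the convex quadratic in $p$ by the larger of its two endpoint values $f(q,q)$ and $f(qr/2,q)$, whereas you observe directly that the vertex $qr/2$ is the right endpoint of the feasible interval, so the function is non-increasing there; these are equivalent observations.
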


In \cite{BBH11}, they showed it for any integer triplet $p,q,r \in \mathbb{Z}$. This result is a generalization for the real case. The following proof is similar but slightly different to the one of \cite[Lem.~3.3]{BBH11}. In particular, the proof is different when we consider $2 < r <3$.

\begin{proof}
Since $S$ satisfies (M1), by Lemma~\ref{lem: M1}~(a) and Proposition~\ref{prop: charac. of M}~(1), we have $\frac{1}{2}qr \geq p \geq q \geq r \geq 2$. Now, we fix $r \geq 2$ and we see $C(S)$ as a function of $(p,q) \in \mathbb{R}^2$ satisfying the above inequalities. The region where $(p,q)$ satisfies these inequalities is illustrated in Figure~\ref{fig: region}. Set $f(p,q)=p^2+q^2+r^2-pqr$.
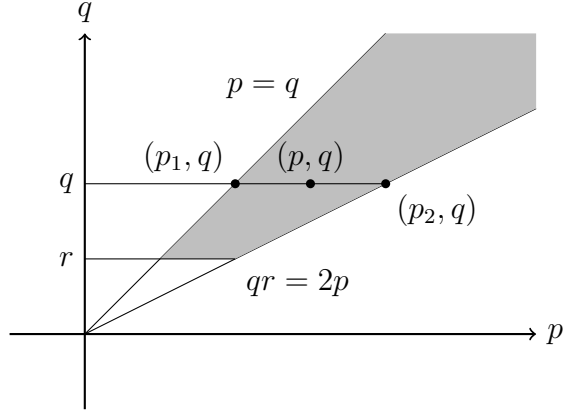
\begin{figure}[htbp]
\centering
\begin{tikzpicture}
\draw[->, thick] (-1,0)->(6,0) node [right]{$p$};
\draw[->, thick] (0,-1)->(0,4) node [above]{$q$};
\draw(0,0)--(2,1) node [below right] {$qr=2p$}--(6,3);
\draw(0,0)--(3,3) node[above left] {$p=q$}--(4,4);
\fill [lightgray] (1,1)--(2,1)--(6,3)--(6,4)--(4,4)--cycle;
\draw(0,1)node[left]{$r$}--(2,1);
\draw (0,2)node[left]{$q$}--(2,2)node[above left]{$(p_1,q)$}--(3,2) node[above]{$(p,q)$}--(4,2)node[below right]{$(p_2,q)$};
\fill (2,2)circle(0.06);
\fill (4,2)circle(0.06);
\fill (3,2)circle(0.06);
\end{tikzpicture}
\caption{The region of $(p,q)$}\label{fig: region}
\end{figure}
\par
First, we show Lemma~\ref{lem: 2.1.2}~(1). We fix $q \geq r$ and we view $f(p,q)$ as a function of $p$. Set $p_1=q$ and $p_2=\frac{1}{2}qr$. In this case, $p$ satisfies $p_1 \leq p \leq p_2$ as in Figure~\ref{fig: region}. Then, since $f(p,q)$ is a quadratic function with the positive leading coefficient $1$, we have $f(p,q) \leq \max(f(p_1,q),f(p_2,q))$.
We can show $f(p_1,q) \geq f(p_2,q)$ as follows:
\begin{equation}
\begin{aligned}
f(p_1,q)-f(p_2,q)&=(p_1)^2-(p_2)^2-(p_1qr-p_2qr)\\
&=(p_1-p_2)(p_1+p_2)-qr(p_1-p_2)\\
&=(p_1-p_2)(p_1-qr+p_2) \geq 0.
\end{aligned}
\end{equation}
The last inequality may be obtained by $p_1-p_2\leq 0$ and $p_1-qr+p_2=q-qr+\frac{1}{2}qr=-\frac{1}{2}q(r-2)\leq 0$. Thus, we have $f(p,q) \leq f(p_1,q)=f(q,q)$.
Moreover, since $f(q,q)=2q^2+r^2-q^2r=q^2(2-r)+r^2$ is a quadratic function of $q$ with the negative leading coefficient $2-r <0$ or a constant $r^2$, we have $f(q,q) \leq f(r,r)=C(r,r,r)$. (We used $q \geq r>0$.) Thus, we have $C(S) \leq C(r,r,r)$.
\par
Next, we show Lemma~\ref{lem: 2.1.1}. By Lemma~\ref{lem: 2.1.2}~(1), it suffices to show that $C(r,r,r)=3r^2-r^3 \leq 4$. It may be shown as follows:
\begin{equation}
4-(3r^2-r^3)=(r+1)(r-2)^2 \geq 0.\quad(r \geq 2)
\end{equation}
\quad Last, we show Lemma~\ref{lem: 2.1.2}~(2). Since $C(S)\leq C(r,r,r) \leq 4$, we need $C(r,r,r)=4$ for $C(S)=4$. Moreover, $C(r,r,r)=4$ turns into the equality $(r+1)(r-2)^2=0$. By $r \geq 2$, we need $r=2$ to be $C(S)=4$. In this case, we have
\begin{equation}
C(S)=p^2+q^2+4-2pqr=(p-q)^2+4.
\end{equation}
Thus, $C(S)=4$ is equivalent to $p=q \geq r=2$.
\end{proof}

\section{Integer skew-symmetrizable case}\label{sec: integer skew-symmetrizable case}
Here, we consider integer skew-symmetrizable matrices and their orbits by $\gamma_k$. For this, we consider $\Gamma$ as a subgroup of $\mathrm{Aut}(\mathcal{M})$ generated by $\gamma_1$, $\gamma_2$, and $\gamma_3$.
\par Recall that $\mathcal{M}$ is defined by the set of all $M=
\left(\begin{smallmatrix}
x & y & z\\
x' & y' & z'
\end{smallmatrix}\right)$ corresponding to a skew-symmetrizable matrix $B \in \mathrm{M}_3(\mathbb{R})$ as in Defnition~\ref{dfn: definition of sets}. In this section, we consider the following subsets.
\begin{dfn}
Let $\mathcal{M}_{\mathbb{Z}}$ (resp. $\mathcal{M}^{+}_{\mathbb{Z}}$) be the set of all $M \in \mathcal{M}$ (resp. $M \in \mathcal{M}^{+}$) whose entries are integral. Moreover, we write the set of all cluster-positive elements in $\mathcal{M}_{\mathbb{Z}}$ by $\mathcal{M}_{\mathbb{Z}}^{\mathrm{CP}}$.
\end{dfn}
For the sake of simplicity, we introduce an action of the symmetric group $\mathfrak{S}_3$ on $\mathcal{M}$ as
\begin{equation}
\sigma \left(\begin{matrix}
a_1 & a_2 & a_3\\
a'_1 & a'_2 & a'_3
\end{matrix}\right)
=
 \left(\begin{matrix}
a_{\sigma^{-1}(1)} & a_{\sigma^{-1}(2)} & a_{\sigma^{-1}(3)}\\
a'_{\sigma^{-1}(1)} & a'_{\sigma^{-1}(2)} & a'_{\sigma^{-1}(3)}
\end{matrix}\right),
\end{equation}
where $\sigma \in \mathfrak{S}_3$. We introduce another action $t: \mathcal{M} \to \mathcal{M}$ by $t\left(\begin{smallmatrix}
x & y & z\\
x' & y' & z'
\end{smallmatrix}\right)=\left(\begin{smallmatrix}
x' & y' & z'\\
x & y & z
\end{smallmatrix}\right)$. Then, every element obtained by applying these actions for $M \in \mathcal{M}$ is called a {\em permutation} of $M$. Similarly, we define permutations of $S \in \mathcal{S}$. (For $\mathcal{S}$, we set $t=\mathrm{id}$.)
\subsection{Fundamental domain}
Recall that $\mathcal{S}$ is defined by the set of all $S=(p,q,r) \in \mathbb{R}^3$. Then, there is a map $\mathrm{Sk}: \mathcal{M} \to \mathcal{S}$ with $\mathrm{Sk}\left(\begin{smallmatrix}
x & y & z\\
x' & y' & z'
\end{smallmatrix}\right)=(\varepsilon_x\sqrt{xx'},\varepsilon_y\sqrt{yy'},\varepsilon_z\sqrt{zz'})$, where $\varepsilon_a \in \{0,\pm1\}$ is the sign of $a \in \mathbb{R}$. By using this map, we define $\widehat{\mathcal{S}}=\mathrm{Sk}({\mathcal{M}}_{\mathbb{Z}})$, $\widehat{\mathcal{S}}^{+}=\mathrm{Sk}({\mathcal{M}}_{\mathbb{Z}}^{+})$, and $\widehat{\mathcal{S}}^{\mathrm{CP}}=\mathrm{Sk}({\mathcal{M}}_{\mathbb{Z}}^{\mathrm{CP}})$.
\par
First, we give an expression of $\widehat{\mathcal{S}}$.
\begin{prop}
We have
\begin{equation}
\begin{aligned}
\widehat{\mathcal{S}}&=\{(p,q,r) \in \mathcal{S} \mid p^2,q^2,r^2,pqr \in \mathbb{Z}\}.
\end{aligned}
\end{equation}
\end{prop}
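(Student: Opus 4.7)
The plan is to prove the two inclusions separately. The forward inclusion $\widehat{\mathcal{S}} \subseteq \{(p,q,r) \mid p^2, q^2, r^2, pqr \in \mathbb{Z}\}$ is immediate from the definitions: if $M \in \mathcal{M}_{\mathbb{Z}}$ with $\mathrm{Sk}(M)=(p,q,r)$, then $p^2=xx'$, $q^2=yy'$, $r^2=zz'$ are integers, and equation (\ref{eq: pqr lemma}) gives $pqr = xyz \in \mathbb{Z}$.

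For the reverse inclusion, given $(p,q,r) \in \mathcal{S}$ with $p^2, q^2, r^2, pqr \in \mathbb{Z}$, I plan to construct an explicit $M \in \mathcal{M}_{\mathbb{Z}}$ with $\mathrm{Sk}(M)=(p,q,r)$. The degenerate case where some entry vanishes (say $p=0$, so $p^2=0$ and automatically $pqr=0$) is handled directly by setting $x=x'=0$, $(y,y')=(\varepsilon_q q^2,\varepsilon_q)$, $(z,z')=(\varepsilon_r r^2,\varepsilon_r)$; the sign/product conditions are then trivially met.

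When $p,q,r$ are all nonzero the plan is a prime-by-prime construction. For each prime $\ell$ set $a_\ell=v_\ell(p^2)$, $b_\ell=v_\ell(q^2)$, $c_\ell=v_\ell(r^2)$; since $(pqr)^2=p^2q^2r^2$ and $pqr \in \mathbb{Z}$, one has $v_\ell(|pqr|)=(a_\ell+b_\ell+c_\ell)/2 \in \mathbb{Z}_{\geq 0}$, so in particular $a_\ell+b_\ell+c_\ell$ is even. I would then choose non-negative integers $\alpha_\ell \leq a_\ell$, $\beta_\ell \leq b_\ell$, $\gamma_\ell \leq c_\ell$ with $\alpha_\ell+\beta_\ell+\gamma_\ell=(a_\ell+b_\ell+c_\ell)/2$, put $x_0=\prod_\ell \ell^{\alpha_\ell}$, $y_0=\prod_\ell \ell^{\beta_\ell}$, $z_0=\prod_\ell \ell^{\gamma_\ell}$, and set $(x,x')=(\varepsilon_p x_0,\varepsilon_p p^2/x_0)$ and analogously for $(y,y')$ and $(z,z')$. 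A direct prime-factorization check then yields $xyz=\varepsilon_p\varepsilon_q\varepsilon_r|pqr|=pqr=x'y'z'$, together with $\mathrm{Sk}(M)=(p,q,r)$ and the required sign conditions, so $M \in \mathcal{M}_{\mathbb{Z}}$.

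The main technical point is the existence of the triple $(\alpha_\ell,\beta_\ell,\gamma_\ell)$ inside the box $[0,a_\ell]\times[0,b_\ell]\times[0,c_\ell]$ summing to $T_\ell=(a_\ell+b_\ell+c_\ell)/2$. The greedy prescription $\alpha_\ell=\min(a_\ell,T_\ell)$, $\beta_\ell=\min(b_\ell,T_\ell-\alpha_\ell)$, $\gamma_\ell=T_\ell-\alpha_\ell-\beta_\ell$ does the job; the only nontrivial bound $\gamma_\ell \leq c_\ell$ arises in the case $a_\ell+b_\ell<T_\ell$, where $\gamma_\ell=(c_\ell-a_\ell-b_\ell)/2 \leq c_\ell/2$, which settles the matter.
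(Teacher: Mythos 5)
Your proof is correct, and the construction in the non-degenerate case is genuinely different from the paper's. The paper writes $p=l\sqrt{a}$, $q=m\sqrt{b}$, $r=n\sqrt{c}$ with $a,b,c$ squarefree, observes that $\sqrt{abc}\in\mathbb{Z}$ forces each prime to divide exactly zero or two of $a,b,c$, and thereby factors $a=wu$, $b=uv$, $c=vw$ to produce the symmetric preimage $M=\left(\begin{smallmatrix} lu & mv & nw\\ lw & mu & nv\end{smallmatrix}\right)$. You instead work with the full $\ell$-adic valuations of $p^2,q^2,r^2$ and reduce the problem to splitting each total exponent $a_\ell+b_\ell+c_\ell$ in half inside the box $[0,a_\ell]\times[0,b_\ell]\times[0,c_\ell]$, which your greedy choice settles (the only case needing care, $a_\ell+b_\ell<T_\ell$, indeed gives $\gamma_\ell=(c_\ell-a_\ell-b_\ell)/2\in[0,c_\ell]$, with integrality guaranteed by the parity of $a_\ell+b_\ell+c_\ell$). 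The paper's route buys a canonical, permutation-symmetric preimage with a transparent structural meaning (the squarefree kernels pair up), while yours is more mechanical and produces a generally different, asymmetric $M$ (e.g.\ for $(2,2,2)$ you get $\left(\begin{smallmatrix}4&2&1\\1&2&4\end{smallmatrix}\right)$ rather than $\left(\begin{smallmatrix}2&2&2\\2&2&2\end{smallmatrix}\right)$), but it avoids any discussion of squarefree parts; both verify $xyz=x'y'z'=pqr$ and the column sign conditions correctly. A minor point in your favor: your treatment of the degenerate case carries the signs $\varepsilon_q,\varepsilon_r$ explicitly, whereas the paper's example $\left(\begin{smallmatrix}p^2&1&0\\1&q^2&0\end{smallmatrix}\right)$ is written only for nonnegative $p,q$.
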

\begin{proof}
The inclusion $\widehat{\mathcal{S}} \subset \{(p,q,r)\mid p^2,q^2,r^2,pqr \in \mathbb{Z}\}$ is immediately shown by the definition of $\mathrm{Sk}(M)=(p,q,r)$. Note that $pqr \in \mathbb{Z}$ follows from (\ref{eq: pqr lemma}). Let $S=(p,q,r) \in \widehat{\mathcal{S}}$. We find $M \in \mathcal{M}_{\mathbb{Z}}$ with $\mathrm{Sk}(M)=S$. If at least one entry of $S$ is zero, for example $r=0$, take $M=\left(\begin{smallmatrix}
p^2 & 1 & 0\\
1 & q^2 & 0
\end{smallmatrix}\right) \in \mathcal{M}_{\mathbb{Z}}$. Assume $p,q,r \neq 0$. Since $p^2,q^2,r^2 \in \mathbb{Z}$, we may express $p=l\sqrt{a}$, $q=m\sqrt{b}$, and $r=n\sqrt{c}$, where $l,m,n \in \mathbb{Z}\backslash\{0\}$ and $a,b,c \in \mathbb{Z}_{> 0}$ do not have any square factors. Since $pqr=lmn\sqrt{abc} \in \mathbb{Z}$ and $lmn \in \mathbb{Z}\backslash\{0\}$, we need $\sqrt{abc} \in \mathbb{Z}$. Let $p_k$ be the $k$th prime number, and consider factorizations $a=\prod p_k^{\varepsilon_{k}^{a}}$, $b=\prod p_k^{\varepsilon_{k}^{b}}$, and $c=\prod p_k^{\varepsilon_{k}^{c}}$. Since $a$, $b$, and $c$ do not have square factors, every exponent of $p_k$ should be $0$ or $1$. Moreover, since $\sqrt{abc}=\sqrt{\prod p_k^{\varepsilon_k^a+\varepsilon_k^b+\varepsilon_k^c}}$ is an integer, we need $\varepsilon_k^a+\varepsilon_k^b+\varepsilon_k^c \in 2\mathbb{Z}$ for any $k$. In particular, $(\varepsilon_k^a,\varepsilon_k^b,\varepsilon_k^c)$ is any of $(0,0,0)$, $(1,1,0)$, $(0,1,1)$, or $(1,0,1)$. Let
\begin{equation}
u=\prod_{(\varepsilon_k^a,\varepsilon_k^b)=(1,1)} p_k,\ v=\prod_{(\varepsilon_k^b,\varepsilon_k^c)=(1,1)} p_k,\ w=\prod_{(\varepsilon_k^c,\varepsilon_k^a)=(1,1)} p_k.
\end{equation}
Then, we have $a=wu$, $b=uv$, and $c=vw$. By setting
\begin{equation}
M=\left(\begin{matrix}
lu & mv & nw\\
lw & mu & nv
\end{matrix}\right) \in \mathcal{M}_{\mathbb{Z}},
\end{equation}
we have $\mathrm{Sk}(M)=S$.
\end{proof}
As Definition~\ref{dfn: A, B conditions}, every cluster-positive element $S \in \mathcal{S}$ satisfies the condition (A) or (B). However, when we consider $S \in \widehat{\mathcal{S}}$, the case (B) does not occur.
(This proposition is a generalization of \cite[Lem.~2.1~(b)]{BBH11} for the skew-symmetrizable case.)
\begin{prop}[{cf.~\cite[Lem.~2.1~(b)]{BBH11}}]\label{prop: integer lemma}
If $S \in \widehat{\mathcal{S}}^{+}$ is cluster-positive, then $S$ satisfies (A), which is defined in Definition~\ref{dfn: A, B conditions}.
\end{prop}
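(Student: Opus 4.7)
The plan is to argue by contradiction. I would suppose that $S \in \widehat{\mathcal{S}}^{+}$ is cluster-positive and satisfies condition (B), and then exhibit a conflict with the integrality that defines $\widehat{\mathcal{S}}$. By the discussion following Definition~\ref{dfn: A, B conditions} and Lemma~\ref{lem: convergence}, condition (B) supplies an infinite sequence $S_0 = S, S_1, S_2, \ldots$ with $S_i = \gamma_{t_i}(S_{i-1})$, each $S_i$ satisfying (M2), $S_{i-1} \nleq S_i$, and $S_i \to (2,2,2)$.

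The first ingredient I need is that $\gamma_k$ preserves $\widehat{\mathcal{S}}$. This follows at once because the formulas in (\ref{eq: gamma on M}) have integer coefficients, so $\gamma_k$ sends $\mathcal{M}_{\mathbb{Z}}$ to $\mathcal{M}_{\mathbb{Z}}$, and $\mathrm{Sk}$ intertwines $\gamma_k$ on $\mathcal{M}$ with $\gamma_k$ on $\mathcal{S}$ by Proposition~\ref{prop: fundamental property of DSSS}(b). Writing $S_i = (p_i, q_i, r_i)$, I conclude that $p_i^2, q_i^2, r_i^2, p_i q_i r_i \in \mathbb{Z}$ for every $i$.

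The second ingredient is that each of the three sequences $\{p_i\}, \{q_i\}, \{r_i\}$ is monotonically non-increasing. Indeed, $\gamma_{t_i}$ changes only the $t_i$-th coordinate, and the requirement $S_{i-1} \nleq S_i$ forces the chosen coordinate to strictly decrease; see Proposition~\ref{prop: charac. of M}(2). Since each coordinate is bounded below by $2$ and tends to $2$, the squared sequences $p_i^2, q_i^2, r_i^2$ are monotonically non-increasing sequences of integers in $\mathbb{Z}_{\geq 4}$ that converge to $4$, and hence are eventually constantly equal to $4$. Therefore $S_i = (2,2,2)$ for all sufficiently large $i$.

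The conclusion is immediate: $(2,2,2)$ is a fixed point of every $\gamma_k$, so the step $S_i \mapsto S_{i+1} = \gamma_{t_{i+1}}(S_i)$ gives $S_{i+1} = S_i$, contradicting $S_i \nleq S_{i+1}$ (alternatively, $(2,2,2)$ satisfies (M1), not (M2)). Hence case (B) is incompatible with $S \in \widehat{\mathcal{S}}^{+}$, so $S$ must satisfy (A). I do not expect serious obstacles; the subtle point is recognizing that the strict-descent-in-one-coordinate feature of the (M2) sequence, combined with the integrality of squared entries of elements in $\widehat{\mathcal{S}}$, forces termination at the unique fixed triple $(2,2,2)$, a termination phenomenon that has no real-case analogue and is precisely what makes the integer skew-symmetrizable case behave differently from the general real one.
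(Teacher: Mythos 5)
Your argument is correct, but it runs along a different track from the paper's. The paper's proof is a one-line descent on a single integer: along the (M2)-sequence (\ref{eq: (M2) sequence}) exactly one coordinate strictly decreases at each step while the other two stay fixed and positive, so the product $p_iq_ir_i$ is a strictly decreasing sequence in $\mathbb{Z}$ (integrality coming from $pqr\in\mathbb{Z}$ for elements of $\widehat{\mathcal{S}}$), hence drops by at least $1$ per step and is eventually negative, contradicting cluster-positivity outright. You instead track the three integer sequences $p_i^2,q_i^2,r_i^2$ and invoke Lemma~\ref{lem: convergence}~(a) to force them to stabilize at $4$, i.e.\ $S_i=(2,2,2)$ eventually, which contradicts strict descent (or the (M2) property). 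Both proofs are integrality-forces-termination arguments, but yours imports the comparatively heavy convergence lemma (whose proof goes through the Chebyshev analysis of Proposition~\ref{prop: limit theorem}), whereas the paper's needs nothing beyond the strict monotonicity of the product; on the other hand, your route makes transparent \emph{where} the sequence would have to terminate, namely at the fixed point $(2,2,2)$, which connects nicely with Section~\ref{sec: the case (B)}. Note also that you could trim your own argument: a non-increasing sequence of integers bounded below is eventually constant, so you already get $S_i$ eventually constant --- contradicting the strict decrease of one coordinate per step --- without needing to identify the limit as $(2,2,2)$ at all. Your two supporting claims (that $\gamma_k$ preserves $\widehat{\mathcal{S}}$ via the commutation $\mathrm{Sk}\circ\gamma_k=\gamma_k\circ\mathrm{Sk}$, and that $S_{i-1}\nleq S_i$ forces the mutated coordinate to strictly decrease) are both sound.
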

\begin{proof}
Suppose that there exists a monotonically decreasing sequence (\ref{eq: (M2) sequence}) and set $S_i=(p_i,q_i,r_i) \in \widehat{\mathcal{S}}^{+}$. Then, their product $p_iq_ir_i$ is strictly monotonically decreasing. Since $p_iq_ir_i \in \mathbb{Z}$, it implies that $p_iq_ir_i$ must be negative if $i$ is enough large. Thus, $S$ must not be cluster-positive.
\end{proof}
Moreover, by Lemma~\ref{lem: M1}, an element in $\Gamma(S)$ satisfying (M1) is unique for any $S \in \widehat{\mathcal{S}}^{\mathrm{CP}}$. In particular, the set
\begin{equation}
F_{\mathrm{S}}=\{S \in \widehat{\mathcal{S}}^{+} \mid \textup{$S$ satisfies (M1)}\}
\end{equation}
is a fundamental domain for the action of $\Gamma$ on $\widehat{\mathcal{S}}^{\mathrm{CP}}$ \cite[Thm.~5.1]{BBH11}. We generalize this result for the skew-symmetrizable case.
\par
For this, we introduce a partial order $\leq$ on $\mathcal{M}$ as each corresponding entry satisfies this order on $\mathbb{R}$. Then, this order is compatible for the map $\mathrm{Sk}$.
\begin{lem}\label{lem: skew-symmetrizable M1}
Let $M=\left(\begin{smallmatrix}
a_1 & a_2 & a_3\\
a'_1 & a'_2 & a'_3
\end{smallmatrix}\right) \in {\mathcal{M}}^{+}$. Then, for any $i=1,2,3$, the following three inequalities are equivalent.
\begin{itemize}
\item[(a)] $M \leq \gamma_i(M)$ on $\mathcal{M}$.
\item[(b)] $\mathrm{Sk}(M) \leq \gamma_i(\mathrm{Sk}(M))$ on $\mathcal{S}$.
\item[(c)] $a_1a_2a_3 \geq 2 a_ia'_i$.
\end{itemize}
\end{lem}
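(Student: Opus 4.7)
The plan is to reduce all three equivalences to a single computational identity by exploiting the defining relation $a_1a_2a_3 = a'_1a'_2a'_3$ for elements of $\mathcal{M}$ (see \eqref{eq: properties of M}). Since $\mathcal{M}$, the maps $\gamma_k$, and condition (c) are all cyclically symmetric in the three column indices, it suffices to treat the case $i=1$; the other cases follow by relabelling.

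For (a) $\Leftrightarrow$ (c), I would write out
\[
\gamma_1(M)=\left(\begin{matrix}
a'_2a'_3-a_1 & a_2 & a_3\\
a_2a_3-a'_1 & a'_2 & a'_3
\end{matrix}\right),
\]
so that $M\leq\gamma_1(M)$ is entry-by-entry the pair of inequalities $2a_1\leq a'_2a'_3$ and $2a'_1\leq a_2a_3$. Multiplying the first by $a'_1>0$ and applying $a'_1a'_2a'_3=a_1a_2a_3$ turns it into $2a_1a'_1\leq a_1a_2a_3$; multiplying the second by $a_1>0$ gives the same inequality. Conversely, $2a_1a'_1\leq a_1a_2a_3$ divided by $a_1$ recovers $2a'_1\leq a_2a_3$, while dividing by $a'_1$ and using the defining relation recovers $2a_1\leq a'_2a'_3$. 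Every step is multiplication or division by a strictly positive quantity, so no sign issue arises.

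For (b) $\Leftrightarrow$ (c), set $(p,q,r)=\mathrm{Sk}(M)=(\sqrt{a_1a'_1},\sqrt{a_2a'_2},\sqrt{a_3a'_3})$. Then $\gamma_1(p,q,r)=(qr-p,q,r)$, and (b) reduces to $2p\leq qr$, which squares to $4a_1a'_1\leq a_2a'_2\,a_3a'_3$. One application of the defining identity gives
\[
a_2a'_2\,a_3a'_3=(a_2a_3)(a'_2a'_3)=\frac{(a_1a_2a_3)(a'_1a'_2a'_3)}{a_1a'_1}=\frac{(a_1a_2a_3)^2}{a_1a'_1},
\]
so the squared inequality becomes $4(a_1a'_1)^2\leq(a_1a_2a_3)^2$, and extracting the positive square root yields exactly (c). Combining the two chains gives (a) $\Leftrightarrow$ (c) $\Leftrightarrow$ (b).

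There is no genuine obstacle; the argument is a direct calculation pivoting on the identity $a_1a_2a_3=a'_1a'_2a'_3$. The only care needed is to invoke strict positivity in $\mathcal{M}^+$ whenever one squares, takes square roots, or divides, and to check at the end that the cyclic symmetry used to reduce to $i=1$ genuinely holds for both the relation defining $\mathcal{M}$ and the formulas \eqref{eq: gamma on M} for $\gamma_k$.
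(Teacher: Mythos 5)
Your proof is correct and follows essentially the same route as the paper: reduce to $i=1$, unpack $M\leq\gamma_1(M)$ into the two entrywise inequalities, and convert both (a) and (b) into (c) via the identity $a_1a_2a_3=a'_1a'_2a'_3$ together with positivity. The paper merely leaves the (b)$\Leftrightarrow$(c) computation as ``similarly,'' which you have written out explicitly.
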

\begin{proof}
We show the claim for the case of $i=1$. The condition (a) is equivalent to the condition that both inequalities $a'_2a'_3-a_1 \geq a_1$ and $a_2a_3 - a'_1 \geq a'_1$ hold. These two inequalities are equivalent to $a_1a_2a_3 \geq 2a_1a'_1$ since $a_1a_2a_3=a'_1a'_2a'_3$ and $a_1,a'_1 > 0$. Thus, (a) and (c) are equivalent. Similarly, we can show that (b) and (c) are equivalent.
\end{proof}
\begin{dfn}
For any $M=\left(\begin{smallmatrix}
x & y & z\\
x' & y' & z'
\end{smallmatrix}\right) \in \mathcal{M}^{+}$, we say that $M$ satisfies (M1) if $\mathrm{Sk}(M)$ satisfies (M1) in terms of Definition~\ref{dfn: Mk conditions}.
\end{dfn}
By Proposition~\ref{prop: charac. of M} and Lemma~\ref{lem: skew-symmetrizable M1}, $M =\left(\begin{smallmatrix}
x & y & z\\
x' & y' & z'
\end{smallmatrix}\right) \in \mathcal{M}^{+}$ satisfies (M1) if and only if $xyz \geq 2xx',2yy',2zz'$.
\par
Now, we can generalize Lemma~\ref{lem: M1}~(c) for $\mathcal{M}$.
\begin{lem}\label{lem: increasing sequence of skew-symmetrizable}
Let $M \in \mathcal{M}_{\mathbb{Z}}$ satisfy (M1). Then, for any $M' \in \Gamma(M)$, there is a sequence
\begin{equation}\label{eq: monotonically increasing}
M=M_0 \overset{\gamma_{t_1}}{\mapsto} M_1 \overset{\gamma_{t_2}}{\mapsto} \cdots \overset{\gamma_{t_k}}{\mapsto} M_k=M'
\end{equation}
satisfying $M_{i-1} \leq M_{i}$ and $M_{i-1} \neq M_{i}$ for any $i=1,2,\dots,k$. In particular, we have $M \leq M'$ for any $M' \in \Gamma(M)$.
\end{lem}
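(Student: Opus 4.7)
The plan is to mirror the proof of Lemma~\ref{lem: M1}~(c) at the $\mathcal{M}$-level, using Lemma~\ref{lem: skew-symmetrizable M1} as the bridge between $\mathcal{M}$ and $\mathcal{S}$. I proceed by induction on the distance $d_{\mathcal{M}}(M,M')$, defined as the minimum length of a sequence $(s_1,\dots,s_d)$ with $s_i \neq s_{i+1}$ satisfying $M'=\gamma_{s_d}\cdots\gamma_{s_1}(M)$. The base case $d=0$ is trivial. For the inductive step with $d\geq 1$, pick a reduced word $(s_1,\dots,s_d)$ realising the distance and set $M^{**}=\gamma_{s_{d-1}}\cdots\gamma_{s_1}(M)$; then $M^{**}$ lies at distance $d-1$ and $M'=\gamma_{s_d}(M^{**})$. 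By induction there is a strictly monotonic sequence $M=M_0 \overset{\gamma_{t_1}}{\mapsto}\cdots\overset{\gamma_{t_{d-1}}}{\mapsto} M_{d-1}=M^{**}$, and the task reduces to showing $M_{d-1}<\gamma_{s_d}(M_{d-1})=M'$ strictly.

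The case $d=1$ is immediate from (M1) applied to $M$, so assume $d\geq 2$. First, I would show $s_d \neq t_{d-1}$: otherwise $\gamma_{s_d}(M_{d-1})=\gamma_{t_{d-1}}(\gamma_{t_{d-1}}(M_{d-2}))=M_{d-2}$, forcing $M'=M_{d-2}$ and contradicting $d_{\mathcal{M}}(M,M')=d>d-2$. Next, I pass to $\mathcal{S}$ by setting $S_i=\mathrm{Sk}(M_i)$. The commutativity $\mathrm{Sk}\circ\gamma_k=\gamma_k\circ\mathrm{Sk}$ (valid on the positive-cyclic regime, which is preserved by the monotonic sequence) together with Lemma~\ref{lem: skew-symmetrizable M1} transfers the inequalities, and the scalar equivalence $M_i=\gamma_{k}(M_i)\iff S_i=\gamma_k(S_i)$ (both being governed by the single condition, e.g.\ $q_ir_i=2p_i$ when $k=1$, as can be verified directly from $x_iy_iz_i=x_i'y_i'z_i'$) upgrades strict monotonicity in $\mathcal{M}$ to strict monotonicity in $\mathcal{S}$. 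In particular $S_{d-1}\neq S$, so Lemma~\ref{lem: M1}~(b) yields that $S_{d-1}$ satisfies (M2); by Lemma~\ref{lem: skew-symmetrizable M1} applied directionwise, $M_{d-1}$ also satisfies (M2), meaning exactly two indices $i\in\{1,2,3\}$ have $M_{d-1}\leq \gamma_i(M_{d-1})$.

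To finish, note that $\gamma_{t_{d-1}}(M_{d-1})=M_{d-2}<M_{d-1}$ strictly, so $M_{d-1}\not\leq \gamma_{t_{d-1}}(M_{d-1})$; hence $t_{d-1}$ is \emph{not} one of the two favourable indices and the two favourable indices form the set $\{1,2,3\}\setminus\{t_{d-1}\}$. Since $s_d\neq t_{d-1}$, we conclude $s_d$ lies in this set, giving $M_{d-1}\leq \gamma_{s_d}(M_{d-1})=M'$. Strictness follows because $M_{d-1}\neq M'$ (they have distinct distances from $M$). Appending this step to the inductive sequence yields the desired strictly monotonic sequence from $M$ to $M'$, and the final assertion $M\leq M'$ then follows by transitivity.

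The main obstacle is the delicate bookkeeping in the transfer between $\mathcal{M}$ and $\mathcal{S}$: one must be careful that $\mathrm{Sk}$ commutes with $\gamma_k$ throughout the monotonic sequence (this holds because monotonicity keeps all entries positive), and one must extract the equivalence $M_i=\gamma_k(M_i)\iff S_i=\gamma_k(S_i)$ so that strict monotonicity of the constructed $\mathcal{M}$-sequence guarantees $S_{d-1}\neq S$. Once this is in place, Lemma~\ref{lem: M1}~(b) and Lemma~\ref{lem: skew-symmetrizable M1} allow the (M2)-argument of \cite{BBH11} to run essentially verbatim at the $\mathcal{M}$-level, without ever needing injectivity of $\mathrm{Sk}$ on the orbit.
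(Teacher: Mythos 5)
Your argument is correct and rests on the same two pillars as the paper's own proof: transferring the monotonicity question to $\mathcal{S}$ via $\mathrm{Sk}$ and the entrywise equivalence of Lemma~\ref{lem: skew-symmetrizable M1}, then invoking the $\mathcal{S}$-level results of Lemma~\ref{lem: M1}. The paper gets there more directly — it takes the reduced sequence realizing $M'\in\Gamma(M)$, applies $\mathrm{Sk}$, cites Lemma~\ref{lem: M1}~(c) wholesale to obtain $S_{i-1}\leq S_i$ along the whole sequence, and transfers back — so your re-derivation of the (M2) two-favourable-indices induction at the $\mathcal{M}$-level, while valid, duplicates work already done there.
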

\begin{proof}
Since $M' \in \Gamma(M)$, there is a sequence (\ref{eq: monotonically increasing}) with $M_{i-1} \neq M_{i}$ and $t_{i-1} \neq t_{i}$ for any $i=1,2,\dots,k$. We show $M_{i-1} \leq M_{i}$. Consider a sequence $S_0=\mathrm{Sk}(M_0), S_1=\mathrm{Sk}(M_1),\dots,S_{k}=\mathrm{Sk}(M_k)$. Then, by Lemma~\ref{lem: M1}~(c), we have $S_{i-1} \leq S_{i}$, and it implies that $M_{i-1} \leq M_i$ by Lemma~\ref{lem: skew-symmetrizable M1}.
\end{proof}
As a conclusion, we may give a fundamental domain for the action $\Gamma$. (The existence and the uniqueness of the minimum element was shown in \cite[Thm.~1.2]{Sev13}.)
\begin{thm}[cf. {\cite[Thm.~1.2]{Sev13}}]\label{thm: fundamental domain}
A set
\begin{equation}
\begin{aligned}
F&=\{M \in \mathcal{M}_{\mathbb{Z}}^{+} \mid \textup{$M$ satisfies (M1)}\}\\
&=\left\{\left.\left(\begin{matrix}
x & y & z\\
x' & y' & z'
\end{matrix}\right) \in {\mathcal{M}}_{\mathbb{Z}}^{+}\ \right|\ xyz \geq 2xx',2yy',2zz' \right\}
\end{aligned}
\end{equation}
is a fundamental domain for the action of $\Gamma$ on $\mathcal{M}_{\mathbb{Z}}^{\mathrm{CP}}$. (Namely, $F \subset \mathcal{M}_{\mathbb{Z}}^{\mathrm{CP}}$ holds and, for any $M \in {\mathcal{M}}_{\mathbb{Z}}^{\mathrm{CP}}$, the set $\Gamma(M) \cap F$ is a singleton.) Moreover, the element of $\Gamma(M)\cap F$ is the smallest element in $\Gamma(M)$.
\end{thm}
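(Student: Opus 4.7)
The plan is to reduce the theorem, via the map $\mathrm{Sk}: \mathcal{M} \to \mathcal{S}$, to the corresponding statement on $\widehat{\mathcal{S}}$ (which has already been assembled in the preceding sections), and then to extract minimality from Lemma~\ref{lem: increasing sequence of skew-symmetrizable}. The key compatibilities I will exploit are that $\mathrm{Sk}$ commutes with every $\gamma_k$, that $M \in \mathcal{M}^+$ satisfies (M1) exactly when $\mathrm{Sk}(M)$ does (by the definition just before the theorem together with Lemma~\ref{lem: skew-symmetrizable M1}), and that $M$ is cluster-positive iff $\mathrm{Sk}(M)$ is. These take care of most of the bookkeeping. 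As a warm-up I would first verify $F \subset \mathcal{M}_{\mathbb{Z}}^{\mathrm{CP}}$: if $M \in F$, then $\mathrm{Sk}(M)$ satisfies (M1), so by Lemma~\ref{lem: M1}~(d) it is cluster-positive, and therefore so is $M$.

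For existence of some element in $\Gamma(M) \cap F$, I would argue as follows. Fix $M \in \mathcal{M}_{\mathbb{Z}}^{\mathrm{CP}}$; then $\mathrm{Sk}(M) \in \widehat{\mathcal{S}}^{+}$ is cluster-positive, so by Proposition~\ref{prop: integer lemma} it satisfies condition (A), meaning there is a (unique) $S^* \in \Gamma(\mathrm{Sk}(M))$ satisfying (M1). Since $\mathrm{Sk}\circ\gamma_k=\gamma_k\circ\mathrm{Sk}$, I can pick a sequence $t$ with $\gamma_t(\mathrm{Sk}(M))=S^*$ and set $M^* = \gamma_t(M)$. Then $\mathrm{Sk}(M^*)=S^*$ satisfies (M1), so $M^*$ satisfies (M1); $M^*$ is integer because $\gamma_k$ preserves $\mathcal{M}_{\mathbb{Z}}$; and $M^* \in \mathcal{M}^{+}$ because $M$ is cluster-positive. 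Hence $M^* \in \Gamma(M) \cap F$.

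For uniqueness, suppose $M_1, M_2 \in \Gamma(M)\cap F$. Each satisfies (M1), and each lies in the $\Gamma$-orbit of the other. Applying Lemma~\ref{lem: increasing sequence of skew-symmetrizable} with base point $M_1$ gives $M_1 \leq M_2$, and with base point $M_2$ gives $M_2 \leq M_1$, so $M_1 = M_2$. Finally, minimality of $M^*$ in $\Gamma(M)$ is exactly the conclusion of Lemma~\ref{lem: increasing sequence of skew-symmetrizable} applied to $M^*$, since $\Gamma(M^*) = \Gamma(M)$. I do not expect any real obstacle here, because Proposition~\ref{prop: integer lemma} and Lemma~\ref{lem: increasing sequence of skew-symmetrizable} already do the heavy lifting; the only small care needed is verifying at each step that the element produced by $\gamma_t$ remains in $\mathcal{M}_{\mathbb{Z}}^{+}$, which is guaranteed by $\gamma_k$ preserving integrality and by $M$ being cluster-positive.
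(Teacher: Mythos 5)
Your proposal is correct and follows essentially the same route as the paper: the inclusion $F \subset \mathcal{M}_{\mathbb{Z}}^{\mathrm{CP}}$ via (M1) and cluster-positivity of $\mathrm{Sk}(M)$, existence via Proposition~\ref{prop: integer lemma} and the commutation $\mathrm{Sk}\circ\gamma_t=\gamma_t\circ\mathrm{Sk}$, and uniqueness plus minimality from Lemma~\ref{lem: increasing sequence of skew-symmetrizable} applied in both directions. No gaps.
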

\begin{proof}
For any $M \in F$, $\mathrm{Sk}(M)$ is cluster-positive since it satisfies (M1). Thus, $M$ is also cluster-positive, namely, the inclusion $F \subset \mathcal{M}_{\mathbb{Z}}^{\mathrm{CP}}$ holds.
\par
Let $M \in \mathcal{M}_{\mathbb{Z}}^{\mathrm{CP}}$. We show $\Gamma(M) \cap F \neq \emptyset$. By Proposition~\ref{prop: integer lemma}, there is $S=\gamma_{t}(\mathrm{Sk}(M)) \in \Gamma(\mathrm{Sk}(M))$ satisfying (M1) after some transformation $\gamma_{t}$. Then, since $\gamma_{t}(\mathrm{Sk}(M))=\mathrm{Sk}(\gamma_t(M))$, $\gamma_t(M)$ satisfies (M1). Thus, we have $\gamma_t(M) \in \Gamma(M) \cap F \neq \emptyset$. Next, we show $|\Gamma(M) \cap F|=1$. Without loss of generality, we may suppose that $M$ satisfies (M1). Let $M' \in \Gamma(M) \cap F$. Then, by Lemma~\ref{lem: increasing sequence of skew-symmetrizable}, we have $M \leq M'$ and $M' \leq M$, and it implies that $M=M'$.
\end{proof}

\subsection{Fixed points and finite orbits}
Here, we focus on elements $M \in \mathcal{M}^{+}$ with a finite orbit $\Gamma(M)$. In \cite{FST12a,FST12b}, they gave a classification of these matrices.
\begin{lem}\label{lem: fixed point}
An element $M=\left(\begin{smallmatrix}
x & y & z\\
x' & y' & z'
\end{smallmatrix}\right) \in \mathcal{M}^{+}$ is a fixed point for the action $\Gamma$ if and only if $\mathrm{Sk}(M)=(2,2,2)$, that is,
\begin{equation}\label{eq: fixed point}
xx'=yy'=zz'=4.
\end{equation}
\end{lem}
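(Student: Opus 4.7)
The strategy is to exploit the compatibility of $\mathrm{Sk}$ with the mutations $\gamma_k$ established in Proposition~\ref{prop: fundamental property of DSSS}. For the forward direction, I would pass from $M$ to $\mathrm{Sk}(M) = (p,q,r) \in \mathcal{S}^{+}$: since $\gamma_k \circ \mathrm{Sk} = \mathrm{Sk} \circ \gamma_k$, if $M$ is a fixed point of $\Gamma$, then so is $(p,q,r)$. Reading off the definition (\ref{eq: gamma on S}), the three fixed-point equations $\gamma_k(p,q,r) = (p,q,r)$ become $qr = 2p$, $pr = 2q$, and $pq = 2r$. Multiplying the first two and cancelling the positive factor $pq$ gives $r^{2} = 4$, hence $r = 2$; the symmetric arguments yield $p = q = 2$. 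Therefore $\mathrm{Sk}(M) = (2,2,2)$, which is exactly the stated condition $xx' = yy' = zz' = 4$.

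For the converse, assume $xx' = yy' = zz' = 4$ with $M \in \mathcal{M}^{+}$. The skew-symmetrizable identity $xyz = x'y'z'$ from (\ref{eq: properties of M}), combined with positivity, gives $(xyz)^{2} = (xx')(yy')(zz') = 64$, so $xyz = 8$. With these two inputs I would verify $\gamma_{k}(M) = M$ by direct substitution into (\ref{eq: gamma on M}). For $k = 1$, for instance, $y'z' - x = \tfrac{16}{yz} - x = \tfrac{16x}{xyz} - x = 2x - x = x$, and analogously $yz - x' = x'$; the cases $k = 2, 3$ are completely symmetric.

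The argument is almost entirely mechanical; the only subtlety worth flagging is that $\mathrm{Sk}$ is not injective on $\mathcal{M}^{+}$, so the converse cannot be inferred purely from the corresponding fact on $\mathcal{S}$. It requires the explicit computation above, in which the relation $xyz = x'y'z'$ built into the definition of $\mathcal{M}$ is essential for recovering the scalar $xyz = 8$ and thereby ensuring that each $\gamma_k$ acts trivially on \emph{both} rows of $M$.
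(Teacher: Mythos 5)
Your proof is correct and follows essentially the same route as the paper's: the paper derives $xx'=yy'=zz'=4$ by manipulating the fixed-point equations $y'z'=2x$, $yz=2x'$ (and their analogues) directly on $M$, which amounts to the same pairwise multiplication you perform after passing to $\mathrm{Sk}(M)=(p,q,r)$, and its converse is the identical computation via $(xyz)^2=(xx')(yy')(zz')=64$, hence $xyz=x'y'z'=8$. Your closing remark that the converse must be verified on $M$ itself because $\mathrm{Sk}$ is not injective is exactly the point the paper's proof implicitly addresses by substituting back into $\gamma_k(M)$.
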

\begin{proof}
Let $M=\left(\begin{smallmatrix}
x & y & z\\
x' & y' & z'
\end{smallmatrix}\right) \in \mathcal{M}^{+}$ be a fixed point. Then, by considering $\gamma_1(M)=M$, we have $y'z'=2x$ and $yz=2x'$. Since $xyz=x'y'z'$ and $x,x' \neq 0$, both equalities are eqivalent to $xyz=x'y'z'=2xx'$. Similarly, we have $xyz=x'y'z'=2xx'=2yy'=2zz'$. For example, by multiplying both sides of $xyz=2xx'$ and $x'y'z'=2yy'$, we have $zz'=4$. Similarly, we have $xx'=yy'=zz'=4$. Conversely, assume $xx'=yy'=zz'=4$. Then, since $(xyz)^2=(xyz)(x'y'z')=(xx')(yy')(zz')=64$, we have $xyz=x'y'z'=8$. Thus, we have $xyz=x'y'z'=2xx'$ and, by $x,x' \neq 0$, it implies $y'z'-x=x$ and $yz-x'=x'$. These two equalities imply $\gamma_1(M)=M$. By doing a similar arguments for $\gamma_2(M)$ and $\gamma_3(M)$, we may obtain the fact that $M$ is a fixed point.
\end{proof}
When we consider $M \in \mathcal{M}_{\mathbb{Z}}^{+}$, we can write all fixed points. The matrices corresponding to the following fixed points are studied in \cite{Lam16}.
\begin{prop}\label{prop: fixed point}
All fixed points of $\mathcal{M}^{+}_{\mathbb{Z}}$ for the action $\Gamma$ are
\begin{equation}\label{eq: fixed point}
\left(\begin{matrix}
2 & 2 & 2\\
2 & 2 & 2
\end{matrix}\right),
\left(\begin{matrix}
4 & 1 & 2\\
1 & 4 & 2
\end{matrix}\right),
\end{equation}
and their permutations.
\end{prop}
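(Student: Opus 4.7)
By Lemma~\ref{lem: fixed point}, an element $M=\left(\begin{smallmatrix} x & y & z\\ x' & y' & z'\end{smallmatrix}\right) \in \mathcal{M}^+_{\mathbb{Z}}$ is a fixed point for $\Gamma$ if and only if $xx'=yy'=zz'=4$, together with the standing constraint $xyz=x'y'z'$ from (\ref{eq: properties of M}). So the plan is simply to enumerate all integer solutions to this system.

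Since $x,x'$ are positive integers with $xx'=4$, the pair $(x,x')$ must lie in $\{(1,4),(2,2),(4,1)\}$, and likewise for $(y,y')$ and $(z,z')$. Introduce the ratios $a=x/x'$, $b=y/y'$, $c=z/z'$, each of which belongs to $\{\tfrac14,1,4\}$. The condition $xyz=x'y'z'$ becomes $abc=1$. I would then split into cases according to how many of $a,b,c$ equal $1$:
\begin{itemize}
\item If $a=b=c=1$, then $(x,x')=(y,y')=(z,z')=(2,2)$, yielding $M=\left(\begin{smallmatrix}2&2&2\\2&2&2\end{smallmatrix}\right)$.
\item If exactly one of $a,b,c$ equals $1$, say $a=1$, then $bc=1$ with $b,c\in\{\tfrac14,4\}$, forcing $\{b,c\}=\{\tfrac14,4\}$. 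This gives $(x,x')=(2,2)$ and $\{(y,y'),(z,z')\}=\{(1,4),(4,1)\}$; by symmetry the same holds when the unique index with ratio $1$ is the second or third coordinate.
\item If none of $a,b,c$ equals $1$, then each lies in $\{\tfrac14,4\}$, so $abc \in \{4^{\pm 1}, 4^{\pm 3}\}$; none of these equals $1$, a contradiction.
\end{itemize}

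All nontrivial solutions thus arise by choosing one column with entries $(2,2)$ and filling the remaining two columns with $(1,4)$ and $(4,1)$ in some order; this is exactly the $\mathfrak{S}_3 \times \langle t \rangle$-orbit of $\left(\begin{smallmatrix}4&1&2\\1&4&2\end{smallmatrix}\right)$. Conversely, any such permutation preserves the relations $xx'=yy'=zz'=4$ and $xyz=x'y'z'$, so every permutation really is a fixed point. This exhausts the list and completes the proof. There is no real obstacle here: the whole argument reduces to a three-line enumeration once Lemma~\ref{lem: fixed point} is in hand.
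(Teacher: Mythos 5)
Your proof is correct and takes essentially the same route as the paper: reduce to $xx'=yy'=zz'=4$ via Lemma~\ref{lem: fixed point}, then enumerate the integer factorizations of $4$ subject to $xyz=x'y'z'$ (the paper phrases this last constraint as $xyz=x'y'z'=8$). One tiny omission: your case split on how many of $a,b,c$ equal $1$ skips the case of exactly two, but that case is vacuous since then $abc$ equals the remaining ratio, which is $4^{\pm 1}\neq 1$.
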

\begin{proof}
It follows from Proposition~\ref{prop: fixed point}. Note that integers $z,z' \in \mathbb{Z}_{>0}$ satisfying $zz'=4$ are $(z,z')=(1,4),(2,2),(4,1)$. By considering the condition $xyz=x'y'z'=8$, we may show the claim. 
\end{proof}
\begin{rem}
Consider an integer skew-symmetrizable matrix $B$. Then, Proposition~\ref{prop: fixed point} implies that, for a {\em cyclic} matrix $B$, it holds that $\mu_k(B)=-B$ for any $k=1,2,3$ if and only if $B$ corresponds to an element in (\ref{eq: fixed point}). However, since we may verify that, for any acyclic-matrix $B$ except for the zero matrix $O$, $\mu_k(B) \neq -B$ holds for some $k=1,2,3$. Thus, $\mu_k(B)=-B$ holds for any $k=1,2,3$ if and only if $B=O$ or $B$ corresponds to an element in (\ref{eq: fixed point}).
\end{rem}
The following result is a generalization of \cite[Cor.~1.3]{BBH11} for the skew-symmetrizable case.
\begin{prop}[cf.~{\cite[Cor.~1.3]{BBH11}}]\label{prop: finite type}
All cluster-positive elements $M \in \mathcal{M}^{\mathrm{CP}}_{\mathbb{Z}}$ with $|\Gamma(M)|<\infty$ are in (\ref{eq: fixed point}) and their permutations.
\end{prop}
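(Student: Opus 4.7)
The plan is to reduce to the fixed-point classification (Proposition~\ref{prop: fixed point}) by proving that any $M \in \mathcal{M}_{\mathbb{Z}}^{\mathrm{CP}}$ with $|\Gamma(M)| < \infty$ must actually be a $\Gamma$-fixed point. By Theorem~\ref{thm: fundamental domain}, the orbit contains a unique $M_0 \in F$, which is its minimum and satisfies (M1). Setting $S_0 = \mathrm{Sk}(M_0) \in \widehat{\mathcal{S}}^{+}$, this element also satisfies (M1) by Lemma~\ref{lem: skew-symmetrizable M1} and has $|\Gamma(S_0)| \leq |\Gamma(M_0)| < \infty$, since $\mathrm{Sk}$ intertwines the $\gamma_k$. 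By Lemma~\ref{lem: fixed point}, it suffices to prove $S_0 = (2,2,2)$: this forces $M_0$ to be a $\Gamma$-fixed point, so $\Gamma(M) = \{M_0\}$ and $M = M_0$ appears in Proposition~\ref{prop: fixed point}.

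The crux is therefore: \emph{if $S \in \mathcal{S}^{+}$ satisfies \textup{(M1)} and $S \neq (2,2,2)$, then $|\Gamma(S)| = \infty$.} I would construct an infinite strictly increasing chain $S = S_0 < S_1 < S_2 < \cdots$ in $\Gamma(S)$ with non-backtracking indices $i_1 \neq i_2 \neq \cdots$. The simultaneous equations $\gamma_k(S) = S$ for $k = 1,2,3$ amount to $qr = 2p$, $rp = 2q$, $pq = 2r$, whose only positive solution is $(2,2,2)$; hence $S \neq (2,2,2)$ gives some $i_1$ with $\gamma_{i_1}(S) \neq S$, and by (M1) we obtain $S_1 := \gamma_{i_1}(S) > S$ strictly. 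Moreover Lemma~\ref{lem: M1}(b) ensures every $S_n$ with $n \geq 1$ satisfies (M2).

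For the inductive step, suppose $S_n$ satisfies (M2); since $\gamma_{i_n}$ is an involution, $\gamma_{i_n}(S_n) = S_{n-1} < S_n$ strictly. By Proposition~\ref{prop: charac. of M}(2), exactly two indices $j$ satisfy $S_n \leq \gamma_j(S_n)$, and since $\gamma_{i_n}(S_n) \nleq S_n$ these are the two indices $j_1, j_2 \neq i_n$. The main obstacle is to guarantee that at least one of $\gamma_{j_1}(S_n), \gamma_{j_2}(S_n)$ is a strict increase. If instead both equalled $S_n$, the relations $p_{i_n} p_{j_2} = 2 p_{j_1}$ and $p_{i_n} p_{j_1} = 2 p_{j_2}$ would force $p_{i_n} = 2$ and $p_{j_1} = p_{j_2} =: a$; by Lemma~\ref{lem: M1}(d) we have $S_0 \leq S_n$, so $a \geq 2$, but then a permutation of $(2, a, a)$ with $a \geq 2$ satisfies (M1) (not (M2)), contradicting our hypothesis on $S_n$. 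Hence we may pick $i_{n+1} \in \{j_1, j_2\}$ with $S_{n+1} := \gamma_{i_{n+1}}(S_n) > S_n$ strictly, continuing the chain.

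This produces infinitely many distinct elements of $\Gamma(S)$, the desired contradiction; so $S_0 = (2,2,2)$, completing the proof as sketched above. The only non-routine ingredient is the subclaim that both non-backtracking successors of an (M2) element cannot coincide with it, and this is precisely where the structural distinction between (M1) and (M2) is exploited.
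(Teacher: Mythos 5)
Your proof is correct, but it attacks the problem from the opposite end of the orbit from the paper. The paper picks a \emph{maximal} element $S'$ of the finite set $\Gamma(\mathrm{Sk}(M))$: since $\gamma_k(S')$ differs from $S'$ in a single entry, the two are always comparable, so maximality forces $\gamma_k(S')\le S'$ for all $k$; on the other hand $S'$ is cluster-positive, hence satisfies (M1) or (M2) by Lemma~\ref{lem: M3}, so $S'\le\gamma_i(S')$ for at least two indices. The resulting two equalities give $p=q$ and $r=2$, and then $\gamma_3(S')=(p,p,p^2-2)\le S'$ together with $p\ge 2$ gives $p=2$, whence $S'=(2,2,2)$ and Lemma~\ref{lem: fixed point} finishes. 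You instead start from the \emph{minimal} element (the (M1) representative supplied by Theorem~\ref{thm: fundamental domain}) and show that if it is not $(2,2,2)$ one can grow an infinite, strictly increasing, non-backtracking chain, contradicting finiteness. The one genuinely new ingredient you need --- that an (M2) element cannot have both of its non-backtracking successors equal to itself, since the two equalities would make it a permutation of $(a,a,2)$ with $a\ge 2$, which satisfies (M1) --- is argued correctly, and it is exactly the step the paper sidesteps by working at the top of the orbit rather than the bottom. (Minor slip: in the inductive step you write $\gamma_{i_n}(S_n)\nleq S_n$, but what you need, and what actually holds since $S_{n-1}\le S_n$ differ in the $i_n$-th entry only, is $S_n\nleq\gamma_{i_n}(S_n)=S_{n-1}$.) The paper's route is shorter and uses less machinery (no fundamental domain, no chain construction); yours costs an extra subclaim but explicitly exhibits the infinite ascending chain inside any non-fixed cluster-positive orbit, which is slightly more information.
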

By considering the fact of \cite[Thm.~6.1]{FST12a} and \cite[Thm.~5.13]{FST12b}, we may show it. However, we give an alternative proof by focusing on the partial order $\leq$.
\begin{proof}
Assume $|\Gamma(M)|<\infty$ and set $S = \mathrm{Sk}(M)$. Then, since the map $\mathrm{Sk}:\Gamma(M) \to \Gamma(S)$ is surjective, we have $|\Gamma(S)| \leq |\Gamma(M)| < \infty$. Thus, there is a maximal element $S' \in \Gamma(S)$. Then, since $S'$ satisfies (M1) or (M2), $S' \leq \gamma_i(S'),\gamma_{j}(S')$ must hold for at least two indices $i,j=1,2,3$. Without loss of generality, we may assume $i=1$ and $j=2$. Then, since $S'$ is maximal, it also holds that $\gamma_{1}(S'),\gamma_2(S'), \gamma_3(S') \leq S'$ and, by $S' \leq \gamma_1(S'),\gamma_2(S')$, we have $\gamma_1(S')=\gamma_2(S')=S'$. Set $S'=(p,q,r)$. The equalities $\gamma_1(S')=\gamma_2(S')=S'$ imply $p=q$ and $r=2$. Moreover, since $\gamma_3(S')=(p,p,p^2-2) \leq S'$, we need $p^2-2 \leq 2$. Since $p \geq 2$, the inequality $p^2-2 \leq 2$ implies $p=2$. Thus, a maximum element $M'$ should satisfy $\mathrm{Sk}(M')=(2,2,2)$. By Lemma~\ref{lem: fixed point}, $M'$ is a fixed point and $M'$ must be expressed as in (\ref{eq: fixed point}). Moreover, since $\Gamma(M)=\Gamma(M')=\{M'\}$, $M=M'$ holds.
\end{proof}

\subsection{Surjectivity of the Markov constants}
In \cite[Thm.~1.2~(3)]{BBH11}, they showed that there are no integral cluster-positive elements $S$ with $0<C(S)<4$. Moreover, an element $S$ satisfying (M1) and $C(S)=0$ is only $S=(3,3,3)$. On the other hand, by extending integer skew-symmetrizable matrices $\mathcal{M}_{\mathbb{Z}}^{\mathrm{CP}}$ (or $\widehat{\mathcal{S}}^{\mathrm{CP}}$), we can find some elements with $0\leq C(S)<4$ as in Example~\ref{ex: examples of 0<C<4}. More strongly, we obtain the following result.

\begin{thm}\label{thm: surjectivity}
The maps $C:{\mathcal{M}}_{\mathbb{Z}}^{\mathrm{CP}} \to \mathbb{Z}_{\leq 4}$ and $C:\widehat{\mathcal{S}}^{\mathrm{CP}} \to \mathbb{Z}_{\leq 4}$ are surjective.
\end{thm}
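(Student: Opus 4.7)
The plan is to construct, for every integer $n \leq 4$, an explicit cluster-positive element of $\mathcal{M}_{\mathbb{Z}}$ with Markov constant $n$, and then transfer the result to $\widehat{\mathcal{S}}^{\mathrm{CP}}$ via the identity $C(\mathrm{Sk}(M)) = C(M)$. A single one-parameter family will suffice.

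Concretely, for each integer $a \geq 5$ I would consider
\[
M_a = \begin{pmatrix} a & 1 & 3 \\ 1 & a & 3 \end{pmatrix}.
\]
The first check is that $M_a \in \mathcal{M}_{\mathbb{Z}}^{+}$: the skew-symmetrizability condition (\ref{eq: properties of M}) reduces to $xyz = 3a = x'y'z'$, which is immediate, and the underlying cyclic matrix $B_a$ admits the positive symmetrizer $\mathrm{diag}(a,a,1)$. A direct computation gives
\[
C(M_a) = xx' + yy' + zz' - xyz = a + a + 9 - 3a = 9 - a,
\]
and the associated triple is $\mathrm{Sk}(M_a) = (\sqrt{a}, \sqrt{a}, 3) \in \widehat{\mathcal{S}}^{+}$ (note that $(\sqrt{a})^2, 3^2, \sqrt{a}\cdot\sqrt{a}\cdot 3$ are all integers, as required).

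To see $M_a$ is cluster-positive I would apply Proposition~\ref{prop: 1} to $\mathrm{Sk}(M_a)$: cluster-positivity is equivalent to $\sqrt{a}, 3 \geq 2$ together with $C(\mathrm{Sk}(M_a)) \leq 4$, i.e.\ $a \geq 4$ and $9-a\leq 4$ respectively, both of which hold precisely on $a \geq 5$. Hence $\mathrm{Sk}(M_a) \in \widehat{\mathcal{S}}^{\mathrm{CP}}$, and by Proposition~\ref{prop: relationship between B and S}~(a), $M_a \in \mathcal{M}_{\mathbb{Z}}^{\mathrm{CP}}$. Letting $a$ run through $\{5,6,7,\dots\}$, the Markov constant $9-a$ sweeps through $\{4,3,2,1,0,-1,-2,\dots\} = \mathbb{Z}_{\leq 4}$, which establishes surjectivity of $C$ on $\mathcal{M}_{\mathbb{Z}}^{\mathrm{CP}}$; the surjectivity on $\widehat{\mathcal{S}}^{\mathrm{CP}}$ is witnessed by the same triples $\mathrm{Sk}(M_a)$, since $C(\mathrm{Sk}(M_a)) = C(M_a)$.

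The only nontrivial step is locating such a family. In the skew-symmetric setting of \cite{BBH11} no analogous family exists — integer triples $(p,q,r)$ with $p,q,r \geq 2$ admit no solutions with $0 < C < 4$. What makes the skew-symmetrizable case flexible is that $\widehat{\mathcal{S}}$ demands only $p^2, q^2, r^2, pqr \in \mathbb{Z}$, which permits irrational entries like $\sqrt{a}$. The family $(\sqrt{a}, \sqrt{a}, 3)$ exploits exactly this: fixing $r = 3$ and one of $p=q=\sqrt{a}$ decouples the problem into a linear-in-$a$ equation $C = 9-a$, and consecutive integer shifts of $a$ produce consecutive integer shifts of $C$. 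Once this family is spotted, every remaining check — skew-symmetrizability, integrality in $\widehat{\mathcal{S}}$, and cluster-positivity via Proposition~\ref{prop: 1} — is immediate.
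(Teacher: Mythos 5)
Your proof is correct and takes essentially the same approach as the paper: the paper exhibits the explicit family $S=(\sqrt{5(5-n)},2\sqrt{5-n},\sqrt{5})$ in $\widehat{\mathcal{S}}^{\mathrm{CP}}$ and transfers via $C(\mathrm{Sk}(M))=C(M)$, and in a remark even records your exact family $(\sqrt{9-n},\sqrt{9-n},3)$ as an alternative witness. Your verifications (integrality, skew-symmetrizability, and cluster-positivity via Proposition~\ref{prop: 1}) are all sound.
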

\begin{proof}
Recall that the map $\mathrm{Sk}:\mathcal{M} \to \mathcal{S}$ satisfies $C(\mathrm{Sk}(M))=C(M)$. It implies $C({\mathcal{M}}_{\mathbb{Z}}^{\mathrm{CP}})=C(\widehat{\mathcal{S}}^{\mathrm{CP}})$. Thus, it suffices to show that $C(\widehat{\mathcal{S}}^{\mathrm{CP}})=\mathbb{Z}_{\leq 4}$.
For any $n \in \mathbb{Z}_{\leq 4}$, set $S=(\sqrt{5(5-n)},2\sqrt{5-n},\sqrt{5}) \in \widehat{\mathcal{S}}^{\mathrm{CP}}$. Then, we obtain $C(S)=n$.
\end{proof}
\begin{rem}
Of course, this choice of $S$ is not unique. For example, we may take $S=(\sqrt{-n+9},\sqrt{-n+9},3)$ for $C(S)=n$.
\end{rem}
Next, we give all elements $S \in \widehat{\mathcal{S}}^{\mathrm{CP}}$ satisfying $0 \leq C(S)<4$ and (M1). The classification in this region is important by the following property.
\begin{prop}[{\cite[Lem.~3.3]{BBH11}}]\label{prop: acyclic C>0}
Let $S \in \mathcal{S}^{+}$ be not cluster-positive. Then, we have $C(S) \geq 0$. In particular, for any $S \in \mathcal{S}^{+}$, we have the following statements.
\\
(a)\ If $C(S)>4$, then $S$ is not cluster-positive.
\\
(b)\ If $C(S)<0$, then $S$ is cluster-positive.
\end{prop}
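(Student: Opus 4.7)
\medskip
\noindent\textbf{Proof proposal.}
The main claim is the first sentence; parts (a) and (b) will then drop out as direct contrapositives, using Lemma~\ref{lem: 2.1} for (a) (cluster-positive $\Rightarrow C(S)\le 4$) and the main claim itself for (b).

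My plan for the main statement is to exploit the invariance of $C$ under $\gamma_k$ (Proposition~\ref{prop: relationship between B and S}~(c)) to reduce to a one-step calculation. Concretely, suppose $S\in\mathcal{S}^{+}$ is not cluster-positive. Since $S$ itself lies in $\mathcal{S}^{+}$ but some $\gamma_t(S)$ does not, I would walk along a shortest bad sequence: choose $S'\in\Gamma(S)\cap\mathcal{S}^{+}$ and an index $k\in\{1,2,3\}$ with $\gamma_k(S')\notin\mathcal{S}^{+}$. Writing $S'=(p,q,r)$ with $p,q,r>0$ and, by the evident symmetry of $\mathcal{S}^{+}$ under permutation, assuming $k=1$, the definition $\gamma_1(S')=(qr-p,q,r)$ and the failure of positivity force
\begin{equation}
qr-p\le 0,\quad\text{i.e.,}\quad p\ge qr.
\end{equation}

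Now comes the core one-line computation. By invariance of $C$,
\begin{equation}
C(S)=C(S')=p^2+q^2+r^2-pqr=p(p-qr)+q^2+r^2,
\end{equation}
and the right-hand side is at least $q^2+r^2>0$ because $p>0$ and $p-qr\ge 0$. Thus $C(S)>0$, and in particular $C(S)\ge 0$, which is the claim.

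For (a): if $C(S)>4$, then Lemma~\ref{lem: 2.1} precludes $S$ from being cluster-positive. For (b): if $C(S)<0$, then the main statement just proved (taken as a contrapositive) forces $S$ to be cluster-positive. I do not foresee any real obstacle: the only subtlety is making sure that the ``first bad step'' $S'\to\gamma_k(S')$ genuinely exists, which follows because $S\in\mathcal{S}^{+}$ itself and $\Gamma(S)$ is built up one mutation at a time from $S$, so somewhere along any witnessing path the positivity must first fail.
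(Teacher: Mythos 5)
Your proposal is correct and follows essentially the same route as the paper: locate the first failure of positivity along a mutation path and conclude by a one-line sign computation on the invariant $C$. The only cosmetic difference is that you evaluate $C$ at the last positive element $S'=(p,q,r)$ using $p\ge qr$, whereas the paper evaluates it at the element just after the failure, which has two positive entries and one non-positive entry so that $-pqr\ge 0$; both give $C(S)\ge 0$ immediately, and your handling of (a) via Lemma~\ref{lem: 2.1} and of (b) via the contrapositive matches the intended reading.
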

For any {\em non} cluster-positive element $S \in \mathcal{S}^{+}$, there is $S' \in \mathcal{S}$ such that two entries are positive and one entry is not positive. By considering $C(S')$, we have $C(S)=C(S') \geq 0$.
\par
Thanks to Proposition~\ref{prop: acyclic C>0}, when $C(S)<0$ or $C(S)>4$, then we may determine whether $S$ is cluster-positive or not by focusing on only its Markov constant. On the other hand, when $0 \leq C(S) \leq 4$, we need to consider something other than $C(S)$. In the case of $C(S)=4$, we give a condition in Lemma~\ref{lem: 2.1.2}.
\par
Later, we give a method to obtain all orbits satisfying $C(S)=C$ with $C<4$. Recall that, for any cluster-positive elemenet $S \in \widehat{\mathcal{S}}^{\mathrm{CP}}$ corresponding to an integer skew-symmetrizable matrix, its orbit $\Gamma(S)$ may be represented by the (M1) element (Theorem~\ref{thm: fundamental domain}).
\begin{lem}\label{lem: restriction of r} 
Let $S=(p,q,r) \in {\mathcal{S}}^{+}$ satisfy (M1) and $p \geq q \geq r$ and let $C \in \mathbb{Z}_{<4}$.
\\
(a)\ There exists a real number $R > 2$ satisfying $C(R,R,R)=C$. Moreover, this $R$ is unique.
\\
(b)\ If $C(S) \geq C$, then we have $r \leq R$, where $R$ is the real number satisfying $C(R,R,R)=C$.
\end{lem}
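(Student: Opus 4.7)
\medskip
\noindent\textbf{Proof proposal.}
The plan is to reduce both parts to an analysis of the one-variable function $f(R) = C(R,R,R) = 3R^{2} - R^{3}$ on the interval $[2, \infty)$, using Lemma~\ref{lem: 2.1.2}~(1) to control $C(S)$ in terms of $f(r)$.

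For part (a), I would first compute $f'(R) = 6R - 3R^{2} = 3R(2-R)$ and observe that this is strictly negative on $(2, \infty)$, so $f$ is strictly decreasing there. The boundary value is $f(2) = 4$, and $f(R) \to -\infty$ as $R \to \infty$, so by the intermediate value theorem, for any $C < 4$ there exists a unique $R \in (2, \infty)$ with $f(R) = C$. (Strictness of $R > 2$ comes for free since $f(2) = 4 > C$.)

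For part (b), I would combine Lemma~\ref{lem: M1}~(a), which gives $r \geq 2$ from the (M1) hypothesis, with Lemma~\ref{lem: 2.1.2}~(1), which under the ordering $p \geq q \geq r$ yields $C(S) \leq C(r,r,r) = f(r)$. The assumption $C(S) \geq C = f(R)$ then gives $f(r) \geq f(R)$, and the strict monotonicity of $f$ on $[2, \infty)$ established in (a) forces $r \leq R$.

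There is no genuine obstacle in this lemma; it is essentially a brief calculus argument. The only point worth being careful about is that one must invoke $r \geq 2$ before applying the monotonicity of $f$, since $f$ is not monotone on all of $(0, \infty)$ (it increases on $[0,2]$), so without the lower bound on $r$ the inequality $f(r) \geq f(R)$ would not give $r \leq R$. The (M1) hypothesis is precisely what supplies this lower bound, so the two parts fit together cleanly.
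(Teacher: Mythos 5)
Your proposal is correct and follows essentially the same route as the paper: both parts reduce to the strict monotonicity of $C(x,x,x)=3x^2-x^3$ on $[2,\infty)$ together with $C(2,2,2)=4$ for (a), and part (b) combines $r\geq 2$ from (M1) with Lemma~\ref{lem: 2.1.2}~(1) exactly as in the paper's argument. No gaps.
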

\begin{proof}
(a)\ Since $\frac{d}{dx}C(x,x,x)=3x(2-x)<0$ for $x>2$, $C(x,x,x)$ is monotonically decreasing for $x \geq 2$. Moreover, since $C(2,2,2)=4$ and $\lim_{x \to \infty}C(x,x,x)=-\infty$, there is $R > 2$ satisfying $C(R,R,R)=C$ uniquely.
\\ 
(b)\ Since $S$ satisfies (M1), we have $r \geq 2$. By Lemma~\ref{lem: 2.1.2}~(1), we have
\begin{equation}
\begin{aligned}
C(S)&\leq C(r,r,r).
\end{aligned}
\end{equation}
Since $C=C(R,R,R) \leq C(S) \leq C(r,r,r)$, we have $C(R,R,R)\leq C(r,r,r)$. Moreover, since $C(x,x,x)$ is monotonically decreasing, we have $r \leq R$. 
\end{proof}
\begin{lem}\label{lem: restriction of p}
Let $S=(p,q,r) \in \mathcal{S}^{+}$ satisfy (M1) and $p \geq q \geq r$. Let $C \in \mathbb{Z}_{<4}$. If $C= C(S)$, then we have $2<r$ anf $p \leq r \sqrt{\frac{r^2-C}{r^2-4}}$.
\end{lem}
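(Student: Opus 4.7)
The plan is to obtain the two conclusions separately. First, since $S$ satisfies (M1), Lemma~\ref{lem: M1}(a) gives $r \geq 2$; if $r = 2$ held, then
\begin{equation*}
C(S) = p^2 + q^2 + 4 - 2pq = (p-q)^2 + 4 \geq 4,
\end{equation*}
contradicting $C(S) = C < 4$. Hence $r > 2$, and in particular $r^2 - 4 > 0$ and $r^2 - C > 0$.

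For the bound on $p$, I would read $C(S) = C$ as a quadratic equation in $q$, namely
\begin{equation*}
q^2 - pr\, q + (p^2 + r^2 - C) = 0,
\end{equation*}
and denote its two roots by $q_- \leq q_+$ (so $q_- + q_+ = pr$). The crucial step is to show that $q$ must coincide with $q_-$. If instead $q = q_+$, then $q_- = pr - q \geq pr - p = p(r-1) > p$ because $r > 2$, which contradicts $q_- \leq q_+ = q \leq p$.

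Once $q = q_-$ is established, I would apply the (M1) inequality $qr \geq 2p$ in the form $q_- \geq 2p/r$ and substitute the explicit formula $q_- = \bigl(pr - \sqrt{p^2(r^2-4) - 4(r^2-C)}\bigr)/2$. Rearranging isolates the square root as
\begin{equation*}
\frac{p(r^2-4)}{r} \geq \sqrt{p^2(r^2-4) - 4(r^2-C)}.
\end{equation*}
Both sides are nonnegative (the left by $r > 2$ and $p > 0$), so squaring preserves the inequality; after clearing $r^2$, the term $p^2 r^2(r^2-4)$ cancels and what remains simplifies to $p^2(r^2-4) \leq r^2(r^2-C)$, i.e., the desired $p \leq r\sqrt{(r^2-C)/(r^2-4)}$. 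The only subtle point in the argument is the forced choice $q = q_-$; everything after that is routine algebra enabled by $r > 2$.
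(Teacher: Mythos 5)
Your proposal is correct and follows essentially the same route as the paper: rule out $r=2$ via $C(S)=(p-q)^2+4$, solve the quadratic in $q$, select the smaller root, and square the (M1) inequality $qr\geq 2p$. The only cosmetic difference is that you justify the choice $q=q_-$ by Vieta's formulas ($q_-+q_+=pr$), whereas the paper observes directly that $q_+\geq \tfrac{1}{2}pr>p$; both are valid.
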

\begin{proof}
Since $C(S)=C<4$, we have $r>2$. (If $r=2$, we have $C(S)=(p-q)^2+4 \geq 4$.)
By solving the equation $C(S)=p^2+q^2+r^2-pqr=C$ for $q$, we have $q=\frac{1}{2}(pr\pm\sqrt{p^2(r^2-4)-4(r^2-C)})$.
Note that $\sqrt{p^2(r^2-4)-4(r^2-C)}$ is a real number since $q \in \mathbb{R}$.
If we choose $q = \frac{1}{2}(pr+\sqrt{p^2(r^2-4)-4(r^2-C)})$, then we have $q \geq \frac{1}{2}pr > p$ by $r > 2$. Thus, since $q \leq p$, we should choose $q= \frac{1}{2}(pr-\sqrt{p^2(r^2-4)-4(r^2-C)})$. Moreover, since $S$ satisfies (M1), we have $q \geq \frac{2}{r}p$ by Proposition~\ref{prop: charac. of M}~(1). Thus, it holds that
\begin{equation}
\frac{1}{2}(pr-\sqrt{p^2(r^2-4)-4(r^2-C)}) \geq \frac{2}{r}p.
\end{equation}
It implies that $(\frac{r}{2}-\frac{2}{r})p \geq \frac{1}{2}\sqrt{p^2(r^2-4)-4(r^2-C)}$. Since both sides are positive, it implies that
\begin{equation}
\left(\frac{r}{2}-\frac{2}{r}\right)^2 p^2 \geq \frac{1}{4}\{p^2(r^2-4)-4(r^2-C)\},
\end{equation}
and we have
\begin{equation}
r^2-C \geq \frac{r^2-4}{r^2}p^2.
\end{equation}
Since $r^2-4, r^2 > 0$, we have $p^2\leq r^2\frac{r^2-C}{r^2-4}$. Note that $r^2-C \geq 0$ since $r^2 \geq 4 \geq C=C(S)$ by Lemma~\ref{lem: 2.1.1}. Thus, this inequality implies that $p \leq r\sqrt{\frac{r^2-C}{r^2-4}}$.
\end{proof}

\begin{prop}
Let $S=(p,q,r) \in \widehat{\mathcal{S}}^{\mathrm{CP}}$ satisfy (M1) and $p \geq q \geq r$ and $C \in \mathbb{Z}_{<4}$. Set $R > 2$ such that $C(R,R,R)=C$. If $C(S)=C$, then we have
\begin{equation}
2 < r \leq R,\quad q \leq p \leq r\sqrt{\frac{r^2-C}{r^2-4}}.
\end{equation}
In particular, for each $C \in \mathbb{Z}_{<4}$, the number of possible $S \in \widehat{\mathcal{S}}^{\mathrm{CP}}$ satisfying (M1) and $C(S)=C$ is finite.
\end{prop}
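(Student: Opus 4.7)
The plan is to read off the three displayed inequalities directly from Lemmas~\ref{lem: restriction of r} and~\ref{lem: restriction of p}, and then to deduce the finiteness statement by combining these bounds with the square-integrality built into $\widehat{\mathcal{S}}$. So the proof falls naturally into two short parts.

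For the first part, I would note that the hypothesis $C(S)=C$ puts us within the scope of both preceding lemmas. Lemma~\ref{lem: restriction of p} immediately yields $r>2$ together with the upper bound $p\leq r\sqrt{(r^2-C)/(r^2-4)}$, while Lemma~\ref{lem: restriction of r} (part~(a) producing the required $R$, and part~(b) applied with $C(S)\geq C$) yields $r\leq R$. The remaining inequality $q\leq p$ is built into the hypothesis $p\geq q\geq r$, so nothing further is needed.

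For the second part, I would invoke the earlier characterisation of $\widehat{\mathcal{S}}$, namely that $(p,q,r)\in\widehat{\mathcal{S}}$ forces $p^{2},q^{2},r^{2},pqr$ to all be integers; together with $p,q,r>0$ each coordinate is uniquely determined by its square. The chain $2<r\leq R$ then confines $r^{2}$ to the integers in the interval $(4,R^{2}]$, which is a finite set and determines $r$. For each such $r$, the bound $r\leq p\leq r\sqrt{(r^2-C)/(r^2-4)}$ (with denominator a positive integer, since $r^{2}\geq 5$) confines $p^{2}$ to a finite range of integers, and hence determines $p$. Finally, with $p$ and $r$ fixed, the condition $C(S)=C$ rewrites as the quadratic $q^{2}-(pr)q+(p^{2}+r^{2}-C)=0$, which has at most two real roots, so $q$ takes at most two values. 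Multiplying the three finite bounds gives finiteness of ordered triples with $p\geq q\geq r$; dropping the ordering at most multiplies the count by $|\mathfrak{S}_{3}|=6$, since permuting coordinates preserves both the (M1) condition and the Markov constant.

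No step is particularly delicate; this is essentially a bookkeeping argument. The only point requiring a moment of vigilance is the observation that positivity together with integrality of $p^{2},q^{2},r^{2}$ already discretises each coordinate, after which the successive ranges for $r$, then $p$, then $q$ are all manifestly finite.
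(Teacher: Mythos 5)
Your proposal is correct and matches the paper's (implicit) argument: the displayed inequalities are exactly the conclusions of Lemma~\ref{lem: restriction of r} and Lemma~\ref{lem: restriction of p}, and finiteness follows from the integrality of $p^2,q^2,r^2$ in the characterization of $\widehat{\mathcal{S}}$ together with these bounds. Your extra care in pinning down $q$ via the quadratic $q^2-(pr)q+(p^2+r^2-C)=0$ and accounting for permutations is a harmless refinement of the same bookkeeping.
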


\begin{ex}\label{ex: examples of 0<C<4}
Consider an element $S=(p,q,r) \in \widehat{\mathcal{S}}^{\mathrm{CP}}$ satisfying (M1) and $0 \leq C(S) < 4$. The real number $R > 2$ satisfying $C(R,R,R) = 3R^2 - R^3=0$ is $R=3$. Thus, by Lemma~\ref{lem: restriction of r}, under the assumption $p \geq q \geq r$, we need $2< r \leq 3$. Since $r^2 \in \mathbb{Z}$, we have $r=\sqrt{5},\sqrt{6},\sqrt{7},\sqrt{8},3$. Moreover, by setting $C=C(S)$, we have $p \leq r\sqrt{\frac{r^2-C}{r^2-4}} \leq 3\times\sqrt{\frac{3^2-0}{\sqrt{5}^2-4}}=9$ by Lemma~\ref{lem: restriction of p}. In particular, by considering $p^2,q^2,r^2 \in \mathbb{Z}$, the number of possible $S$ are at most $5 \times 81^2$. (By considering other conditions such as $pqr \in \mathbb{Z}$ and $r \leq q \leq p$, the number of elements which we need to check is much smaller than $5\times 81^2$.)
We may write all cluster-positive orbits satisfying $0\leq C(S)<4$ as follows:
\begin{prop}
For any element $S \in \widehat{\mathcal{S}}$ corresponding to an integer skew-symmetrizable matrix,\\
(a) $S$ is cluster-positive with $C(S)=0$ if and only if $S$ belongs to any of the following sets or their permutations.
\begin{equation}
\Gamma(5,2\sqrt{5},\sqrt{5}),\Gamma(3\sqrt{2},2\sqrt{3},\sqrt{6}),\Gamma(4,2\sqrt{2},2\sqrt{2}),\Gamma(3,3,3).
\end{equation}
(b) $S$ is cluster-positive with $C(S)=1$ if and only if $S$ belongs to any of the following sets or their permutations.
\begin{equation}
\Gamma(2\sqrt{5},4,\sqrt{5}),\Gamma(\sqrt{15},\sqrt{10},\sqrt{6}),\Gamma(\sqrt{14},2\sqrt{2},\sqrt{7}),\Gamma(3,2\sqrt{2},2\sqrt{2}).
\end{equation}
(c) $S$ is cluster-positive with $C(S)=2$ if and only if $S$ belongs to any of the following sets or their permutations.
\begin{equation}
\Gamma(\sqrt{15},2\sqrt{3},\sqrt{5}),\Gamma(2\sqrt{3},2\sqrt{2},\sqrt{6}),\Gamma(3,\sqrt{7},\sqrt{7}).
\end{equation}
(d) $S$ is cluster-positive with $C(S)=3$ if and only if $S$ belongs to any of the following sets or their permutations.
\begin{equation}
\Gamma(\sqrt{10},2\sqrt{2},\sqrt{5}),\Gamma(3,\sqrt{6},\sqrt{6}).
\end{equation}
(e) $S$ is cluster-positive with $C(S)=4$ if and only if $S$ belongs to $\Gamma(p,p,2)$ for some $p \geq 2$ with $p^2 \in \mathbb{Z}$ or their permutations.
\end{prop}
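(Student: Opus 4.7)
The plan is to handle parts (a)--(d) uniformly by a finite search, and then treat (e) separately using Lemma~\ref{lem: 2.1.2}~(2).

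For a fixed $C\in\{0,1,2,3\}$, by Theorem~\ref{thm: fundamental domain} together with Proposition~\ref{prop: relationship between B and S}, every cluster-positive orbit in $\widehat{\mathcal{S}}^{\mathrm{CP}}$ with Markov constant $C$ has a (unique) representative $S=(p,q,r)$ satisfying (M1), which we may further normalize by a permutation to assume $p\geq q\geq r$. First, I determine the unique $R>2$ with $C(R,R,R)=C$ from Lemma~\ref{lem: restriction of r}~(a): solving $R^3-3R^2+C=0$ gives $R=3$ for $C=0$, $R=1+\sqrt{3}$ for $C=2$, and explicit numerical bounds $R\in(2.88,3)$ for $C=1$ and $R\in(2.5,2.6)$ for $C=3$. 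By Lemma~\ref{lem: restriction of r}~(b) and $r^2\in\mathbb{Z}$, this restricts $r^2$ to the finite list $\{5,6,7,8\}$ for $C=0,1$, $\{5,6,7\}$ for $C=2$, and $\{5,6\}$ for $C=3$.

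Next, for each admissible $r$, Lemma~\ref{lem: restriction of p} gives the explicit bound $p\leq r\sqrt{(r^2-C)/(r^2-4)}$, and combined with $p^2\in\mathbb{Z}$ this leaves only finitely many candidates for $p$. For each such $p$, I solve $C(S)=p^2+q^2+r^2-pqr=C$ for $q$ (picking the smaller root, as argued in the proof of Lemma~\ref{lem: restriction of p}) and retain those triples with $q^2\in\mathbb{Z}$, $pqr\in\mathbb{Z}$, $p\geq q\geq r$, and (M1), i.e.\ $qr\geq 2p$. The candidates surviving this filtering are precisely the ones listed in (a)--(d). Each survivor is cluster-positive by Proposition~\ref{prop: 1} (equivalently Lemma~\ref{lem: 2.3}), since (M1) forces $p,q,r\geq 2$ and $C(S)\leq 4$.

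For part (e), if $S\in\widehat{\mathcal{S}}^{\mathrm{CP}}$ with $C(S)=4$, then its (M1)-representative $(p,q,r)$ with $p\geq q\geq r$ satisfies $C(p,q,r)=4$, and Lemma~\ref{lem: 2.1.2}~(2) forces $r=2$ and $p=q$. The integrality conditions $p^2,q^2,r^2,pqr\in\mathbb{Z}$ reduce to $p^2\in\mathbb{Z}$ (since $2p^2\in\mathbb{Z}$ is then automatic), and any such $(p,p,2)$ with $p\geq 2$ lies in $\widehat{\mathcal{S}}$ and satisfies (M1), hence is cluster-positive.

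The main obstacle is the bookkeeping in parts (a)--(d): for each $(C,r)$ one must enumerate all integer values of $p^2$ in the permitted range, solve the quadratic for $q$, and check that $q^2\in\mathbb{Z}$ and $pqr\in\mathbb{Z}$. This is routine but delicate, and one must also verify that distinct triples in the list do in fact belong to distinct $\Gamma$-orbits --- which is automatic from the uniqueness of the (M1)-representative guaranteed by Theorem~\ref{thm: fundamental domain} (after permutation). No new theoretical input is needed beyond the bounds already established; the whole argument is a finite-case exhaustion driven by Lemmas~\ref{lem: restriction of r} and~\ref{lem: restriction of p}.
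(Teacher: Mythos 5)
Your overall strategy is exactly the one the paper uses (the paper presents it in Example~\ref{ex: examples of 0<C<4} rather than as a formal proof): reduce to the unique (M1)-representative with $p\geq q\geq r$ via Proposition~\ref{prop: integer lemma} and Theorem~\ref{thm: fundamental domain}, bound $r$ by Lemma~\ref{lem: restriction of r}, bound $p$ by Lemma~\ref{lem: restriction of p}, and finish by a finite integrality check; part (e) via Lemma~\ref{lem: 2.1.2}~(2) is also the paper's route. So there is no methodological divergence to report.

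There is, however, one concrete slip that would make your search fail for part (a). Lemma~\ref{lem: restriction of r}~(b) gives the \emph{non-strict} bound $r\leq R$, and for $C=0$ one has $R=3$ exactly, so the admissible values are $r^2\in\{5,6,7,8,9\}$, not $\{5,6,7,8\}$ as you state. The boundary value $r=3$ is precisely where the extremal orbit $\Gamma(3,3,3)$ lives (indeed $C(3,3,3)=27-27=0$), so excluding $r^2=9$ means your enumeration produces only three of the four orbits claimed in (a) and the ``only if'' direction is not established. The fix is trivial (restore $r=3$ to the $C=0$ case and note that the only (M1) triple with $r=3$ and $C=0$ is $(3,3,3)$, by Lemma~\ref{lem: 2.1.2}~(1) which forces $C(S)\leq C(r,r,r)=0$ with equality only at $p=q=r$), but as written the case analysis is incomplete. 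The remaining numerical ranges you give for $C=1,2,3$ are correct, and the rest of the argument, including the reduction of the integrality conditions in part (e) to $p^2\in\mathbb{Z}$, is sound.
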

\end{ex}

\section{The case (B)}\label{sec: the case (B)}
Recall that there are two classes (A) and (B) among cluster-positive elements in $\mathcal{S}$. (See Definition~\ref{dfn: A, B conditions}.) 
For the skew-symmetric case, one identification was given by  \cite[Rem.~4.6]{FT19}. Here, we slightly generalize it to the skew-symmetrizable case, and give another identification.
\begin{dfn}
For any cluster-positive element $M \in \mathcal{M}^{+}$, we define the class (A) and (B) by $\mathrm{Sk}(M)$ satisfies (A) and (B), respectively.
\end{dfn}
\begin{lem}
If a cluster-positive element $M \in M^{+}$ satisfies (B), the following statements hold. 
\\
(a) There exists a unique infinite sequence
\begin{equation}\label{eq: decreasing sequence of M}
M=M_0 \overset{\gamma_{t_1}}{\mapsto} M_1 \overset{\gamma_{t_2}}{\mapsto} M_2 \overset{\gamma_{t_3}}{\mapsto} \cdots
\end{equation}
such that $M_{i-1} \nleq M_{i}$ for any $i=1,2,\dots$.
\\
(b) This sequence has the limit $L(M)=\lim_{i \to \infty} M_{i}$. Moreover, $L(M)$ satisfies $\mathrm{Sk}(L(M))=(2,2,2)$. In particular, this limit $L(M)$ is a fixed point under the transformation $\Gamma$.
\end{lem}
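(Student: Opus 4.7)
The plan is to reduce everything to the corresponding result for $\mathcal{S}^{+}$ established in Lemma~\ref{lem: convergence}, using Lemma~\ref{lem: skew-symmetrizable M1} as the bridge. Setting $S_i=\mathrm{Sk}(M_i)$, Proposition~\ref{prop: fundamental property of DSSS}~(b) gives $S_i=\gamma_{t_i}(S_{i-1})$, while Lemma~\ref{lem: skew-symmetrizable M1} supplies the equivalence $M_{i-1}\leq M_i\Longleftrightarrow S_{i-1}\leq S_i$ at each step.

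For (a), since $M$ satisfies (B), so does $\mathrm{Sk}(M)$, and hence every element of $\Gamma(\mathrm{Sk}(M))$ satisfies (M2) by definition of (B). Thus at each stage $S_{i-1}$ satisfies $S_{i-1}\leq\gamma_k(S_{i-1})$ for precisely two of the three indices $k$, leaving a unique $t_i$ with $S_{i-1}\nleq\gamma_{t_i}(S_{i-1})$. Translating back with Lemma~\ref{lem: skew-symmetrizable M1}, this is the unique choice of $t_i$ giving $M_{i-1}\nleq M_i$. Since $M_i\in\Gamma(M)$ again satisfies the same hypothesis, the construction continues indefinitely and produces the desired unique infinite sequence.

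The main difficulty lies in (b): the convergence $\mathrm{Sk}(M_i)\to(2,2,2)$ from Lemma~\ref{lem: convergence} only controls the three products $x_ix'_i$, $y_iy'_i$, $z_iz'_i$, whereas $M_i$ has six entries that must be pinned down. The key observation is that the three ratios
\begin{equation*}
\rho_1=\frac{x_i}{x'_i},\qquad \rho_2=\frac{y_i}{y'_i},\qquad \rho_3=\frac{z_i}{z'_i}
\end{equation*}
are \emph{invariant} under every $\gamma_k$. For $\gamma_1$, using $xyz=x'y'z'$ one computes
\begin{equation*}
\frac{y'z'-x}{yz-x'}=\frac{x}{x'},
\end{equation*}
so $\rho_1$ is preserved, while $\rho_2,\rho_3$ are left untouched because $\gamma_1$ does not modify $y,y',z,z'$; the cases of $\gamma_2,\gamma_3$ are symmetric. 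Combining $x_ix'_i\to 4$ with $x_i/x'_i\equiv\rho_1$ forces $x_i\to 2\sqrt{\rho_1}$ and $x'_i\to 2/\sqrt{\rho_1}$, and analogously for the other two columns. Hence $M_i$ converges to a matrix $L(M)$ whose entries are determined by $\rho_1,\rho_2,\rho_3$. The identity $\rho_1\rho_2\rho_3=(x_iy_iz_i)/(x'_iy'_iz'_i)=1$ guarantees $L(M)\in\mathcal{M}$, and $\mathrm{Sk}(L(M))=(2,2,2)$ by construction; Lemma~\ref{lem: fixed point} then identifies $L(M)$ as a fixed point of $\Gamma$. The main obstacle is precisely this gap between $\mathcal{M}$ and $\mathcal{S}$, which the ratio-invariance trick closes.
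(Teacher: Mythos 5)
Your proof is correct. Part (a) follows the paper's own route: pass to $S_i=\mathrm{Sk}(M_i)$, use the (M2) condition to get existence and uniqueness of the index $t_i$ at each stage, and transfer back with Lemma~\ref{lem: skew-symmetrizable M1}. For part (b), however, you argue differently from the paper. The paper observes that, since $M_{i-1}\nleq M_i$ forces \emph{both} entries of the mutated column to decrease (the two defining inequalities in Lemma~\ref{lem: skew-symmetrizable M1} being equivalent), all six entries of $M_i$ are monotonically non-increasing and bounded below by $0$, hence converge; the identification $\sqrt{u_ju'_j}=2$ then comes from Lemma~\ref{lem: convergence}. You instead prove that the column ratios $x_i/x'_i$, $y_i/y'_i$, $z_i/z'_i$ are invariant under every $\gamma_k$ (a correct computation, using $xyz=x'y'z'$ to get $\frac{y'z'-x}{yz-x'}=\frac{x}{x'}$), and combine this with $x_ix'_i\to 4$ from Lemma~\ref{lem: convergence} to force $x_i\to 2\sqrt{\rho_1}$, $x'_i\to 2/\sqrt{\rho_1}$, etc. Both arguments are valid; the paper's is more elementary (monotone convergence, reusing the decreasing structure already in hand), while yours buys an explicit closed form for the limit, namely
\begin{equation*}
L(M)=\left(\begin{matrix}2\sqrt{\rho_1}&2\sqrt{\rho_2}&2\sqrt{\rho_3}\\ 2/\sqrt{\rho_1}&2/\sqrt{\rho_2}&2/\sqrt{\rho_3}\end{matrix}\right),
\end{equation*}
determined entirely by the initial ratios (with $\rho_1\rho_2\rho_3=1$ ensuring membership in $\mathcal{M}^{+}$), which is strictly more information than the paper's proof extracts.
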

\begin{proof}
Let $S=\mathrm{Sk}(M)$. Consider the sequence (\ref{eq: (M2) sequence}). Then, $S_{i-1} \nleq S_{i}$ holds. Set $M_0=M$ and $M_i=\gamma_{t_i}(M_{i-1})$. Then, by Lemma~\ref{lem: skew-symmetrizable M1}, this sequence also satisfies $M_{i-1} \nleq M_{i}$ as we desired. The uniqueness also follows from Lemma~\ref{lem: skew-symmetrizable M1}. (If there are another decreasing sequence, $S=\mathrm{Sk}(M)$ also has another decreasing sequence.) Thus, (a) holds. Next, we show (b). Let $M_i=\left(\begin{smallmatrix}
a_{1;i} & a_{2;i} & a_{3;i}\\
a'_{1;i} & a'_{2;i} & a'_{3;i}
\end{smallmatrix}\right)$. By the assumption, each $a_{j;i}$ and $a'_{j;i}$ ($j=1,2,3$) are monotonically decreasing for $i=0,1,2,\dots$. Moreover, since $M$ is cluster-positive, we have $a_{j;i}, a'_{j;i} \geq 0$. Thus, each component $a_{j;i},a'_{j;i}$ should converge to some $u_{i},u'_{i} \geq 0$, and we have
$L(M)=\left(\begin{matrix}
u_{1} & u_{2} & u_{3}\\
u'_{1} & u'_{2} & u'_{3}
\end{matrix}\right)$. Moreover, since $\mathrm{Sk}(S_i) \overset{i \to \infty}{\longrightarrow} (2,2,2)$ by Lemma~\ref{lem: convergence}, we have $\sqrt{a_{j;i}a'_{j;i}} \overset{i \to \infty}{\longrightarrow} 2$. This implies $\sqrt{u_ju'_j}=2$ ($j=1,2,3$) and $\mathrm{Sk}(L(M))=(2,2,2)$, which tells us that $L(M)$ is a fixed point by Lemma~\ref{lem: fixed point}.
\end{proof}
By considering Lemma~\ref{lem: convergence} and Lemma~\ref{lem: 2.1.2}, we have the following theorem.
\begin{thm}\label{thm: identification of (B)}
Let $M \in \mathcal{M}^{+}$ be cluster-positive element. 
\\
(a)\ This element $M$ satisfies (A) if and only if $C(M)<4$ or $\mathrm{Sk}(M)$ belongs to $\Gamma(p,p,2)$, $\Gamma(p,2,p)$, or $\Gamma(2,p,p)$ for some $p \geq 2$.
\\
(b)\ This element $M$ satisfies (B) if and only if $C(M)=4$ and $\mathrm{Sk}(M)$ does not belong to $\Gamma(p,p,2)$, $\Gamma(p,2,p)$, and $\Gamma(2,p,p)$ for any $p \geq 2$. Moreover, in this case, there exists a decreasing sequence (\ref{eq: decreasing sequence of M}) uniquely, and it converges to some fixed point $L(M)$.
\end{thm}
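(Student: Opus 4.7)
The plan is to deduce (a) and (b) in parallel from the fact that the dichotomy (A)/(B) on cluster-positive elements is exclusive and exhaustive, together with Lemma~\ref{lem: 2.1} which guarantees $C(M)=C(\mathrm{Sk}(M))\leq 4$ for any cluster-positive $M$. Consequently (a) and (b) are logically complementary, and the ``moreover'' clause in (b) is exactly the lemma immediately preceding this theorem. So it is enough to establish the characterization in (a).

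The easy direction: suppose $\mathrm{Sk}(M)\in\Gamma(p,p,2)\cup\Gamma(p,2,p)\cup\Gamma(2,p,p)$ for some $p\geq 2$. I would check directly, using Proposition~\ref{prop: charac. of M}(1), that each of the triples $(p,p,2)$, $(p,2,p)$, $(2,p,p)$ satisfies (M1); for instance, for $(p,p,2)$ with the ordering $p\geq p\geq 2$ one needs $p\cdot 2\geq 2p$, which is immediate. Hence $\Gamma(\mathrm{Sk}(M))$ contains an element satisfying (M1), so by Lemma~\ref{lem: M1}(b) every other element in the orbit satisfies (M2); thus $\mathrm{Sk}(M)$, and hence $M$, belongs to class (A). Similarly, if $C(M)<4$, then $M$ cannot be of type (B): by Lemma~\ref{lem: convergence}(b) every (B) element has Markov constant exactly $4$. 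Thus $M$ is of type (A).

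For the converse, suppose $M$ satisfies (A) and $C(M)=4$. Then $L(\mathrm{Sk}(M))\in\Gamma(\mathrm{Sk}(M))$ satisfies (M1) by Proposition~\ref{prop: properties of L}(b), and $C(L(\mathrm{Sk}(M)))=C(\mathrm{Sk}(M))=C(M)=4$ by Proposition~\ref{prop: properties of L}(a). After reordering the three entries of $L(\mathrm{Sk}(M))$ into decreasing order, Lemma~\ref{lem: 2.1.2}(2) forces this reordered triple to be $(p,p,2)$ for some $p\geq 2$. Undoing the permutation shows that $L(\mathrm{Sk}(M))$ itself is one of $(p,p,2)$, $(p,2,p)$, $(2,p,p)$, whence $\mathrm{Sk}(M)$ lies in the corresponding $\Gamma$-orbit. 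This completes the characterization in (a), and (b) follows by taking the negation together with the bound $C(M)\leq 4$ and the preceding lemma, which supplies both the unique decreasing sequence and its convergence to a fixed point.

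I do not anticipate any real obstacle in this argument; all the analytic content has already been packaged into Lemma~\ref{lem: convergence}, Lemma~\ref{lem: 2.1.2}, and Proposition~\ref{prop: properties of L}. The only bookkeeping subtlety is handling permutations carefully, since Lemma~\ref{lem: 2.1.2}(2) is stated under the ordering $p\geq q\geq r$; that ordering is precisely why three orbits $\Gamma(p,p,2)$, $\Gamma(p,2,p)$, $\Gamma(2,p,p)$ appear in the statement of the theorem, corresponding to the three possible positions of the entry $2$ in the (M1) representative of the orbit.
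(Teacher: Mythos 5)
Your proposal is correct and follows essentially the same route as the paper: both arguments reduce to the dichotomy (A)/(B) together with Lemma~\ref{lem: convergence} (a (B)-element has $C=4$) and Lemma~\ref{lem: 2.1.2}~(2) applied to the (M1)-representative $L(\mathrm{Sk}(M))$, with the ``moreover'' clause supplied by the lemma preceding the theorem. You merely spell out the easy direction (that $(p,p,2)$ and its permutations satisfy (M1)) in more detail than the paper does.
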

\begin{proof}
Let $S=\mathrm{Sk}(M)$.
By Lemma~\ref{lem: convergence}, if a cluster-positive element $S$ satisfies (B), then we have $C(S)=4$. Moreover, cluster-positive elements $S \in \mathcal{S}^{+}$ satisfying $C(S)=4$ and (A) should belong to $\Gamma(p,p,2)$, $\Gamma(p,2,p)$, or $\Gamma(2,p,p)$ by Lemma~\ref{lem: 2.1.2}~(2). Thus, the claim holds.
\end{proof}

\bibliographystyle{alpha}
\bibliography{bunken}

\begin{thebibliography}{FST12b}

\bibitem[BBH11]{BBH11}
A.~Beineke, T.~Br{\"u}stle, and L.~Hille.
\newblock Cluster-cyclic quivers with three vertices and the markov equation:
  With an appendix by otto kerner.
\newblock {\em Algebras and representation theory}, 14:97--112, 2011.

\bibitem[FST12a]{FST12b}
A.~Felikson, M.~Shapiro, and P.~Tumarkin.
\newblock Cluster algebras of finite mutation type via unfoldings.
\newblock {\em International Mathematics Research Notices}, 2012(8):1768--1804,
  2012.

\bibitem[FST12b]{FST12a}
A.~Felikson, M.~Shapiro, and P.~Tumarkin.
\newblock Skew-symmetric cluster algebras of finite mutation type.
\newblock {\em Journal of the European Mathematical Society}, 14(4):1135--1180,
  2012.

\bibitem[FT19]{FT19}
A.~Felikson and P.~Tumarkin.
\newblock Geometry of mutation classes of rank 3 quivers.
\newblock {\em Arnold Mathematical Journal}, 5:37--55, 2019.

\bibitem[FT23]{FT23}
A.~Felikson and P.~Tumarkin.
\newblock Mutation-finite quivers with real weights.
\newblock {\em Forum of Mathematics Sigma}, 11:e9, 2023.

\bibitem[FZ02]{FZ02}
S.~Fomin and A.~Zelevinsky.
\newblock {Cluster algebras I: foundations}.
\newblock {\em J.~Amer.~Math.~Soc.}, 15(2):497--529, 2002.

\bibitem[FZ03]{FZ03a}
S.~Fomin and A.~Zelevinsky.
\newblock {Cluster algebras II: Finite type classification}.
\newblock {\em Invent.~Math.}, 154:63--121, 2003.

\bibitem[HM03]{HM03}
D.~Handscomb and J.~Marer.
\newblock {\em Chebyshev polynomials}.
\newblock CRC Press, 2003.

\bibitem[Lam16]{Lam16}
P.~Lampe.
\newblock Diophantine equations via cluster transformations.
\newblock {\em Journal of Algebra}, 462:320--337, 2016.

\bibitem[Nak24]{Nak24}
T.~Nakanishi.
\newblock Local and global patterns of rank 3$ g $-fans of totally-infinite
  type.
\newblock {\em arXiv preprint arXiv:2411.16283}, 2024.

\bibitem[Sev12]{Sev12}
A.~Seven.
\newblock {\em Mutation classes of 3×3 generalized Cartan matrices}, volume
  295 of {\em Progr. Math.}
\newblock 2012.

\bibitem[Sev13]{Sev13}
A.~Seven.
\newblock Mutation classes of skew-symmetrizable 3×3 matrices.
\newblock {\em Proceedings of the American Mathematical Society},
  141(5):1493--1504, 2013.

\end{thebibliography}
\end{document}